\documentclass[11pt,twoside]{amsart}
\usepackage{amsfonts}
\usepackage{amsmath}
\usepackage{amssymb}
\usepackage[latin1]{inputenc} 
\usepackage{mathrsfs}
\usepackage{color}
\newtheorem {thm}{Theorem}
\newtheorem {pro}{Proposition}
\newtheorem {cor}{Corollary}
\newtheorem {lem}{Lemma}
\newtheorem {defn}{Definition}

\newtheorem {rem}{\it Remark}

\newtheorem {exe}{\it Example}

\numberwithin{equation}{section}

\def\Supp{\mathop{\rm Supp}}
\newcommand{\cal}       {\mathcal}
\textheight 23.5cm                                            
\textwidth16.4cm                                              
\oddsidemargin 0cm \evensidemargin 0cm \voffset-1.64cm
\setlength{\oddsidemargin}{0.in} \setlength{\evensidemargin}{0.in} 
\newenvironment{proof of}{\noindent}{\it Proofof}

\def\finpr{\hfill \hbox{
\vrule height 1.453ex  width 0.093ex  depth 0ex \vrule height
1.5ex  width 1.3ex  depth -1.407ex\kern-0.1ex \vrule height
1.453ex  width 0.093ex  depth 0ex\kern-1.35ex \vrule height
0.093ex  width 1.3ex  depth 0ex}}
\newenvironment{proofof}{{\noindent {\it Proof of }}}{\hfill \finpr \\ }
\def\cb{{\Bbb C}}
\def\rb{{\Bbb R}}

                    \def\b{\beta}
                      
          \def\L{\Lambda}         \def\l{\lambda}
\def\nb{\mathbb N}       \def\ds{\displaystyle}  
           \let\L=\longrightarrow  
\def\v{\varphi}                  \let\l=\rightarrow
               
                 \def\ds{\displaystyle}

\def\1{1\!\rm l}

\textheight 23.5cm                                            
\textwidth16.2cm                                              
\oddsidemargin 0cm \evensidemargin 0cm \voffset-1.64cm
\setlength{\oddsidemargin}{0.in} \setlength{\evensidemargin}{0.in} 
\title{Complex Hessian Operator associated to an $m$-positive closed current and weighted $m$-capacity}
\author[ Hadhami Elaini and Fredj Elkhadhra]{ Hadhami Elaini and Fredj Elkhadhra}
\address{ University of Sousse\\ Higher School of Sciences and Technology of Hammam Sousse\\ MaPSFA (LR 11 ES 35)\\ 4011 Hammam Sousse\\ Tunisia.}
\email{elaynihadhami@gmail.com, fredj.elkhadhra@essths.u-sousse.tn}
\subjclass[Mathematics Subject Classification.]{32W50, 32C30, 31A15}
\keywords{$m$-subharmonic functions, positive currents, hessian operator,  capacity}
\begin{document}
\begin{abstract} In this paper, we first study the definition and the continuity of the complex Hessian operator associated to an $m$-positive closed current $T$, for some classes of unbounded $m$-subharmonic functions as well as when we consider a regularization sequence of $T$. Next, we introduce the notion of weighted $(m,T)$-capacity in the complex Hessian setting and we investigate the link with the weighted $m$-extremal function. As an application we give a characterization of the Cegrell classes ${\mathscr F}^m$ and ${\mathscr E}^m$ by means of the weighted $(m,1)$-capacity. Furthermore, we prove a subsolution theorem for a general complex Hessian equation relatively to $T$.
\end{abstract} \maketitle
\tableofcontents
\section{Introduction}
Let $\Omega$ be a bounded open subset of $\cb^n$ and $m$ an integer such that $1\leqslant m\leqslant n$. Denote by ${\cal {SH}}_m(\Omega)$ the class of $m$-subharmonic ($m$-sh) functions on $\Omega$. In the borders cases $m=1$ and $m=n$, correspond to subharmonic functions and plurisubharmonic (psh) functions respectively. In this paper, we consider the notion of $ m$-positivity of forms and currents introduced firstly by Blocki \cite{Bl} and next by Lu \cite{lu article} and by Dhouib-Elkhadhra \cite{ref9}. Denote by ${\mathscr C}_p^m(\Omega)$ the convex cone of $m$-positive closed currents of bidegree $(p,p)$ on $\Omega$ (see Definition 1), ${\rm Supp}T$ is the support of a given $m$-positive current $T$ and $\beta=dd^c|z|^2=\frac{i}{\pi}\partial\overline\partial|z|^2$. Beside the introduction the paper has four sections. In Section 2, we recall some basic notations and definitions necessarily for the rest of this paper. Section 3 is reserved to the study of  the definition and the continuity of the complex  Hessian operator $(dd^c.)^{m-p}\wedge\beta^{n-m}\wedge T$.
Recall that the definition and the continuity of such operator have been studied intensively in the past ten years. In 2012, Lu \cite{ref13} and Sadullaev-Abdullaev \cite{ref17} have proved the continuity of the complex Hessian operator for decreasing sequences of locally bounded $m$-sh functions. Recently, in 2016, Dhouib-Elkhadhra \cite{ref9} and Wan-Wang \cite{ref19} have obtained the same result for $m$-sh functions which are bounded near $\partial\Omega\cap\Supp T$ as well as for $m$-sh functions which their unbounded locus have a small Hausdorff measure. Building on a very recently work of \cite{ref1} on the complex Monge-Amp\`ere operator, we improve the later result, by assuming that one of the considered $m$-sh functions is integrable with respect to the trace measure of $T$. Next, by letting ourselves be inspired by a technics go back to Ben Messaoud-El Mir \cite{me-el} in the complex setting, we investigate the continuity of the operator $(dd^{c}u_{1}^{j}\wedge...\wedge dd^{c}u_{q}^{j}\wedge\beta^{n-m}\wedge T_{j})_{j}$ where $T_{j}=T\star\chi_{j}$ is the standard regularization by convolution of $T$ and $(u_{k}^{j})_j$ are sequences of monotone decreasing $m$-sh functions not necessarily bounded. In the remaining of Section 3, we study how we can replace the condition of  monotone increasing sequence of $m$-sh functions by a kind of convergence in capacity. Roughly speaking, until the work of Xing \cite{ref21}, convergence in capacity appears as an effective tools in studying continuity of complex Monge-Amp\`ere operator and complex Hessian operator. In fact, the monotonic decreasing condition can be relaxed to a convergence in the sense of capacity (see \cite{ref21}, \cite{ref9}). We finish Section 3, by showing that any increasing sequence of $m$-sh functions, converges in the sense of the capacity $cap_{m-1,T}$ (see Subsection 3.2), provided that the starting function is integrable with respect to the trace measure of $T$. As a consequence, we deduce that  the complex  Hessian operator $(dd^c.)^{m-p}\wedge\beta^{n-m}\wedge T,$ is continuous on locally uniformly bounded sequences of $m$-sh functions which are monotonically increasing a way from a vanishing $cap_{m,T}$ subset, where $cap_{m,T}$ is the $(m,T)$-capacity introduced by \cite{ref9}. We recover then a result of Bedford-Taylor \cite{ref2} (see also  \cite{ref6}) for the complex case $m=n$ and the trivial current $T=1$. In Section 4, starting from a given negative $m$-sh function $u$ and an $m$-positive closed current $T$, we introduce  the $(m,T)$-capacity with weight $u$ denoted by $cap_{m,T,u}$ which generalize the $(m,T)$-capacity $cap_{m,T}$ for the constant weight $u=-1$. Especially when $T=1$, Nguyen V.T \cite{Van Nguyen} confirms that the weighted $m$-capacity serves in the characterization of ${\mathscr E}^m(\Omega)$ the generalized Cegrell class introduced and studied by \cite{ref13}. Among other interesting related properties, if $u\in{\mathscr E}^m(\Omega)$ and $cap_{m,u}=cap_{m,1,u}$ then we prove that there is a strong link between the weighted $(m,1)$-capacity and the weighted $m$-extremal function $h_{m,E,u}^*$ (see Definition 3). Namely, we establish the following estimates: $cap_{m,u}^\ast\left({\ds\mathop  E^\circ}\right)\leqslant (dd^ch_{m,E,u}^*)^m\wedge\beta^{n-m}(\overline E)\leqslant cap_{m,u}^\ast(\overline E)$ for any set $E\Subset\Omega$, where $cap_{m,u}^\ast$ is the outer capacity associated to $cap_{m,u}$. In particular, we recover the estimates obtained by \cite{ref8} for the border case $m=n$. Furthermore, we show how $m$-weighted capacity can be used to  characterize the interesting Cegrell subclass ${\mathscr F}^m(\Omega)\subset{\mathscr E}^m(\Omega)$ and which also provides several examples of functions in this subclass. Next, similarly as in \cite{ref18}, if $\Omega$ is $m$-strongly pseudoconvex, we present the Sadullaev weighted $m$-capacity by means of the defining function of $\Omega$ and we close Section 4 by establishing that such capacity can be compared with $cap_{m,u}^\ast$ for adequate conditions on the weight. The results of Section 4 generalize the one obtained by \cite{ref15} for the complex setting as well as the comparison of capacities proved by \cite{ref17} when the weight is the constant $m$-sh function $-1$. Finally, in Section 5 we deal with the solution of the following generalized complex Hessian equation $(dd^c.)^{m-p}\wedge\beta^{n-m}\wedge T=\mu$, for a given current $T\in{\mathscr C}_p^m(\Omega)$ and a positive measure $\mu$. It should be mention here that especially in the case $T=1$, resolution of the complex Hessian equation is crucial in the development of the complex Hessian theory. By an adaptation of a technics due to Xing \cite{ref21} and by using some properties from the potential theoretic aspect of $m$-sh functions like the generalized Xing-type comparison principle inequality, we obtain a subsolution result for the above general equation. More precisely, we give sufficient conditions on the singularities of $T$ guaranteeing that a subsolution of the generalized complex Hessian equation yields in fat a solution. We extend then the main work of Xing \cite{ref21} for the trivial current $T=1$ and the complex setting $m=n$.
\section{Preliminaries}
 In this section, we recall some elements of complex Hessian theory that will be used throughout this paper. According to Blocki \cite{Bl}, a real $(1,1)$-form $\alpha$ is said $m$-positive on $\Omega$ if at every point of $\Omega$ we have $\alpha^j\wedge\beta^{n-j}\geqslant 0$, $j=1,...,m$. The following lemma of Blocki \cite{Bl} is crucial in this concept of $m$-positivity:
\begin{lem} Let $1\leqslant p\leqslant m$. If $\alpha_{1},...,\alpha_p$ are $m$-positive $(1,1)$-forms then $\alpha_1\wedge...\wedge\alpha_p\wedge\b^{n-m}\geqslant 0$.
\end{lem}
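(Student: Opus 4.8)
The plan is to reduce the statement to a pointwise linear–algebra inequality and then quote G{\aa}rding's theory of hyperbolic polynomials. Positivity is a pointwise notion, so I fix a point of $\Omega$ and identify each real $(1,1)$-form $\alpha$ with its Hermitian matrix $A$ relative to $\beta$ (which is a positive multiple of the standard K{\"a}hler form, so positive normalizing constants are irrelevant). Using the identity $\alpha^j\wedge\beta^{n-j}=\binom nj^{-1}\sigma_j(\lambda)\,\beta^n$, where $\lambda=(\lambda_1,\dots,\lambda_n)$ are the eigenvalues of $A$ and $\sigma_j$ is the $j$-th elementary symmetric function, the hypothesis that $\alpha$ is $m$-positive translates into $\sigma_1(\lambda)\geqslant 0,\dots,\sigma_m(\lambda)\geqslant 0$, i.e. $\lambda\in\overline{\Gamma_m}$ where $\Gamma_m=\{A:\sigma_1(A)>0,\dots,\sigma_m(A)>0\}$ is the G{\aa}rding cone of $\sigma_m$.

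\textbf{Step 1 (reduction to $p=m$).} A $(q,q)$-form with $q=n-m+p$ is $\geqslant 0$ precisely when its wedge product with every product of $m-p$ rank-one positive forms $\mu_k:=i\gamma_k\wedge\bar\gamma_k$ is a nonnegative volume form. Each $\mu_k$ has a single nonzero eigenvalue, which is $\geqslant 0$, and satisfies $\mu_k^2=0$; hence $\sigma_j(\mu_k)\geqslant 0$ for all $j$ and $\mu_k$ is itself $m$-positive. Therefore $\alpha_1\wedge\cdots\wedge\alpha_p\wedge\beta^{n-m}\geqslant 0$ will follow once we know $\alpha_1\wedge\cdots\wedge\alpha_p\wedge\mu_1\wedge\cdots\wedge\mu_{m-p}\wedge\beta^{n-m}\geqslant 0$, i.e. once the lemma is proved with exactly $m$ many $m$-positive $(1,1)$-factors. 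So it suffices to show: if $\eta_1,\dots,\eta_m$ are $m$-positive, then $\eta_1\wedge\cdots\wedge\eta_m\wedge\beta^{n-m}\geqslant 0$ as a top form.

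\textbf{Step 2 (polarization and G{\aa}rding).} The assignment $(\eta_1,\dots,\eta_m)\mapsto (\eta_1\wedge\cdots\wedge\eta_m\wedge\beta^{n-m})/\beta^n$ is symmetric and $m$-linear, and on the diagonal it equals $\eta\mapsto\binom nm^{-1}\sigma_m(\lambda(A_\eta))$; hence it is $\binom nm^{-1}$ times the full polarization $M$ of the polynomial $P(A):=\sigma_m(\lambda(A))=S_m(A)$ (the sum of the principal $m\times m$ minors) on the space of Hermitian matrices. This polynomial is hyperbolic with respect to $I$ (for Hermitian $A$, $t\mapsto P(A+tI)=\sigma_m(\lambda(A)+t\mathbf 1)$ has only real roots), and its G{\aa}rding cone, the connected component of $\{P\neq 0\}$ through $I$, is exactly $\Gamma_m$, so $\overline{\Gamma_m}$ is the set of Hermitian matrices of $m$-positive $(1,1)$-forms. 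By G{\aa}rding's inequality, $\Gamma_m$ is convex and $M(A_1,\dots,A_m)\geqslant 0$ for all $A_1,\dots,A_m\in\overline{\Gamma_m}$. Applying this to the matrices of $\eta_1,\dots,\eta_m$ gives $\eta_1\wedge\cdots\wedge\eta_m\wedge\beta^{n-m}=\binom nm^{-1}M(A_{\eta_1},\dots,A_{\eta_m})\,\beta^n\geqslant 0$, and Step 1 finishes the proof.

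\textbf{Main obstacle.} The only substantive input is G{\aa}rding's inequality together with the identification of its cone with $\{\sigma_1\geqslant 0,\dots,\sigma_m\geqslant 0\}$; the computation of the constants and the reduction in Step 1 are routine. If one prefers an elementary argument one can instead induct, peeling off one non-$\beta$ factor at a time and using that $m$-positivity is preserved under the relevant unitary restrictions, but the hyperbolic-polynomial route is by far the shortest.
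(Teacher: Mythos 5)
Your proof is correct; the paper itself gives no argument for this lemma but simply cites Blocki, and your route (pointwise reduction to the polarized form of $\sigma_m$ on the closed G{\aa}rding cone $\overline{\Gamma_m}=\{\sigma_1\geqslant 0,\dots,\sigma_m\geqslant 0\}$, after peeling the statement down to $p=m$ by wedging with rank-one positive forms, which are themselves $m$-positive) is essentially the standard proof from that cited source. Both the normalization $\alpha^m\wedge\beta^{n-m}=\binom{n}{m}^{-1}\sigma_m(\lambda)\beta^n$ and the identification of the $m$-positivity condition with membership in $\overline{\Gamma_m}$ check out, so nothing further is needed.
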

 According to \cite{ref9}, we have:
\begin{defn} A current $T$ of bidegree $(p,p)$ on $\Omega$ such that $p\leqslant m\leqslant n$ is said $m$-positive if for any $m$-positive $(1,1)$-forms $\alpha_1,...,\alpha_{m-p},$ we have
$\alpha_1\wedge...\wedge\alpha_{m-p}\wedge\beta^{n-m}\wedge T\geqslant 0.$
\end{defn}
We should be noted here that in Definition 1 we can consider $m$-positive forms with constant coefficients.  Indeed, if $\alpha=\beta^{n-m}\wedge\alpha_1\wedge...\wedge\alpha_{m-p},$ where $\alpha_1,...,\alpha_{m-p}$ are general $m$-positive $(1,1)$-forms, then, for all $z_0\in \Omega$ there exists $r>0$ such that $\alpha(z)=\alpha(z_0)+ O(|z-z_0|)$ on the euclidean ball $\mathscr{B}(z_0,r)$. For $\varepsilon>0$, we have $$\int_{\mathscr{B}(z_0,r)}T\wedge(\alpha+\varepsilon\beta^{n-p})=\int_{\mathscr{B}(z_0,r)}T\wedge \alpha(z_0)+\varepsilon\int_{\mathscr{B}(z_0,r)}T\wedge\beta^{n-p}+\int_{\mathscr{B}(z_0,r)}T\wedge O(|z-z_0|).$$  By positivity of the current $T\wedge\beta^{n-m}$, if $T\wedge\beta^{n-p}=0$, then $T\wedge\beta^{n-m}=0$ and therefore $T\wedge\alpha=0$. If $T\wedge\beta^{n-p}>0$, then since the last integral is sufficiently small when $r\ll 1$, we see that $\int_{\mathscr{B}(z_0,r)}T\wedge(\alpha+\varepsilon\beta^{n-p})\geqslant 0$  and therefore we deduce that $T\wedge\alpha\geqslant 0$ by letting $\varepsilon\l 0$. Now, we recall briefly the notion of $m$-subharmonic ($m$-sh) functions. A function $u: \Omega\rightarrow \rb$ is said $m$-sh if it is subharmonic
and if  $dd^{c}u\wedge\alpha_{1}\wedge...\wedge\alpha_{m-1}\wedge \beta^{n-m}\geqslant 0 $ for every $ \alpha_{1},...,\alpha_{m-1}$ $m$-positive $(1,1)$-forms. For convenience, we will denote by ${\cal {SH}}_{m}(\Omega)$ the set of $m$-sh functions on $\Omega$. In the following, we list the most basic properties of ${\cal {SH}}_{m}(\Omega)$ that we shall need later on.
\begin{pro} \
\begin{enumerate}
\item If $u$ is of class $C^2$ then $u\in{\cal {SH}}_m(\Omega)$ if and only if $dd^cu$ is $m$-positive on $\Omega$.
\item ${\cal {PSH}}(\Omega)={\cal {SH}}_n(\Omega)\subset{\cal {SH}}_{n-1}(\Omega)\subset\cdots\subset{\cal {SH}}_1(\Omega)={\cal {SH}}(\Omega):=\{u,\ {\rm subharmonic\ on}\ \Omega\}$.
\item If $u\in{\cal {SH}}_m(\Omega)$, then the standard regularization $u_j=u\star\chi_j\in{\cal {SH}}_m(\Omega_j)\cap{\mathscr C}^\infty(\Omega_j)$, where $\Omega_j=\{x\in\Omega:\ d(x,\partial\Omega)>1/j\}$. Moreover, $(u_j)_j$ decreases pointwise to $u$.
\item Let $u,v\in{\cal {SH}}_m(\Omega)$ then $\max(u,v)\in{\cal {SH}}_m(\Omega)$.
\item If $(u_\alpha)_\alpha\Subset{\cal {SH}}_m(\Omega)$, $u=\sup_\alpha u_\alpha<+\infty$ and $u$ is upper semicontinuous then $u$ is $m$-sh.
\end{enumerate}
\end{pro}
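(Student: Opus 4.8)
The plan is to derive (1)--(5) from the pointwise linear algebra underlying $m$-positivity together with standard approximation arguments, the single recurring tool being the following remark: \emph{if $w_j\in{\cal {SH}}_m(\Omega)$ decreases to some $w\not\equiv-\infty$, or if the $w_j\in{\cal {SH}}_m(\Omega)$ are locally bounded above and increase to a function whose upper semicontinuous regularization is $w$, then $w_j\to w$ in $L^1_{loc}(\Omega)$, hence $dd^cw_j\wedge\alpha_1\wedge\cdots\wedge\alpha_{m-1}\wedge\beta^{n-m}\to dd^cw\wedge\alpha_1\wedge\cdots\wedge\alpha_{m-1}\wedge\beta^{n-m}$ weakly for every constant-coefficient $m$-positive $(1,1)$-form $\alpha_i$, positivity passes to the weak limit, and $w$ is subharmonic and u.s.c. by classical potential theory; so $w\in{\cal {SH}}_m(\Omega)$}. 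Recall from the remark following Definition 1 that testing against constant-coefficient $m$-positive forms suffices, which is what makes the convolution and weak-limit manipulations legitimate.

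For (1), at a fixed $z_0$ one diagonalizes the Hermitian form $dd^cu(z_0)$ by a unitary change of coordinates, with eigenvalues $\lambda=(\lambda_1,\dots,\lambda_n)$. Evaluating $dd^cu\wedge\alpha_1\wedge\cdots\wedge\alpha_{m-1}\wedge\beta^{n-m}$ at $z_0$ against all constant-coefficient $m$-positive $\alpha_i$, and on the other hand expressing $(dd^cu)^j\wedge\beta^{n-j}$ through the elementary symmetric polynomials $\sigma_j(\lambda)$, the equivalence ``$u$ is $m$-sh near $z_0$ in the $C^2$ sense $\iff$ $dd^cu(z_0)$ is $m$-positive'' reduces to G{\aa}rding's hyperbolic-polynomial theory for the cone $\{\sigma_1\geqslant0,\dots,\sigma_m\geqslant0\}$; this is precisely Blocki's contribution, which I would invoke rather than reprove. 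I expect this to be the only genuine obstacle: once it is in hand, the remaining properties are formal.

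Granting (1), property (3) is immediate, since $dd^c$ and exterior multiplication by constant forms commute with convolution and $\chi_j\geqslant0$, so $dd^cu_j\wedge\alpha_1\wedge\cdots\wedge\alpha_{m-1}\wedge\beta^{n-m}=(dd^cu\wedge\alpha_1\wedge\cdots\wedge\alpha_{m-1}\wedge\beta^{n-m})\star\chi_j\geqslant0$ and likewise for subharmonicity, while $u_j\downarrow u$ is the classical property of mollification by radial kernels. For (2): taking all $\alpha_i=\beta$ shows $\beta$ is $k$-positive for every $k$, so for a $(1,1)$-form $\alpha$ the condition of $n$-positivity reads, after diagonalization, $\sigma_j(\lambda)\geqslant0$ for $j=1,\dots,n$; as $\prod_i(t+\lambda_i)$ then has nonnegative coefficients and real roots, all $\lambda_i\geqslant0$, whence ${\cal {SH}}_n(\Omega)={\cal {PSH}}(\Omega)$. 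An $m$-positive $(1,1)$-form is trivially $(m-1)$-positive, so for $C^2$ functions (1) and Lemma 1 (applied with the integer $m-1$ to the $m-1$ forms $dd^cu,\alpha_1,\dots,\alpha_{m-2}$) give ${\cal {SH}}_m\subset{\cal {SH}}_{m-1}$; the general inclusion follows by regularizing via (3) and using the recurring tool, yielding the whole chain.

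For (4), regularize $u,v$ by decreasing smooth sequences $u_j,v_j$, observe $\max(u_j,v_j)\downarrow\max(u,v)$, and reduce to the smooth case; there, with $\max_\varepsilon$ a smooth, convex, non-decreasing-in-each-variable regularized maximum, the chain rule gives $dd^c\max_\varepsilon(u,v)=(\partial_s\max_\varepsilon)\,dd^cu+(\partial_t\max_\varepsilon)\,dd^cv+H$, where $H$ is the pullback of the positive semidefinite Hessian of $\max_\varepsilon$, hence a positive, a fortiori $m$-positive, $(1,1)$-form; being a nonnegative combination of $m$-positive forms, $dd^c\max_\varepsilon(u,v)$ is $m$-positive, so $\max_\varepsilon(u,v)\in{\cal {SH}}_m(\Omega)$ by (1), and letting $\varepsilon\downarrow0$ and then removing the regularization of $u,v$ with the recurring tool gives $\max(u,v)\in{\cal {SH}}_m(\Omega)$. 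Finally, for (5), Choquet's lemma furnishes a countable subfamily $(u_{\alpha_k})_k$ with $(\sup_k u_{\alpha_k})^{*}=(\sup_\alpha u_\alpha)^{*}=u$ (the last equality because $u$ is already upper semicontinuous); the functions $w_N=\max(u_{\alpha_1},\dots,u_{\alpha_N})$ lie in ${\cal {SH}}_m(\Omega)$ by (4), are locally bounded above, and increase to a function equal to $u$ off a negligible set, so $w_N\to u$ in $L^1_{loc}(\Omega)$, and the increasing form of the recurring tool gives $u\in{\cal {SH}}_m(\Omega)$.
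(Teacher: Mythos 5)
Your proposal is correct, but note that the paper does not actually prove Proposition~1: it is stated as a list of recalled facts, with the burden carried by the cited references (Blocki, Lu's thesis, Dhouib--Elkhadhra), exactly the sources you invoke for the one genuinely nontrivial point, namely the G{\aa}rding-cone characterization in (1). So there is no ``paper proof'' to diverge from; your reconstruction follows the standard route of that literature. Two small points are worth making explicit. First, in (4) you use that a nonnegative combination of $m$-positive $(1,1)$-forms (plus a positive semidefinite one) is again $m$-positive; this is the convexity of the closed G{\aa}rding cone $\overline{\Gamma}_m$ and of course the inclusion $\overline{\Gamma}_n\subset\overline{\Gamma}_m$, which is part of the same package as (1) but should be named, since the defining inequalities $\sigma_j\geqslant 0$ are not termwise additive. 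Second, your recurring tool only yields positivity of $dd^cw\wedge\alpha_1\wedge\cdots\wedge\alpha_{m-1}\wedge\beta^{n-m}$ for \emph{constant-coefficient} $m$-positive $\alpha_i$, so the reduction in the remark after Definition~1 (positivity against constant-coefficient test forms plus positivity of the trace implies positivity against general $m$-positive forms) is genuinely needed, and you correctly cite it; just make sure you apply it to the current $dd^cw$, whose trace $dd^cw\wedge\beta^{n-1}$ is positive because $w$ is subharmonic. With these two remarks supplied, each of (1)--(5) goes through as you describe.
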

Assume that $T\in {\mathscr C}_{p}^{m}(\Omega)$ and $u_{1},...,u_{k}\  (k\leqslant m-p) $ are $m$-sh locally bounded functions  on $ \Omega$. Then the complex Hessian operator can be defined inductively by setting:  $$dd^{c}u_{1}\wedge...\wedge dd^{c}u_{k}\wedge \beta^{n-m} \wedge T=dd^{c}(u_{1} dd^{c}u_{2}\wedge...\wedge dd^{c}u_{k}\wedge\beta^{n-m}\wedge T) $$  which is a closed and $m$-positive current in the sense of Lu. Based on Definition 1 and by rewriting the proof of Lu \cite{ref13}, we get
\begin{pro}
Let $ T\in{\mathscr C}_p^m(\Omega)$, $ u_{0},...,u_{k}\in {\cal {SH}}_{m}(\Omega) \cap L_{loc}^{\infty}(\Omega),\ k\leqslant m-p$ and $ u_{0}^{j},...,u_{k}^{j} $ are  decreasing sequences of  $m$-sh functions which converge respectively to $ u_{0},...,u_{k}$. Then, we have:
\begin{enumerate}
\item $ u_{0}^{j}dd^{c}u_{1}^{j} \wedge...\wedge dd^{c}u_{k}^{j}\wedge\beta^{n-m}\wedge T\L u_{0}dd^{c}u_{1} \wedge...\wedge dd^{c}u_{k}\wedge\beta^{n-m}\wedge T$ weakly on $\Omega$.
\item $dd^{c}u_{0}^{j} \wedge...\wedge dd^{c}u_{k}^{j}\wedge\beta^{n-m}\wedge T \L dd^{c}u_{0}\wedge...\wedge dd^{c}u_{k}\wedge\beta^{n-m}\wedge T$ weakly on $\Omega$, $k<m-p$.
\end{enumerate}
\end{pro}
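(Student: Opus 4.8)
\noindent\emph{Proof strategy.} The plan is to follow Lu's proof of the locally bounded convergence theorem (the case $T=1$), running an induction on $k$ and carrying the extra factor $\beta^{n-m}\wedge T$ along at every step. Two structural features keep $T$ harmless. First, $T$ is closed, so every integration by parts used in Lu's argument stays valid after wedging with $\beta^{n-m}\wedge T$. Second, by Lemma 1 and Definition 1, whenever $v_1,\dots,v_\ell\in{\cal{SH}}_m(\Omega)\cap L^\infty_{loc}(\Omega)$ with $\ell\leqslant m-p$, the current $dd^cv_1\wedge\cdots\wedge dd^cv_\ell\wedge\beta^{n-m}\wedge T$ is again $m$-positive and closed; in particular its trace is a positive Radon measure and the Chern--Levine--Nirenberg type estimates of \cite{ref9} apply, giving local uniform mass bounds $\sup_j\|dd^cu_1^j\wedge\cdots\wedge dd^cu_\ell^j\wedge\beta^{n-m}\wedge T\|_{\Omega'}<\infty$ on each $\Omega'\Subset\Omega$.

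\noindent\emph{Induction scheme.} For $k=0$, statement (1) reads $u_0^j\,\beta^{n-m}\wedge T\to u_0\,\beta^{n-m}\wedge T$, which is dominated convergence against the positive Radon measure $\operatorname{tr}(\beta^{n-m}\wedge T)$ since $u_0^j\downarrow u_0$ is locally uniformly bounded. Next, statement (1) at level $k$ implies statement (2) at level $k$ (when $k<m-p$): apply the weakly continuous operator $dd^c$ to $u_0^j\,dd^cu_1^j\wedge\cdots\wedge dd^cu_k^j\wedge\beta^{n-m}\wedge T$ and use $dd^c(v\,S)=dd^cv\wedge S$ from the inductive definition of the operator. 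It remains to prove (1) at level $k$ assuming (1) and (2) hold up to level $k-1$. Applying (2) at level $k-1$ to the functions $u_1^j,\dots,u_k^j$ yields
\[
\Theta_j:=dd^cu_1^j\wedge\cdots\wedge dd^cu_k^j\wedge\beta^{n-m}\wedge T\;\longrightarrow\;\Theta:=dd^cu_1\wedge\cdots\wedge dd^cu_k\wedge\beta^{n-m}\wedge T
\]
weakly, together with the analogous convergences for the sub-products of the $u_i^j$; one is thus reduced to proving $u_0^j\Theta_j\to u_0\Theta$ weakly.

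\noindent\emph{The core estimate.} Fix a test function $\chi\geqslant 0$. For the upper bound, fix $N$; since $u_0^j\leqslant u_0^N$ for $j\geqslant N$ we get $\int\chi u_0^j\Theta_j\leqslant\int\chi g\,\Theta_j$ for every continuous $g\geqslant u_0^N$; letting $j\to\infty$ (weak convergence tested against the continuous compactly supported $\chi g$, legitimate by the uniform mass bound), then taking the infimum over such $g$ (outer regularity of $\operatorname{tr}\Theta$ together with upper semicontinuity of $u_0^N$), and finally $N\to\infty$ by monotone convergence, one obtains $\limsup_j\int\chi u_0^j\Theta_j\leqslant\int\chi u_0\Theta$. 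The lower bound is the delicate half and is where closedness of $T$ enters: writing $\Theta_j=dd^cu_1^j\wedge R_j$ with $R_j=dd^cu_2^j\wedge\cdots\wedge dd^cu_k^j\wedge\beta^{n-m}\wedge T$ (closed and $m$-positive by Definition 1), a standard sequence of integrations by parts against $R_j$ rewrites $\int\chi\,u_0^j\,dd^cu_1^j\wedge R_j$ as a combination of $\int u_0^ju_1^j\,dd^c\chi\wedge R_j$, the two gradient cross terms $\int u_i^j\,d\chi\wedge d^cu_\ell^j\wedge R_j$, and a symmetric Hessian term with the roles of $u_0^j,u_1^j$ exchanged; the first converges since $u_0^ju_1^j\to u_0u_1$ in $L^1_{loc}(\operatorname{tr}R)$ for bounded monotone sequences, while the remaining terms are of a shape already controlled by the induction hypothesis applied to $R_j$. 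Combining the two bounds gives $u_0^j\Theta_j\to u_0\Theta$, which closes the induction.

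\noindent\emph{Main obstacle.} The delicate point is precisely the lower bound above: one must organize each integration by parts so that every intermediate current still carries the factor $\beta^{n-m}\wedge T$ and the correct number of $m$-subharmonic potentials, so that Lemma 1, Definition 1 and the Chern--Levine--Nirenberg estimates of \cite{ref9} continue to apply and all pairings remain legitimate. With $T$ closed and $m$-positive this bookkeeping is formally identical to Lu's in the case $T=1$, which is why the statement transfers with no new analytic input.
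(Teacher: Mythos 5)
Your overall plan is exactly what the paper intends: the paper offers no proof of Proposition 2 beyond the remark that it follows ``based on Definition 1 and by rewriting the proof of Lu \cite{ref13}'', and your two structural observations --- that closedness of $T$ preserves every Stokes/integration-by-parts step, and that Definition 1 together with Lemma 1 makes each intermediate current $dd^cv_1\wedge\cdots\wedge dd^cv_\ell\wedge\beta^{n-m}\wedge T$ a closed $m$-positive current to which the Chern--Levine--Nirenberg estimates of \cite{ref9} apply --- are precisely the points that make the transfer from $T=1$ routine. The base case, the implication $(1)\Rightarrow(2)$ via weak continuity of $dd^c$, and the upper bound $\limsup_j\int\chi u_0^j\Theta_j\leqslant\int\chi u_0\Theta$ via upper semicontinuity and monotonicity are all correct.

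The gap is in your lower bound. Integrating by parts turns $\int\chi\, u_0^j\,dd^cu_1^j\wedge R_j$ into $\int\chi\, u_1^j\,dd^cu_0^j\wedge R_j$ plus the $dd^c\chi$ and gradient cross terms; but this ``symmetric Hessian term'' has exactly the same form as the quantity you are trying to control (same number of potentials and of $dd^c$'s, only the roles of $u_0^j$ and $u_1^j$ exchanged), so it is not ``of a shape already controlled by the induction hypothesis applied to $R_j$''. As written, the identity merely expresses the unknown limit $I_j$ in terms of an equally unknown $I_j'$ and convergent remainders, and no limit is pinned down. The standard way to close this step --- the one Lu, following Bedford--Taylor and Demailly, actually uses, and which the present paper itself reproduces in its Lemma 2 and Theorem 1 for the unbounded case --- is a total-mass argument: localize to a ball ${\mathscr B}_{z_0}(r)$, replace each $u_i^j$ near $\partial{\mathscr B}_{z_0}(r)\cap\Supp T$ by $A(|z-z_0|^2-r^2)$, and use Stokes (legitimate because $T$ is closed) to compare $\int_{{\mathscr B}_{z_0}(r)}u_0^j\Theta_j\wedge\beta^{m-p-k}$ with $\int_{{\mathscr B}_{z_0}(r)}u_0\Theta\wedge\beta^{m-p-k}$; combined with the one-sided inequality $\Theta'\leqslant u_0\Theta$ coming from your upper bound, equality of total masses forces $\Theta'=u_0\Theta$. (Alternatively one may invoke quasicontinuity with respect to $cap_{m,T}$, established in \cite{ref9}.) With the lower bound reorganized along these lines your argument becomes the paper's intended one; as it stands, that step does not close.
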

Analogously to the famous Bedford-Taylor capacity \cite{ref2}, Dhouib-Elkhadhra \cite{ref9} have associated to any current $T\in{\mathscr C}_p^m(\Omega)$ the $(m,T)$-capacity of a compact $K\subset\Omega$  by setting:
\vskip0.1cm
  $cap_{m,T}(K,\Omega)=cap_{m,T}(K)=\sup\{\int_{K}(dd^{c}u)^{m-p}\wedge\beta^{n-m}\wedge T,\  u\in {\cal {SH}}_{m}(\Omega) \ and\  -1\leqslant u\leqslant 0 \} .$
\vskip0.1cm For all $E\subset\Omega$,  we have  $ cap_{m,T}(E,\Omega)=\sup \left\lbrace cap_{m,T}(K,\Omega)\ with \  K\  \emph{is a compact of }E   \right\rbrace .$
\vskip0.15cm
 When $T=1$, we recover the $m$-capacity defined by Lu \cite{ref13}. It was proved by \cite{ref9} that every  bounded $m$-sh function is quasicontinuous with respect to the $(m,T)$-capacity. A set $ A\subset\Omega $ is called $(m,T)$-pluripolar if $ cap_{m,T}(A,\Omega)=0$ and a property is said to be true almost everywhere in $(m,T)$-capacity ($cap_{m,T}$-a.e) if it is satisfied outside a $ (m, T)$-pluripolar set.
  \begin{pro} The $(m,T)$-capacity admits the following properties:
\begin{enumerate}
\item If $E$ is a Borel set of $ \Omega $, then

$ cap_{m,T}(E,\Omega)= \sup \{\int_{E}(dd^{c}u)^{m-p}\wedge \beta^{n-m}\wedge T , u\in {\cal {SH}}_{m}(\Omega)\ and    -1\leqslant u \leqslant 0 \}.$
\item If $ E_{1}\subset E_{2}\subset\Omega $  then  $ cap_{m,T}(E_{1},\Omega)\leqslant cap_{m,T}(E_{2},\Omega).$
\item If $ E \subset \Omega \subset \Omega^{'} $ then $  cap_{m,T}(E,\Omega)\geqslant cap_{m,T}(E,\Omega^{'}) .$
 \item If $ E_{1},E_{2},...  $ are Borel subsets of $ \Omega $ then
$ cap_{m,T}(\cup_{j=1}^{\infty}E_{j},\Omega) \leqslant  \sum\limits_{j=1}^{\infty}cap_{m,T}(E_{j},\Omega).$
\item If $ E_{1}\subset E_{2}\subset...\subset\Omega $ are Borel sets then $cap_{m,T}(\cup_{j=1}^{\infty}E_{j},\Omega)=\lim \limits_{j\rightarrow+\infty}cap_{m,T}(E_{j},\Omega).$
\end{enumerate}
\end{pro}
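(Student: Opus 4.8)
The plan is to reduce everything to standard facts about the positive Radon measures $\mu_u:=(dd^cu)^{m-p}\wedge\beta^{n-m}\wedge T$, where $u$ runs over the admissible class $\mathcal{A}(\Omega):=\{u\in{\cal {SH}}_m(\Omega):\ -1\leqslant u\leqslant 0\}$. For a Borel set $E\subset\Omega$ put $b(E):=\sup_{u\in\mathcal{A}(\Omega)}\mu_u(E)$. By the very definition of the capacity, $cap_{m,T}(K,\Omega)=b(K)$ for $K$ compact, and $cap_{m,T}(E,\Omega)=\sup\{b(K):\ K\subset E\text{ compact}\}$ for arbitrary $E$. The key elementary input is that each $\mu_u$ is a positive current of bidegree $(n,n)$, hence a positive Radon measure; since $\Omega$ is $\sigma$-compact, $\mu_u$ is $\sigma$-finite, so it is inner regular: $\mu_u(E)=\sup\{\mu_u(K):\ K\subset E\text{ compact}\}$ for every Borel set $E$. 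All five statements will come from this, from monotonicity $E_1\subset E_2\Rightarrow\mu_u(E_1)\leqslant\mu_u(E_2)$, from countable subadditivity and continuity from below of $\mu_u$, and from routine manipulations of suprema.

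Properties (2) and (3) I would dispatch directly from the definitions. Monotonicity of $b$ in $E$ gives (2) after passing to the supremum over compact subsets. For (3), I use that the restriction to $\Omega$ of any $u\in{\cal {SH}}_m(\Omega^{'})$ with $-1\leqslant u\leqslant 0$ is again admissible on $\Omega$, hence $b_{\Omega^{'}}(K)\leqslant b_{\Omega}(K)$ for every compact $K\subset E$, and taking the supremum over such $K$ yields $cap_{m,T}(E,\Omega^{'})\leqslant cap_{m,T}(E,\Omega)$.

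The core is (1). The inequality $cap_{m,T}(E,\Omega)\leqslant b(E)$ is immediate, since $cap_{m,T}(K,\Omega)=b(K)\leqslant b(E)$ for every compact $K\subset E$ by monotonicity of $b$. For the reverse inequality, I fix $u\in\mathcal{A}(\Omega)$ and invoke inner regularity of $\mu_u$: for every compact $K\subset E$ one has $\mu_u(K)\leqslant cap_{m,T}(K,\Omega)\leqslant cap_{m,T}(E,\Omega)$, so $\mu_u(E)=\sup_{K\subset E}\mu_u(K)\leqslant cap_{m,T}(E,\Omega)$; taking the supremum over $u\in\mathcal{A}(\Omega)$ gives $b(E)\leqslant cap_{m,T}(E,\Omega)$. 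Granting (1), properties (4) and (5) are purely measure theoretic. For (4), countable subadditivity of $\mu_u$ gives $\mu_u\big(\bigcup_jE_j\big)\leqslant\sum_j\mu_u(E_j)\leqslant\sum_j b(E_j)$ for each $u$; taking the supremum over $u$ and using (1) gives $cap_{m,T}\big(\bigcup_jE_j,\Omega\big)\leqslant\sum_j cap_{m,T}(E_j,\Omega)$. For (5), continuity from below of $\mu_u$ yields $\mu_u\big(\bigcup_jE_j\big)=\sup_j\mu_u(E_j)$ for an increasing sequence $E_j$, so after exchanging the two suprema $b\big(\bigcup_jE_j\big)=\sup_j b(E_j)$, which with (1) and the monotonicity from (2) gives $cap_{m,T}\big(\bigcup_jE_j,\Omega\big)=\lim_j cap_{m,T}(E_j,\Omega)$.

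I expect the only point needing real care to be the inner-regularity step used in (1): one must be sure that for bounded $m$-sh $u$ the current $(dd^cu)^{m-p}\wedge\beta^{n-m}\wedge T$ is a genuine Radon measure, which is exactly what the inductive construction of the Hessian operator recalled above provides; once this is granted the rest is formal bookkeeping with suprema and the classical regularity and $\sigma$-additivity properties of Radon measures.
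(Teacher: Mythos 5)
Your proof is correct. The paper itself gives no argument for this proposition: it is recalled in the preliminaries as a known result of Dhouib--Elkhadhra (reference [12] in the bibliography), whose proof follows the classical Bedford--Taylor scheme. Your argument is exactly that scheme: once one knows that $\mu_u=(dd^cu)^{m-p}\wedge\beta^{n-m}\wedge T$ is a positive $(n,n)$-current, hence a locally finite positive Borel measure on an open subset of $\mathbb{C}^n$ and therefore inner regular on Borel sets, item (1) reduces the set function to $b(E)=\sup_u\mu_u(E)$, and (2)--(5) follow from monotonicity, restriction of competitors, countable subadditivity and continuity from below of each $\mu_u$, together with the harmless interchange of suprema. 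The one point you flag as needing care --- that $\mu_u$ is a genuine Radon measure --- is indeed supplied by the inductive definition of the Hessian operator recalled before Proposition 2 (a positive distribution of bidegree $(n,n)$ is a positive measure, locally finite by the Chern--Levine--Nirenberg estimate), so nothing is missing.
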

 Denote by ${\cal {SH}}_{m}^{-}(\Omega)$ the set of negative $m$-sh functions and assume that $\Omega$ is an $m$-hyperconvex domain, that is
  it is bounded, connected and there exist an $m$-sh  function continuous and exhaustive on $\Omega $. In other words, there is a function $\varphi:\Omega\rightarrow[-\infty,0[$, $m$-sh  and continuous such that $\{z\in\Omega;\ \varphi(z)<c\}\Subset\Omega$ for every $ c<0 $. Following \cite{ref13}, we define the Cegrell classes:
\vskip0.1cm
$ \mathscr{E}^{m}_0(\Omega)=\{u\in {\cal {SH}}_m^-(\Omega)\cap L^\infty(\Omega), {\ds\lim_{z\rightarrow\partial\Omega}} u(z) = 0,\int_{\Omega } (dd^cu)^{m}\wedge\beta^{n-m} < +\infty\}$
\vskip0.1cm
$\mathscr{F}^{m}(\Omega)= \{u \in{\cal {SH}}_m^-(\Omega), \exists (u_{j})_j\subset {\mathscr E}^{m}_0(\Omega),u_{j}\downarrow u,\sup_{j}\int_{\Omega} (dd^c u_{j})^{m}\wedge\beta^{n-m}< +\infty\},$\\
and we define ${\mathscr E}^{m}(\Omega)$ by the set of $u\in {\cal {SH}}_m^-(\Omega)$ such that for every $z_{0} \in \Omega $, there exist a neighborhood $\omega$ of $z_{0}$ and a sequence $(u_{j})_{j} \subset {\mathscr E}^{m}_0 (\Omega)$ such that $u_{j}\downarrow u$ on $\omega$ and $\sup_{j} \int_{\Omega} (dd^c u_{j})^{m}\wedge\beta^{n-m} < \infty.$
In our work we need the following properties:
\begin{enumerate}
\item If $\varphi \in  {\mathscr K}\in\{{\mathscr E}_{0}^{m}(\Omega),{\mathcal F}^{m}(\Omega),{\mathscr E}^{m}(\Omega)\}$ and $\psi \in {\cal {SH}}_m^-(\Omega)$, then $ \max(\varphi,\psi) \in {\mathscr K}.$
\item If $ u\in \mathscr{E}^{m}(\Omega) $ then $u$ is locally in $\mathscr{F}^{m}(\Omega) $. That is to say, $ \forall u \in \mathscr{E}^{m}(\Omega)\ \ and\ \forall\ K\Subset \Omega$, it exists $v\in \mathscr{F}^{m}(\Omega) $ as $ u=v\ on\ K. $
\item If $ u \in {\mathscr F}^{m}(\Omega)$ then $\int_{\Omega} (dd^c u)^{m}\wedge\beta^{n-m}< +\infty$.
\end{enumerate}
\section{Complex Hessian operator and convergence in capacity}
\subsection{Complex Hessian operator}
According to Lu \cite{ref13}, the complex Hessian operator is well defined and continuous for decreasing sequences of locally bounded  $m$-sh functions. This result was generalized by Dhouib-Elkhadhra \cite{ref9}, by assuming that the $m$-sh functions are bounded near the boundary. For $ v\in{\cal {SH}}_{m}(\Omega)$, we denote by  ${\mathscr L}(v)$  the set of points $ z \in \Omega $ such that $ v $ is not bounded in any neighborhood of $ z $ and by $ {\cal H}_{s}$ the Hausdorff measure of dimension $ s $. When $T=1$, by using a technics go back to Demailly \cite{ref10} on the study of the Monge-Amp\`ere operator, Wan and Wang \cite{ref19} have generalized Proposition 2 for general $m$-sh functions with condition on the Hausdorff measure of the their common locus. Throughout the paper, we denote by  $\sigma_{T}=T\wedge \beta^{n-p}$ the trace measure of a current $T\in{\mathscr C}_p^m(\Omega)$. Then, relying on a recently work of \cite{ref1}, we prove analogically:
\begin{thm}
Let $ T\in{\mathscr C}_p^m(\Omega)$, $m\geqslant p+1$ and $(v_{j})_{j}$ a sequence of $m$-sh functions decreasing to an $m$-sh function $v\in L^1_{loc}(\sigma_T)$.  For every $ l=1,...,k\leqslant m-p, $ let  $(u_{l}^{j})_{j}$  be a sequence of $m$-sh functions decreasing to $u_{l}\in {\cal {SH}}_{m}(\Omega)\cap L^{\infty}_{loc}(\Omega)$. Then, $v_{j}dd^{c}u_{1}^{j}\wedge...\wedge dd^{c}u_{k}^{j}\wedge \beta^{n-m}\wedge T$ converges weakly to $vdd^{c}u_{1}\wedge...\wedge dd^{c}u_{k}\wedge\beta^{n-m} \wedge T.$ \end{thm}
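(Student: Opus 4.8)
The plan is to run the classical truncation scheme, the only genuinely new ingredient being the control of the term carrying the unbounded function $v$, for which I would import the integration-by-parts technique of \cite{ref1}. The statement is local and unchanged when a constant is added to the $v_j$, so I first fix a ball $B'\Subset\O$ on which all the data live; since $u_l^j\ge u_l$ and $u_l$ is locally bounded, the $u_l^j$ are uniformly bounded on every $K'\Subset B'$, so that $\Theta_j:=\dc u_1^j\w\cdots\w\dc u_k^j\w\b^{n-m}\w T$ and $\Theta:=\dc u_1\w\cdots\w\dc u_k\w\b^{n-m}\w T$ are well-defined $m$-positive closed currents, with $\Theta_j\to\Theta$ weakly by Proposition~2; we may also assume $v_j\le 0$ on $B'$. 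A preliminary point is that all the objects below have locally finite mass: by the weighted Chern--Levine--Nirenberg inequality (\cite{ref9}, \cite{ref13}), for $K\Subset K'\Subset B'$ one has $\int_K|v_j|\,d\sigma_{\Theta_j}\le C_{K,K'}\int_{K'}|v|\,d\sigma_T<\infty$ with $C_{K,K'}$ independent of $j$ (here $v\le v_j\le 0$ and the uniform bounds on the $u_l^j$ are used), and likewise for $\Theta$ and $v$; this is where $v\in L^1_{\rm loc}(\sigma_T)$ is spent. Finally, it is enough, as usual, to prove $\langle v_j\Theta_j,\chi\g\rangle\to\langle v\Theta,\chi\g\rangle$ for $\chi\ge 0$ smooth with compact support $K\Subset B'$ and $\g$ a constant-coefficient $(m-p-k,m-p-k)$-form, and, the coefficients of an $m$-positive current being dominated by a constant times its trace, $|\langle f\,\Theta_j,\chi\g\rangle|\le C_\g\int_K|f|\,d\sigma_{\Theta_j}$ for every Borel $f$ (and likewise with $\Theta$), an estimate I use repeatedly.

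For $s>0$ put $v^s:=\max(v,-s)$ and $v_j^s:=\max(v_j,-s)$; these are $m$-sh and bounded on $B'$, $v_j^s\downarrow v^s$ as $j\to\infty$, $v^s\downarrow v$ as $s\to\infty$, and, $t\mapsto\max(t,-s)$ being nondecreasing and $1$-Lipschitz while $v_j\ge v$, we have the pointwise bounds $0\le v_j^s-v_j\le v^s-v\le|v|$ on $B'$. I would then write
$$v_j\Theta_j-v\Theta=(v_j-v_j^s)\Theta_j+\bigl(v_j^s\Theta_j-v^s\Theta\bigr)+(v^s-v)\Theta=:\mathrm{A}_j+\mathrm{B}_j+\mathrm{C},$$
and dispose of the pieces in turn. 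The term $\mathrm{C}$ does not depend on $j$; since $0\le v^s-v\downarrow 0$ and $v^s-v\le|v|\in L^1_{\rm loc}(\sigma_\Theta)$, dominated convergence gives $\langle\mathrm{C},\chi\g\rangle\to 0$ as $s\to\infty$. For fixed $s$, Proposition~2(1) applied to the locally bounded functions $v_j^s\downarrow v^s$ and $u_l^j\downarrow u_l$ gives $v_j^s\Theta_j\to v^s\Theta$ weakly, so $\langle\mathrm{B}_j,\chi\g\rangle\to 0$ as $j\to\infty$. For $\mathrm{A}_j$, using $|v_j-v_j^s|=v_j^s-v_j\le v^s-v$,
$$|\langle\mathrm{A}_j,\chi\g\rangle|\ \le\ C_\g\int_K(v_j^s-v_j)\,d\sigma_{\Theta_j}\ \le\ C_\g\int_K(v^s-v)\,d\sigma_{\Theta_j},$$
so the whole statement reduces to the single uniform estimate
$$\lim_{s\to\infty}\ \sup_j\int_K(v^s-v)\,d\sigma_{\Theta_j}\ =\ 0.$$

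I expect this last estimate to be the \textbf{main obstacle}. It will not come from soft arguments: the weighted Chern--Levine--Nirenberg inequality only gives $\int_K(v^s-v)\,d\sigma_{\Theta_j}\le C\,\|v\|_{L^1(K',\sigma_T)}$, which is uniform in $j$ but does \emph{not} decay in $s$, and the measures $\sigma_{\Theta_j}$ genuinely move with $j$, so a uniform mass bound alone cannot control the tails. The route I would take, following \cite{ref1}, is to integrate by parts so as to transfer the operators $\dc$ off the uniformly bounded potentials $u_l^j$ and onto the weight $w:=v^s-v$ --- legitimate since $\b^{n-m}\w T$ and the intermediate currents are closed and since $w=v^s+(-v)$ is a difference of $m$-sh functions --- after which the $u_l^j$ enter only through their (uniformly bounded) sup-norms, and $\int_K(v^s-v)\,d\sigma_{\Theta_j}$ gets majorized by finitely many integrals in which $\Theta_j$ no longer appears: integrals of $v^s-v$ (and of $\mathbf 1_{\{v<-s\}}$) against $\sigma_T$ and against Hessian-type measures built from $v$, $\b$ and $T$. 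The key input from \cite{ref1} is that, after this reorganization, the resulting bound still tends to $0$ as $s\to\infty$ by dominated convergence (this being exactly where $v\in L^1_{\rm loc}(\sigma_T)$ is used) and uniformly in $j$, since no $j$ is left. The delicate steps I anticipate are: a regularization of $v$ (and, if needed, of $T$) to justify the integrations by parts when the data are singular; and, above all, the mixed Leibniz terms $u_l^j\,d\chi\w d^{c}w\w(\cdots)$, in which $dw$ is merely an $L^1_{\rm loc}$ one-form wedged against a possibly singular current, so that neither Stokes' theorem nor Chern--Levine--Nirenberg applies directly; controlling these is the technical heart taken over from \cite{ref1}.

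Granting the displayed estimate, the argument closes in the standard way: given $\ep>0$, first choose $s$ so large that $\sup_j|\langle\mathrm{A}_j,\chi\g\rangle|<\ep$ and $|\langle\mathrm{C},\chi\g\rangle|<\ep$, and then $J$ so that $|\langle\mathrm{B}_j,\chi\g\rangle|<\ep$ for $j\ge J$; hence $\limsup_j|\langle v_j\Theta_j-v\Theta,\chi\g\rangle|\le 3\ep$, and letting $\ep\to 0$ gives $v_j\Theta_j\to v\Theta$ weakly.
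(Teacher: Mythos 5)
Your reduction is sound as far as it goes: the splitting $v_j\Theta_j-v\Theta=(v_j-v_j^s)\Theta_j+(v_j^s\Theta_j-v^s\Theta)+(v^s-v)\Theta$ is correct, the term $\mathrm{B}_j$ is indeed handled by Proposition 2 applied to $v_j^s=\max(v_j,-s)\downarrow v^s$, and $\mathrm{C}$ goes away by dominated convergence once one has $v\in L^1_{loc}(\sigma_\Theta)$ from a weighted Chern--Levine--Nirenberg inequality. But the proof stops exactly where the theorem begins: the uniform tail estimate $\lim_{s\to\infty}\sup_j\int_K(v^s-v)\,d\sigma_{\Theta_j}=0$ is the entire content of the statement (modulo the bounded case), and you explicitly leave it as a sketch. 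The route you propose for it does not close. Moving the $dd^c$'s off the $u_l^j$ and onto $w=v^s-v$ by integration by parts produces, as you yourself note, terms of the form $u_l^j\,d\chi\wedge d^c w\wedge(\cdots)$ and $u_l^j\,\chi\,dd^cw\wedge(\cdots)$ in which $dd^c w=dd^cv^s-dd^cv$ is a difference of positive currents with no sign and no mass control localized to $\{v<-s\}$, and in which the remaining factors still carry the other $u_{l'}^j$'s, so $j$ does not in fact disappear after the reorganization; one would have to iterate and control mixed currents $d^c(v^s-v)\wedge dd^cu_{l'}^j\wedge\cdots$ that are not covered by Stokes or by CLN. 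Moreover, Lemma 2.2 of \cite{ref1} (reproduced here as Lemma 2) is not a truncation estimate for the unbounded factor $v$: it is a monotonicity statement in the \emph{bounded} arguments $u_0\leqslant u_1$ under the hypothesis that they agree near $\partial\Omega\cap\Supp T$, so citing it does not supply the missing bound.

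For comparison, the paper avoids truncating $v$ altogether. It first gets the one-sided inequality $\Theta\leqslant vS$ for any cluster value $\Theta$ of $(v_jS_j)_j$ by the elementary monotonicity $v_jS_j\wedge\varphi\leqslant v_{j_0}\star\rho_\varepsilon\,S_j\wedge\varphi$ for $j\geqslant j_0$, and then reduces to showing $\int(vS-\Theta)\wedge\beta^{m-p-k}\leqslant 0$ on a ball after replacing the $u_l^j$ by $A(|z-z_0|^2-r^2)$ near $\partial{\mathscr B}\cap\Supp T$. Lemma 2, applied $k$ times in the $u$-variables with these fixed boundary data, gives $\int vS\wedge\beta^{m-p-k}\leqslant\int v_jS_j\wedge\beta^{m-p-k}$, and the remaining boundary contribution is $A^kv_j\beta^{n-m+k}\wedge T$, whose convergence to $A^kv\beta^{n-m+k}\wedge T$ is exactly where $v\in L^1_{loc}(\sigma_T)$ is spent. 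If you want to salvage your decomposition, the missing estimate should be attacked by this same device (modify the $u_l^j$ near the boundary and compare total masses), not by integrating by parts against $v^s-v$.
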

When $m=n$, we recover a result of \cite{ref1} for the trivial current  $T=1$.
\begin{proof}
Thanks to Theorem 2.6 in \cite{lu article}, we have $$ S_{j}=dd^{c}u_{1}^{j}\wedge...\wedge dd^{c}u_{k}^{j}\wedge\beta^{n-m}\wedge T \ {\rm converges\  weakly \  to} \ S=dd^{c}u_{1}\wedge...\wedge dd^{c}u_{k}\wedge\beta^{n-m}\wedge T.$$ Based on the inequality 1.2.8 in \cite{ref13}, we prove that the sequence $(v_{j} S_{j})_{j} $ is locally uniformly bounded in masses. Hence, it  suffices to show that if  $ (v_{j}S_{j})_{j}$ converges weakly to $\Theta$ when $j\rightarrow +\infty$ then $ \Theta=vS.$ For this aim, since $S_j,S$ are positive currents, then for any strongly positive form $\varphi$ of bidegree $(m-p-k,m-p-k)$, one has $$v_{j}S_{j}\wedge\varphi\leqslant v_{j_{0}} S_{j}\wedge\varphi\leqslant v_{j_{0}}\star\rho_{\varepsilon} \ S_{j}\wedge\varphi,\quad \ \forall \varepsilon >0,\ \forall j\geqslant j_{0},$$ where $ v_{j_{0}}\star\rho_{\varepsilon}$ is a regularization by convolution of $ v_{j_{0}}.$ If $ j\rightarrow +\infty, $ we get
$\Theta\wedge\varphi\leqslant v_{j_{0}}\star\rho_{\varepsilon}S\wedge\varphi,$ then we obtain $\Theta\wedge\varphi \leqslant vS\wedge\varphi$ when $\varepsilon\rightarrow 0$ and $j_{0}\rightarrow +\infty$. This means that $\Theta\leqslant vS$.
Conversely, without loss of generality, we assume that all the functions are defined on the closed euclidean ball $\overline{{\mathscr B}_{z_0}(r)}=\overline{{\mathscr B}(z_{0},r)}$ and
for every $l=1,...,k$ we have $ u_{l}^{j}=u_{l}=A(|z-z_{0}|^{2}-r^{2}) $ in a neighborhood of $ \partial{\mathscr B}_{z_0}(r)\cap\Supp T $ with $A>0.$ Since $ \Theta\leqslant vS, $ it remains to prove $\int_{{\mathscr B}_{z_0}(r)} \left(vS-\Theta\right)\wedge\beta^{m-p-k}\leqslant 0.$ For this, an adaptation of the proof of Lemma 2.2 in \cite{ref1}, yields the following  corresponding result:
 \begin{lem}
Let $\Omega$ be a bounded domain of $\cb^{n},$ $ T\in{\mathscr C}_p^m(\Omega)$ and $  v,u_{0},u_{1},...,u_{m-p}$ are $m$-sh functions defined in a neighborhood of $\overline{ \Omega} .$  Assume that $u_{0},u_{1},...,u_{m-p}$ are locally bounded, $ u_{0}\leqslant u_{1} $ on $\Omega$ and $ u_{0}=u_{1} $ on $\Omega\cap{\mathscr O}$ with ${\mathscr O}$ is a neighborhood of $ \partial\Omega\cap \Supp T. $ Then $$ \ds\int_{\Omega}vdd^{c}u_{0}\wedge dd^{c}u_{2}\wedge...\wedge dd^{c}u_{m-p}\wedge \beta^{n-m}\wedge T\leqslant \int_{\Omega}vdd^{c}u_{1}\wedge dd^{c}u_{2}\wedge...\wedge dd^{c}u_{m-p}\wedge\beta^{n-m}\wedge T.$$
\end{lem}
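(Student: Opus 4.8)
The plan is to adapt the scheme of Lemma 2.2 in \cite{ref1}, stripping the singularities of $v$ until an honest integration by parts becomes available. Write $R=dd^{c}u_{2}\wedge\cdots\wedge dd^{c}u_{m-p}\wedge\beta^{n-m}\wedge T$ (an empty product of $dd^{c}$'s when $m-p=1$); by Blocki's Lemma 1, Definition 1 and the inductive definition of the Hessian operator, $R$ is closed and $m$-positive, so that $\mu_{0}:=dd^{c}u_{0}\wedge R$, $\mu_{1}:=dd^{c}u_{1}\wedge R$ and $dd^{c}v_{k}\wedge R$ (for any smooth $m$-sh $v_{k}$) are positive measures on $\Omega$. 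Since $u_{0},u_{1}$ are locally bounded and all the data are defined near $\overline{\Omega}$, $\mu_{0}$ and $\mu_{1}$ have finite mass, hence $dd^{c}(u_{0}-u_{1})\wedge R=\mu_{0}-\mu_{1}$ is a signed measure of finite total variation. As $v$ is $m$-sh, $v\leqslant M:=\sup_{\overline{\Omega}}v<+\infty$, so $\int_{\Omega}v\,d\mu_{i}\in[-\infty,+\infty)$ is well defined and the claim reads $\int_{\Omega}v\,d\mu_{0}\leqslant\int_{\Omega}v\,d\mu_{1}$.

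\emph{Reduction to bounded $v$.} Granting the inequality for bounded weights, apply it to $v^{N}:=\max(v,-N)\in{\cal {SH}}_{m}^{-}(\Omega)$ (Proposition 1(4)), which is bounded and decreases to $v$, obtaining $\int_{\Omega}v^{N}d\mu_{0}\leqslant\int_{\Omega}v^{N}d\mu_{1}$ with both sides finite. Letting $N\to+\infty$, monotone convergence (applied to $M-v^{N}\uparrow M-v\geqslant0$) gives $\int_{\Omega}v^{N}d\mu_{i}\downarrow\int_{\Omega}v\,d\mu_{i}$ in $[-\infty,+\infty)$, and the inequality passes to the limit; no $\infty-\infty$ ambiguity occurs because it is settled at each finite stage.

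\emph{The bounded case.} Let $v$ be bounded and $v_{k}=v\star\chi_{k}$ its standard regularization: by Proposition 1(1),(3) the $v_{k}$ are smooth $m$-sh near $\overline{\Omega}$ for $k$ large, uniformly bounded, $v_{k}\downarrow v$, and $dd^{c}v_{k}$ is a smooth $m$-positive $(1,1)$-form, so $dd^{c}v_{k}\wedge R\geqslant0$. The crux is the boundary-free integration-by-parts identity
$$\int_{\Omega}v_{k}\,dd^{c}(u_{0}-u_{1})\wedge R=\int_{\Omega}(u_{0}-u_{1})\,dd^{c}v_{k}\wedge R .$$
Indeed, with $w:=u_{0}-u_{1}$ one has $w\equiv0$ (hence $dw=0$, $dd^{c}w=0$) on ${\mathscr O}\cap\Omega$, while $T$ — so also $R$ — vanishes near $\partial\Omega\setminus{\mathscr O}$, that set being a compact subset of $\partial\Omega$ disjoint from $\Supp T$ (each point of $\partial\Omega\cap\Supp T$ lies in the open set ${\mathscr O}$, hence not in $\overline{\Omega\setminus{\mathscr O}}$); therefore $wR$, $v_{k}\,dd^{c}w\wedge R$, $dw\wedge d^{c}v_{k}\wedge R$ and the remaining terms are supported in the compact subset $\overline{\Omega\setminus{\mathscr O}}\cap\Supp T$ of $\Omega$, so the classical cut-off argument for bounded $m$-sh functions against a closed positive current yields the identity. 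Its right-hand side is $\leqslant0$ for every $k$, since $w\leqslant0$ on $\Omega$ and $dd^{c}v_{k}\wedge R\geqslant0$, and both sides are finite because $v_{k},u_{0},u_{1}$ are bounded. Writing the left-hand side as $\int_{\Omega}v_{k}d\mu_{0}-\int_{\Omega}v_{k}d\mu_{1}$ and letting $k\to+\infty$, monotone convergence gives $\int_{\Omega}v_{k}d\mu_{i}\to\int_{\Omega}v\,d\mu_{i}$, which is finite since $v$ is bounded, whence $\int_{\Omega}v\,d\mu_{0}-\int_{\Omega}v\,d\mu_{1}\leqslant0$, as required.

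The step I expect to be the real obstacle is making the boundary-free integration by parts fully rigorous: one must combine the two support hypotheses — $w=0$ near $\partial\Omega\cap\Supp T$, and $T=0$ near the remainder of $\partial\Omega$ — into a single cut-off $\chi\in{\mathscr C}^{\infty}_{c}(\Omega)$ equal to $1$ near the compact set $\overline{\Omega\setminus{\mathscr O}}\cap\Supp T$, and then verify that every term produced by $d\chi$ and $dd^{c}\chi$ is annihilated on $\Supp(1-\chi)$ either by $w$ or by $R$. The remaining ingredients — the positivity statements and the monotone-convergence bookkeeping — are routine.
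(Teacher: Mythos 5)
Your proof is correct and takes essentially the same route as the paper: the paper establishes Lemma 2 only by remarking that it is ``an adaptation of the proof of Lemma 2.2 in \cite{ref1}'', and your argument --- integration by parts for bounded weights, made boundary-free by combining $u_{0}=u_{1}$ near $\partial\Omega\cap\Supp T$ with the vanishing of $T$ near the compact set $\partial\Omega\smallsetminus{\mathscr O}$, followed by the monotone limit $\max(v,-N)\downarrow v$ --- is precisely that adaptation carried over to the $m$-positive current setting. The one tacit point is the finiteness of $\mu_{0}(\Omega)$ and $\mu_{1}(\Omega)$, which requires $T$ to have finite mass up to $\partial\Omega$ (true in the paper's application, where $\Omega$ is a ball compactly contained in the domain of $T$); your localization of $dd^{c}(u_{0}-u_{1})\wedge R$ to a compact subset of $\Omega$ renders this harmless.
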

Applying Lemma 2 $k$-times for $\Omega={\mathscr B}_{z_0}(r)$ and ${\mathscr O}$ is a neighborhood of $ \partial{\mathscr B}_{z_0}(r)\cap \Supp T$, we get $ \int_{{\mathscr B}_{z_0}(r)} v S\wedge\beta^{m-p-k}\leqslant \int_{{\mathscr B}_{z_0}(r)}  v_{j}S_{j}\wedge\beta^{m-p-k} .$ Let $K$ be a compact set such that ${\mathscr B}_{z_0}(r)\smallsetminus K\subset{\mathscr B}_{z_0}(r)\cap {\mathscr O}$. Then
  $$\begin{array}{lcl}
  \ds\int_{{\mathscr B}_{z_0}(r)}\left(vS-\Theta\right)\wedge\beta^{m-p-k}&\leqslant&\liminf\limits_{j\rightarrow +\infty}\ds\int_{{\mathscr B}_{z_0}(r)}\left(v_jS_j-\Theta\right)\wedge\beta^{m-p-k}\\&=&\liminf\limits_{j\rightarrow +\infty}\ds\int_{K}\left(v_jS_j-\Theta\right)\wedge\beta^{m-p-k}+\liminf\limits_{j\rightarrow \rightarrow+\infty}\ds\int_{{\mathscr B}_{z_0}(r)\smallsetminus K}\left(v_jS_j-\Theta\right)\wedge\beta^{m-p-k} \\&=&\liminf\limits_{j\rightarrow +\infty}\ds\int_{{{\mathscr B}_{z_0}(r)}\smallsetminus K}\left(A^kv_j\beta^{n-m+k}\wedge T-\Theta\right)\wedge\beta^{m-p-k}.
  \end{array}$$
  We have used the fact that the first term in the first equality equals to $0$. Moreover, since $v\in L^1_{loc}(\sigma_T)$ we have $A^kv_{j}\beta^{n-m+k}\wedge T $ converges weakly to $A^kv\beta^{n-m+k}\wedge T $ which equals to $\Theta$ on ${\mathscr B}_{z_0}(r)\smallsetminus K$. This imply that the second term is equals to $0$ also.
\end{proof}
Next, we shall direct our attention to the study of the convergence of the sequence of operators $( dd^{c}u_{1}^{j}\wedge...\wedge dd^{c}u_{q}^{j}\wedge \beta^{n-m}\wedge T_{j})_{j}$ where $T\in {\mathscr C}_{p}^{m}(\Omega)$, $T_{j}=T\star\chi_{j}$ is a smooth regularization by convolution of $T$ and $(u_{k}^{j})_{j}$ are  sequences of  $m$-sh functions not necessarily bounded and decreasing towards $u_{k}$ for all $1\leqslant k\leqslant q$. We strongly inspired from the technics of Ben Messaoud-El Mir \cite{me-el}, by investigating  firstly the convergence for the local potential associated to $T$. First we state
\begin{defn}
Let $u_{1},...,u_{q}$ are  $m$-sh functions  on $\Omega.$ We will say that $dd^{c}u_{1}\wedge...\wedge dd^{c}u_{q}\wedge \beta^{n-m} $
 is well defined if and only if we have the following induction on $ k = 2, ..., q $:\begin{enumerate}
 \item $u_k\in L^{1}_{loc}(dd^{c}u_{1}\wedge...\wedge dd^{c}u_{k-1}\wedge\beta^{n-m})$.
 \item For all $u_ {1}^{j},..., u_{k}^{j} $ decreasing sequences of $m$-sh functions  and which  converges locally to $u_ {1} ,. .., u_ {k}$ respectively, we have $$ u_{k}^{j}dd^{c}u_{1}^{j}\wedge...\wedge dd^{c}u^{j}_{k-1}\wedge \beta^{n-m}\L u_{k}dd^{c}u_{1}\wedge...\wedge dd^{c}u_{k-1}\wedge \beta^{n-m},\ weakly.$$ Setting $dd^{c}u_{1}\wedge ...\wedge dd^{c}u_{k}\wedge \beta^{n-m}=dd^{c}(u_{k}dd^{c}u_{1}\wedge...\wedge dd^{c}u_{k-1}\wedge \beta^{n-m})$ .
 \end{enumerate}
\end{defn}
\begin{exe} The current  $ dd^{c}u_{1}\wedge...\wedge dd^{c}u_{q}\wedge \beta^{n-m} $ is well defined  as soon as one of the following two conditions was satisfied: \begin{enumerate}
\item $\Omega$ is a bounded strictly pseudoconvex open subset of $\cb^{n}$  such that  ${\mathscr L}(u_{k})\cap\partial \Omega=\emptyset$ for all $1\leqslant k\leqslant q$ (see \cite{ref9}).
\item ${\cal H}_{2m-2l+1}({\mathscr L}(u_{i_{1}})\cap...\cap {\mathscr L}(u_{i_{l}}))=0$  for all
 $1\leqslant i_{1}<...<i_{l}\leqslant q$ (see \cite{ref19}).
\end{enumerate}
\end{exe}
Analogously to the complex setting, in order to study the convergence of the complex Hessian operator relatively to $T_j$, we shall pass by the local potential associated to $T$. Assume that $T$ is a current of order $0$ (i.e $T$  continues into the space of continuous functions) and of bidegree $(p,p)$ in $\Omega$ and let $\Omega_0\Subset\Omega$. Let $\eta\in{\mathscr D}(\Omega)$ such that $0\leqslant \eta\leqslant 1 $ and $\eta\equiv 1 $ in a neighborhood of $\overline\Omega_0$. Then the local potential $U=U(\Omega_0,T)$ associated to $T$ is the current of bidegree $(p-1,p-1)$ defined by:
 $$U(z) =\! -c_n\int_{x \in \cb^n}\!\eta (x) T(x)\wedge
{(dd^c|z-x|^2)^{n-1}\over|z-x|^{2n-2}},\qquad{\rm where} \quad c_n= \ds {1\over (n-1)(4\pi )^n }.$$
Denote by $p_1$ (resp.$p_2$) the first (resp.second) projection of $\cb^n\times\cb^n$ on $\cb^n$, i.e, $p_1(x,z)=x$ and $p_2(x,z)=z$. Let also $\tau$ be the function defined by $\tau(x,z)=z-x$. Denote by $K(t)=-c_n|t|^{2-2n}(dd^c|t|^2)^{n-1}$, then it is clear that $U=p_{2\star}[p_1^\star(\eta T) \wedge \tau^\star K]$ and $U_j=U\ast \chi_j=p_{2\star}[p_1^\star(\eta T) \wedge \tau^\star K_j]$, where $K_j=K\star\chi_j$. By an arguments of pull-back and direct image of forms and currents, we see that $U$ is (strongly) negative provided that $T$ is (strongly) positive. In the context of $m$-positivity, if $T\in \mathscr {C}_{p}^{m}(\Omega)$ then $T_j$ is also $m$-positive. In fact, for all $\alpha_{1},...,\alpha_{m-p}$ $(1,1)$ $m$-positives forms with constant coefficients (see the comment after Definition 1) and for a positive function $\varphi\in \mathscr{D}(\Omega)$, it is easy to see that $$\langle T_j\wedge\alpha_1\wedge...\wedge\alpha_{m-p}\wedge\beta^{n-m},\varphi\rangle=\ds\int_{x\in\cb^{n}}\ds\int_{y\in\cb^{n}}\chi_{j}(x-y)\varphi(x)\wedge T(y)\wedge \alpha_1\wedge...\wedge\alpha_{m-p}\wedge\beta^{n-m}dydx$$
As $\alpha_1,...,\alpha_{m-p}$ are $m$-positive $(1,1)$-forms with constant coefficients with respect to $y$ and $T$ is $m$-positive, it implies that the right hand side in the above equality is positive.
Unfortunately, $m$-positivity and $m$-subharmonicity are not preserved by holomorphic maps. Therefore, we restrict ourselves to a smaller subclass of ${\mathscr C}_p^m(\Omega)$. Recall that a current $T\in{\mathscr C}_p^m(\Omega)$, $m\geqslant p+1$ is said to belong to ${\mathscr M}_p^m(\Omega)$ if $T$ is of order zero and the associated local potential is $m$-negative and the sequence $(U_j)_j$ is decreases. Obviously, the class ${\mathscr M}_p^m(\Omega)$ contains at least the class of strongly positive currents. Now, we state our first main generalization to the complex hessian theory of a result due to Ben Messaoud-El Mir \cite{me-el} for the complex setting.
\vskip0.15cm
\noindent {\bf Theorem A.} {\it Assume that $T\in{\mathscr M}_p^m(\Omega)$ and $U=U(\Omega_0,T)$ is the associated local potential to $T$, where $\Omega_{0}\Subset \Omega$ is strictly pseudoconvex. Let $ u_{1}, ..., u_{q}\in {\cal {SH}}_{m}(\overline{\Omega}_{0})$ satisfying ${\mathscr L}(u_{k})\cap \partial\Omega_{0}\cap \Supp (T)=\emptyset,$ for $ k=1,...,q\leqslant m-p+1$ such that  $dd^{c}u_{1}\wedge...\wedge dd^{c}u_{q}\wedge \beta^{n-m}$ is well defined. Let $u_{1}^{j},...,u_{q}^{j}$ are sequences of $m$-sh functions decreasing respectively to $u_1,...,u_{q}$. Then,
\begin{enumerate}
\item $  dd^{c}u_{1}\wedge...\wedge dd^{c}u_{q}\wedge \beta^{n-m}\wedge U_{j}\L dd^{c}u_{1}\wedge...\wedge dd^{c}u_{q}\wedge\beta^{n-m}\wedge U$ weakly on $\Omega_{0}$.
\item If $q< m-p+1$, then
\begin{enumerate}
\item $ dd^{c}u_{1}^{j}\wedge...\wedge dd^{c}u_{q}^{j}\wedge\beta^{n-m}\wedge U \L dd^{c}u_{1}\wedge...\wedge dd^{c}u_{q}\wedge\beta^{n-m}\wedge U$ weakly on $\Omega_{0}$.
\item $ dd^{c}u_{1}^{j}\wedge...\wedge dd^{c}u_{q}^{j}\wedge\beta^{n-m}\wedge U_{j}\L dd^{c}u_{1}\wedge...\wedge dd^{c}u_{q}\wedge\beta^{n-m}\wedge  U$ weakly on $\Omega_{0}$.
\end{enumerate}
\end{enumerate}}
\begin{proof}  By monotonicity of $(U_j)_j$, the statement (b) is an immediate consequence of $(1)$ and (a). We argue as in \cite{me-el}, so we can assume that $ \Omega_{0} =\{z \in \cb ^ {n}; \rho (z) <0 \},$ where $\rho$ is a smooth strictly psh function in the neighborhood of $ \overline {\Omega}_{0}.$ According to Lemma 2.7 in \cite{me-el}, we have $dd^c U_j=\eta T_j+R_j$, where $R_j$ is a $(p,p)$-form with uniformly bounded coefficients on $ \Omega_{0} $. Hence, there exists a constant $A> 0$ such that $R_j+A (dd^{c}\rho)^p$ is strongly positive and therefore is $m$-positive. Since $T_j$ is $m$-positive, it follows that $ dd^{c}(U _{j} + A \rho (dd^{c}\rho)^ {p-1})  $ is $m$-positive. Moreover, since $U$ is $m$-negative then also for $U_j$ and therefore $S_{j}=U_{j}+A\rho(dd^{c}\rho)^{p-1} $ and $S=U+A\rho(dd^{c}\rho)^{p-1}$ are $m$-negative.
On the other hand, in view of Theorem 2 in \cite{ref9}, it suffices to prove the statements for $S_{j}$ and $S$ instead  of $U _{j}$ and $U$.
As $\Supp (T)\cap {\mathscr L}(u_{k})\Subset \Omega_{0}$, consider $ \delta>0 $ small enough so that $\cup_{k=1}^{q} {\mathscr L}(u_{k})\cap\Supp (T)\subset \Omega_{\delta}=\{\rho<-\delta\}$.
Let $g\in{\mathscr D} (\Omega_{0})$ positive such that $ g\equiv 1 $ on $ \Omega_{\delta} $. Since the closed subsets $ \Supp (dg) \cap \Supp(T) $ and $ \cup_{k} {\mathscr L}(u_{k}) $ are disjoint, consider a function $ f \in {\mathscr C}^{\infty} (\cb^{n}) $ positive such that $ f\equiv 1 $ in the neighborhood of the first subset and $f\equiv 0 $ in the neighborhood of the second. (1) It is enough to show that the sequence $$ B_{j} = \int_{\Omega_{0}} g dd^{c} u_ {1} \wedge ... \wedge dd^{c} u_{q} \wedge \beta^{n-m}\wedge S_{j}\wedge (dd ^{c} \rho )^{m-p-q+1} $$ is convergent. For a real constant $b$, we  have: $$\begin{array}{lcl} B_{j}&=&\ds\int_{\Omega_{0}}(\rho+b) dd^{c}u_{1}\wedge...\wedge dd^{c}u_{q} \wedge \beta^{n-m}\wedge dd^{c}(gS_{j})\wedge (dd ^{c} \rho )^{m-p-q}\\&=&\ds\int_{\Omega_{0}} (\rho+b) dd^{c}u_{1}\wedge ...\wedge dd^{c}u_{q}\wedge \beta^{n-m}\wedge g dd^{c}S_{j}\wedge(dd ^{c} \rho )^{m-p-q}\\&+& 2\ds\int_{\Omega_{0}}(\rho+b) dd^{c}u_{1}\wedge ...\wedge dd^{c}u_{q}\wedge \beta^{n-m} \wedge dg\wedge d^{c}S_{j}\wedge (dd ^{c} \rho )^{m-p-q}\\&+&\ds\int_{\Omega_{0}} (\rho+b) dd^{c}u_{1}\wedge ...\wedge dd^{c}u_{q}\wedge \beta^{n-m}\wedge dd^{c}g\wedge S_{j}\wedge(dd ^{c} \rho )^{m-p-q}\\&=&B_j^1+B_j^2(dg)+B_j^3.\end{array}$$
Let's choose $b$ so that $ \rho + b> 0 $ in the neighborhood of $ \overline {\Omega}_{0} $. Since  $dd ^{c} S_{j} $  is $m$-positive, it is clear that $B_j^1\geqslant 0$, and therefore we have $0\geqslant B_j\geqslant B_j^2(dg)+B_j^3$. Since the sequence $(B_j)_j$ is decreases, it suffices to show the convergence of $B_j^2(dg)$ and $B_j^3$. We shall write $ B_{j}^{2} (dg)= B_{j} ^ {2} (fdg) + B_ {j}^{2}((1-f) dg)$. The coefficient of  the form $fdg$ belongs to ${\mathscr D}(\Omega_{0})$ and vanishing in the neighborhood of $ \cup_{k} {\mathscr L}(u_{k})$. Therefore the  functions $u_{k}$ are locally bounded in the neighborhood of $\Supp(fdg)$. Then in view of Theorem  7 in \cite{ref9}, we have $$\begin{array}{lcl}&\lim\limits_{j\rightarrow +\infty}&\ds\int_{\Omega_{0}}(\rho+b) dd^{c}u_{1}\wedge ...\wedge dd^{c}u_{q}\wedge \beta^{n-m}\wedge fdg \wedge d^{c}S_{j}\wedge (dd ^{c} \rho )^{m-p-q}\\&=&\ds\int_{\Omega_{0}
 }(\rho+b) dd^{c}u_{1}\wedge ...\wedge dd^{c}u_{q}\wedge \beta^{n-m}\wedge fdg \wedge d^{c}S\wedge (dd ^{c} \rho )^{m-p-q}\end{array}$$ On the other hand, the form $ (1-f) dg $ vanishes in a neighborhood ${\mathscr O} $ of $ \Supp (T) $. Since the singular support of $U$ is included in the support of $T$ then $ S_{j} $ converges to $S$ in $ {\mathscr C} ^ {\infty}_{p-1,p- 1} (\Omega_{0} \smallsetminus \overline {\mathscr O}) $
and $(1-f)dg \wedge d^{c} S_{j} $ converges to $ (1-f)dg \wedge d^{c}S $ in ${\mathscr D} _ {p,p} (\Omega_{0} \smallsetminus \overline {\mathscr O}) $.
By hypotheses, the current $dd^{c}u_{1}\wedge...\wedge dd^{c}u_{q}\wedge \beta^{n-m}$ is well defined in $\Omega_{0}$ then we have: $$\lim_{j\rightarrow+\infty}B_{j}^{2}((1-f)dg)=\ds \int_{\Omega_{0}}(\rho+b) dd^{c}u_{1}\wedge...\wedge dd^{c}u_{q}\wedge \beta^{n-m}\wedge (1-f)dg\wedge d^{c}S\wedge (dd ^{c} \rho )^{m-p-q}$$ Consequently, the convergence of $B_{j}^{2}$ follows. A similar arguments, give:
$$\lim\limits_{j\rightarrow +\infty}B_j^3= \ds\int_{\Omega_{0}}(\rho+b) dd^{c}u_{1}\wedge...\wedge dd^{c}u_{m-p}\wedge \beta^{n-m}\wedge dd^{c}g\wedge S$$
(2) (a) Let $ V $ be a neighborhood of $ (\Omega_ {0}\smallsetminus\Omega_{\delta})\cap\Supp (T)$ such that $\overline{V} \cap {\mathscr L}(u_{k}) = \emptyset, \ \forall k \in \{1, ...,q\}.$ In particular, the functions $u_{k}$  are locally bounded in $V $. Without loss of generality, we suppose that there exists a constant $M>1$ such that for all $k = 1, ..., q $, we have: $ -M \leqslant u_{k} \leqslant -1 $ on $ V.$  Let $ V^{'} \Subset V $ satisfying the same property as $V$ and $ \varphi $ be a  smooth function with compact support in a neighborhood of $ \overline {\Omega}_{0} $ such that $\varphi \equiv 1 $ on $ \overline {V ^{'}} \cap \overline {\Omega}_{0} $ and $ \varphi \equiv -2M $ on $ \overline {\Omega}_{0} \smallsetminus V$. Let $ A>\frac {2} {\delta} $ be a constant large enough  so that $ \varphi + A (\rho + \delta)$ is a psh function on $\overline{\Omega}_{0}$. Setting
 $$v_{k}(z)= \left \{
   \begin{array}{l c l}
        u_{k}(z)\ \ \quad \qquad\qquad\qquad \qquad z\in \Omega_{0}\smallsetminus V\\
      \max(\varphi+A\rho(z),u_{k}(z))\qquad z\in V
   \end{array}
   \right .$$
 On  $\overline{\Omega}_{0}\cap \partial V$, we have $\varphi+A\rho< -M$ then $v_{k}=u_{k}$ in a neighborhood of  $\overline{\Omega}_{0}\cap \partial V$ which implies that $v_{k}$ is an $m$-sh function on $\Omega_{0}$  and the current  $dd^{c}v_{1}\wedge...\wedge dd^{c}v_{q}\wedge\beta^{n-m}$ is well defined.
Moreover, we have $v_{k}=\varphi+A\rho$ on $V^{'}\cap\{\rho> -\frac{2}{A}\}$. By the same procedure of the max, we define the functions $ v_{k}^{j}$ relative to $u_ {k}^{j}$. Let $ \Theta $ be a weak limit of $ (  dd^{c} v_{1}^{j} \wedge ... \wedge dd^{c} v_{q}^{ j} \wedge \beta^{n-m}\wedge
 S)_{j} $. By the monotonicity of $ (S_{j})_{j} $ and in view of $(1)$, it is easy to see that $\Theta\leqslant  dd^{c}v_{1}\wedge...\wedge dd^{c}v_{q}\wedge \beta^{n-m}\wedge S$.
Let $ r \in\nb^{\ast}$, since $dd^cS_r$ is $m$-positive, the Stokes formula implies that for all positive function $ g \in \mathscr {D} (\Omega_ {0}) $, we have:
$$\begin{array}{lcl}
   &&\ds\int_{\Omega_{0}} dd^{c}v_{1}^{j}\wedge...\wedge dd^{c}(v_{q}^{j}-v_{q})\wedge \beta^{n-m}\wedge gS_{r}\wedge (dd^{c}\rho)^{m-p-q+1}\\ &\geqslant& 2\ds\int_{\Omega_{0}} (v_{q}^{j}-v_{q}) dd^{c}v_{1}^{j}\wedge...\wedge dd^{c}v_{q-1}^{j}\wedge \beta^{n-m}\wedge dg\wedge d^{c}S_{r}\wedge (dd^{c}\rho)^{m-p-q+1}\\&+&\ds\int_{\Omega_{0}}(v_{q}^{j}-v_{q})  dd^{c}v_{1}^{j}\wedge...\wedge dd^{c}v^{j}_{q-1}\wedge \beta^{n-m}\wedge dd^{c}g
\wedge S_{r}\wedge (dd^{c}\rho)^{m-p-q+1} .
\end{array}$$ Choose $ g $ such that $ g \equiv 1 $ on $ \overline {\Omega} _ {\frac {2} {A}} $ and  let $ f\in {\mathscr D} (V ^ {'}) $ such as $ f \equiv 1 $ on $ \Supp (dg) \cap \Supp (T) $. As $\Supp (fdg)\subset V^{'}\cap \{\rho> -\frac{2}{A}\}$, where $v_{k}^{j}=v_{k}=\varphi+A\rho$ then  $(v_{k}^{j}-v_{k})dg=(v_{k}^{j}-v_{k})(1-f)dg.$ Furthermore, $ (1-f) dg $ vanishes in the neighborhood $\mathscr O$ of $ \Supp (T) $, $S$ is of class $ {\mathscr C}^{\infty}$ in the neighborhood of $ \Supp ((1-f)dg) $ hence the sequence $ (1-f)dg \wedge d^{c} S_{r} $ converges into $ {\mathscr D} _{p, p} (\Omega_{0}) $ to $(1-f) dg \wedge d^{c}S $. By using the above inequality  and when $ r \rightarrow +\infty $, we get: $$\begin{array}{lcl}
   &&\ds\int_{\Omega_{0}} dd^{c}v_{1}^{j}\wedge...\wedge dd^{c}(v_{q}^{j}-v_{q})\wedge \beta^{n-m}\wedge gS\wedge (dd^{c}\rho)^{m-p-q+1}\\ &\geqslant& 2\ds\int_{\Omega_{0}} (v_{q}^{j}-v_{q}) dd^{c}v_{1}^{j}\wedge...\wedge dd^{c}v_{q-1}^{j}\wedge \beta^{n-m}\wedge (1-f)dg\wedge d^{c}S\wedge (dd^{c}\rho)^{m-p-q+1}\\&+&\ds\int_{\Omega_{0}}(v_{q}^{j}-v_{q})  dd^{c}v_{1}^{j}\wedge...\wedge dd^{c}v_{q-1}^{j}\wedge \beta^{n-m}\wedge (1-f)dd^{c}g
\wedge S\wedge (dd^{c}\rho
)^{m-p-q+1}.\end{array}$$
 For each  $k=1,...,q-1$, setting $$R_{k}^{j}=dd^{c}v_{1}^{j}\wedge ...\wedge dd^{c}v_{k-1}^{j}\wedge  dd^{c}v_{k+1}\wedge...\wedge dd^{c}v_{q}\wedge \beta^{n-m}\wedge (dd^{c}\rho)^{m-p-q+1}$$ and $D_{k}^{j}=2\ds\int_{\Omega_{0}}(v_{k}^{j}-v_{k})(1-f)dg\wedge d^{c}S\wedge R_{k}^{j}+\ds\int_{\Omega_{0}}(v_{k}^{j}-v_{k})(1-f)dd^{c}g\wedge S\wedge R_{k}^{j}$. Since $dd^{c}v_{1}\wedge...\wedge dd^{c}v_{q}\wedge \beta^{n-m}$ is well defined, it is clear that $\lim\limits_{j\rightarrow +\infty}D_k^j=0$. By iterating the above inequality, we obtain: $$\begin{array}{lcl}&  \ds\int_{\Omega_{0}}& dd^{c}v_{1}^{j}\wedge...\wedge dd^{c}v_{q}^{j}\wedge \beta^{n-m}\wedge gS\wedge (dd^{c}\rho)^{m-p-q+1}\\&\geqslant&\ds\int_{\Omega_{0}}dd^{c}v_{1}\wedge...\wedge dd^{c}v_{q}\wedge \beta^{n-m}\wedge gS\wedge (dd^{c}\rho)^{m-p-q+1} +\sum_{k=1}^{q-1}D_k^j .\end{array}$$
Consequently, when  $ j \rightarrow +\infty $, we get $$\ds\int_{\Omega_{0}}g\Theta\wedge (dd^{c}\rho)^{m-p-q+1}\geqslant \ds\int_{\Omega_{0}} dd^{c}v_{1}\wedge...\wedge dd^{c}v_{q}\wedge \beta^{n-m}\wedge gS\wedge (dd^{c}\rho)^{m-p-q+1} $$ As the characteristic function $ \1_{\Omega_{0}} $ is the limit of an increasing sequence of such function $ g $, it is not difficult to deduce that $$\int_{\Omega_{0}}\Theta\wedge (dd^{c}\rho)^{m-p-q+1}\geqslant \int_{\Omega_{0}} dd^{c}v_{1}\wedge...\wedge dd^{c}v_{q}\wedge \beta^{n-m}\wedge S\wedge  (dd^{c}\rho)^{m-p-q+1}.$$ It follows that the positive current $E= dd^{c}v_{1}\wedge...\wedge dd^{c}v_{q}\wedge \beta^{n-m}\wedge S-\Theta$ of bidimension $(m-p-q+1,m-p-q+1)$,  satisfies $dd^{c} E =0$. Also for all points in a neighborhood of  $ \Omega_{0}\smallsetminus \overline {\Omega}_{\delta}$,  the functions $ v_{k}$ are bounded  and we have $$dd^{c}v_{1}\wedge...\wedge dd^{c}v_{q}\wedge\beta^{n-m}\wedge S=dd^{c}v_{1}^{j}\wedge...\wedge dd^{c}v_{q}^{j}\wedge\beta^{n-m}\wedge S=(dd^{c}(\varphi+A\rho))^{q}\wedge \beta^{n-m}\wedge S,$$ which implies that $E=0$ near the point. Hence, $E$ has a compact support and therefore $E=0$ in $\Omega_{0}$. Then, $dd^{c}v_{1}^{j}\wedge...\wedge dd^{c}v_{q}^{j}\wedge \beta^{n-m}\wedge S $ converges weakly to $dd^{c}v_{1} \wedge...\wedge dd^{c}v_{q} \wedge\beta^{n-m}\wedge S$. Since in $\Omega_{\delta}$, $v_{k}^{j}=u_{k}^{j}$  and $v_{k}=u_{k}$ then $dd^{c}u_{1}^{j}\wedge...\wedge dd^{c}u_{q}^{j}\wedge\beta^{n-m}\wedge S$ converges weakly to $dd^{c}u_{1}\wedge...\wedge dd^{c}u_{q}\wedge\beta^{n-m}\wedge S$. Therefore by using  Theorem 2 in \cite{ref9}, we deduce the convergence of $ dd^{c}u_{1}^{j}\wedge...\wedge dd^{c}u_{q}^{j}\wedge \beta^{n-m}\wedge U$ to $ dd^{c}u_{1}\wedge...\wedge dd^{c}u_{q}\wedge \beta^{n-m}\wedge U$.\end{proof}
A family of $m$-sh functions on $\Omega$, $u_{1},...,u_{q}$, are said satisfying the condition ${\mathscr C}_T$, if $\Omega$ is covered by $(\Omega_s)_s$, where $\Omega_s$ is strictly pseudoconvex, $\Omega_s\Subset\Omega$ and ${\mathscr L}(u_{k})\cap\partial\Omega_s\cap \Supp T=\emptyset$ for all $s,k$. Now, based on Theorem A, we are prepared to prove our main result in this section which is the counterpart in the complex hessian context of the main result of Ben Messaoud-El Mir \cite{me-el}.
\vskip0.15cm
\noindent{\bf Theorem B.} {\it Assume that $T\in {\mathscr M}_{p}^{m}(\Omega)$ and $u_{0},u_{1},...,u_{q}$ are $m$-sh functions on $\Omega$, satisfying condition ${\mathscr C}_T$. If the current $dd^{c}u_{0}\wedge...\wedge dd^{c}u_{q}\wedge\beta^{n-m}$ is well defined and if $u_{0}^{j},...,u_{q}^{j}$ are decreasing sequences of $m$-sh functions such that $ u_{k}^{j} $ converges pointwise to $u_{k}$, then, we have:
\begin{enumerate}
\item[(1)] $ dd^{c}u_{1}^{j}\wedge...\wedge dd^{c}u_{q}^{j}\wedge\beta^{n-m}\wedge T_{j}\L   dd^{c}u_{1}\wedge...\wedge dd^{c}u_{q}\wedge\beta^{n-m}\wedge T$ weakly on $\Omega$.
\item[(2)]  For all  $1\leqslant q< m-p$, we have: $$ u_{0}^{j} dd^{c}u_{1}^{j}\wedge...\wedge dd^{c}u_{q}^{j}\wedge \beta^{n-m}\wedge T_{j}\L  u_{0} dd^{c}u_{1}\wedge...\wedge dd^{c}u_{q}\wedge\beta^{n-m}\wedge T\  weakly\ on\ \Omega.$$
\end{enumerate}}
\begin{rem} Note that the second statement of Theorem B is far form being true when $q=m-p$. Indeed, assume that $T=1$ and  $u_0=...=u_{m}=\v_m(z)=-|z|^{-2\left(\frac{n}{m}-1\right)}$. It is well-known that $\v_m$ is $m$-sh and a straightforward computation yields:
$(dd^c \v_m)^{m}\wedge\beta^{n-m}=C_{n,m}\delta_0.\beta^n,$
where $C_{n,m}$ is a constant depending on $n$ and $m$ and $\delta_0$ is the Dirac measure on $0$. In particular, it is clear that the current $\v_m(dd^c \v_m)^{m}\wedge\beta^{n-m}$ hasn't a finite locally masses near $0$.
\end{rem}
\begin{proof}
(1)  Let $ \Omega_ {0} \Subset \Omega $ be a  strictly pseudoconvex subset as in the statement of Theorem A. By the weak continuity of $ dd^{c} $, Lemma 2.7 in \cite{me-el} and Theorem A, we have the weak convergence
$$ dd^{c}u_{1}^{j}\wedge...\wedge dd^{c}u_{q}^{j}\wedge \beta^{n-m}\wedge T_{j}+ dd^{c}u_{1}^{j}\wedge...\wedge dd^{c}u_{q}^{j}\wedge \beta^{n-m}\wedge R_{j}\L dd^{c}u_{1}\wedge...\wedge dd^{c}u_{q}\wedge \beta^{n-m}\wedge dd^cU.$$
As $R$ is smooth on  ${\Omega}_{0}$ and the current $dd^{c}u_{1}\wedge...\wedge dd^{c}u_{q}\wedge \beta^{n-m} $ is well defined, the weak convergence $ dd^{c}u_{1}^{j}\wedge...\wedge dd^{c}u_{q}^{j}\wedge \beta^{n-m}\wedge R\L  dd^{c}u_{1}\wedge...\wedge dd^{c}u_{q}\wedge \beta^{n-m}\wedge R$ was also guaranteed. Since, $ dd^{c}u_{1}^{j}\wedge...\wedge dd^{c}u_{q}^{j}\wedge \beta^{n-m}\wedge T_{j}+ dd^{c}u_{1}^{j}\wedge...\wedge dd^{c}u_{q}^{j}\wedge \beta^{n-m}\wedge(R_{j}-R)$ converges weakly to $  dd^{c}u_{1}\wedge...\wedge dd^{c}u_{q}\wedge \beta^{n-m}\wedge T$ where $R_{j}-R\in {\mathscr C}^{\infty}(\overline{\Omega}_{0})$ and converges uniformly to $0$. Hence, we deduce the weak convergence $ dd^{c}u_{1}^{j}\wedge...\wedge dd^{c}u_{q}^{j}\wedge \beta^{n-m}\wedge T_{j}\L   dd^{c}u_{1}\wedge...\wedge dd^{c}u_{q}\wedge \beta^{n-m}\wedge T.$\\
(2) Consider the functions $v_{0},...,v_{q}$ and  $v_{0}^{j},...,v_{q}^{j}$ obtained  by the procedure of the max used in the proof of Theorem A. Let $\Theta$ be a weak limit of $(v_{0}^{j} dd^{c}v_{1}^{j}\wedge...\wedge dd^{c}v_{q}^{j}\wedge \beta^{n-m}\wedge T_{j})_{j}$. By regularizing $v_{0}$ and $v_{0}^{j}$, a simple computation gives $\Theta\leqslant v_{0} dd^{c}v_{1}\wedge...\wedge dd^{c}v_{q}\wedge \beta^{n-m}\wedge T$. The closed positive current $E=v_{0} dd^{c}v_{1}\wedge...\wedge dd^{c}v_{q}\wedge \beta^{n-m}\wedge T-\Theta$ is of bidimension $(m-p-q,m-p-q)$. Near $ \partial\Omega_{0} $, the functions $ v_{k} $ are bounded, then $ E $ has  a compact support in $ \Omega_ {0}$. Now according to the previous argument, we obtain that  $ dd^{c}E=0$, and therefore $E=0$.
\end{proof}
Now, we end this subsection by extending the previous results for the class $\delta\cal {SH}_m(\Omega)$. Recall that a function $u$ belongs to $\delta\cal {SH}_m(\Omega)$ if there exists  $u_{1},u_2\in \cal {SH}_m(\Omega)$ such that $u=u_1-u_2$ on $\Omega$. For $u\in \delta \cal {SH}_m(\Omega)$, we say that the complex Hessian operator $(dd^cu)^q\wedge\beta^{n-m}$ is well defined if for all $0\leqslant s\leqslant q\leqslant m$, $(dd^{c}u_{1})^{s}\wedge (dd^{c}u_{2})^{q-s}\wedge \beta^{n-m}$ is well defined (see Definition 2) and therefore, we put
$$(dd^cu)^q\wedge\beta^{n-m}=\sum\limits_{s=0}^q(-1)^{q-s}
\binom{q}{s}(dd^cu_1)^{s}\wedge (dd^{c}u_2)^{q-s}\wedge\beta^{n-m}.$$
 \begin{pro}
 Assume that  $T\in \mathscr{M}_{p}^{m}(\Omega)$, $\Omega_0\Subset\Omega$ is strictly pseudoconvex and $u=u_1-u_2\in \delta\cal{SH}_m(\overline{\Omega}_0)$ such that ${\mathscr L}(u_i)\cap\Supp T\cap \partial\Omega_{0}=\emptyset$ for $i=1,2$ and $(dd^{c}u)^q\wedge\beta^{n-m}$ is well defined. Let $(u_j=u_{j,1}-u_{j,2})_j\subset \delta\cal{SH}_m(\overline{\Omega}_0)$ where $u_{j,1}$, $u_{j,2}$ are decreasing sequence respectively to $u_1,u_2$. Then for all $q\leqslant m-p+1$, we have
 \begin{enumerate}
\item $(dd^{c}u)^q\wedge\beta^{n-m}\wedge U_j$ converges weakly on $\Omega_0$. We denote this limit by $(dd^{c}u)^q\wedge\beta^{n-m}\wedge U.$
\item If $q< m-p+1$, then
\begin{enumerate}
\item $(dd^{c}u_{j})^q\wedge\beta^{n-m}\wedge U$ converges weakly to $(dd^{c}u)^q\wedge\beta^{n-m}\wedge U$ on $\Omega_0.$
\item $(dd^{c}u_{j})^q\wedge\beta^{n-m}\wedge U_{j}$ converges weakly to $(dd^{c}u)^q\wedge\beta^{n-m}\wedge U$ on $\Omega_0.$
\end{enumerate}
\end{enumerate}
 \end{pro}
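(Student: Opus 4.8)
The plan is to reduce Proposition 5 to Theorem A by the standard polarization/linearity trick for $\delta$-$m$-sh functions. Write $u=u_1-u_2$ and $u_j=u_{j,1}-u_{j,2}$, and expand
$$(dd^{c}u)^q\wedge\beta^{n-m}\wedge U_j=\sum_{s=0}^{q}(-1)^{q-s}\binom{q}{s}(dd^{c}u_1)^{s}\wedge(dd^{c}u_2)^{q-s}\wedge\beta^{n-m}\wedge U_j.$$
Since the hypothesis ``$(dd^{c}u)^q\wedge\beta^{n-m}$ is well defined'' means, by definition, that each mixed term $(dd^{c}u_1)^{s}\wedge(dd^{c}u_2)^{q-s}\wedge\beta^{n-m}$ is well defined, I can apply Theorem A to the family $u_1,\dots,u_1,u_2,\dots,u_2$ ($s$ copies of $u_1$ and $q-s$ copies of $u_2$, with $q\leqslant m-p+1$), using the approximating sequences $u_{j,1}$ and $u_{j,2}$ respectively. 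Statement (1) of Theorem A gives that $(dd^{c}u_1)^{s}\wedge(dd^{c}u_2)^{q-s}\wedge\beta^{n-m}\wedge U_j$ converges weakly on $\Omega_0$; summing with the signed binomial coefficients yields the weak convergence of $(dd^{c}u)^q\wedge\beta^{n-m}\wedge U_j$, and we define $(dd^{c}u)^q\wedge\beta^{n-m}\wedge U$ to be this limit, which is automatically consistent with the same binomial expansion applied to the limits. One must check that $q\leqslant m-p+1$ is exactly the range in which Theorem A part (1) applies, and that the condition ${\mathscr L}(u_i)\cap\partial\Omega_0\cap\Supp T=\emptyset$ for $i=1,2$ supplies precisely the hypothesis ${\mathscr L}(u_k)\cap\partial\Omega_0\cap\Supp T=\emptyset$ needed in Theorem A for each of the functions appearing in the list.

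For part (2) with $q<m-p+1$, the argument is the same but now invoking parts (2)(a) and (2)(b) of Theorem A, which are valid in the strict range $q<m-p+1$. For (2)(a) I expand $(dd^{c}u_j)^q\wedge\beta^{n-m}\wedge U$ via the binomial formula in $u_{j,1},u_{j,2}$; Theorem A(2)(a) applied to each list of $s$ copies of $u_1$ and $q-s$ copies of $u_2$ gives
$$(dd^{c}u_{j,1})^{s}\wedge(dd^{c}u_{j,2})^{q-s}\wedge\beta^{n-m}\wedge U\longrightarrow (dd^{c}u_1)^{s}\wedge(dd^{c}u_2)^{q-s}\wedge\beta^{n-m}\wedge U$$
weakly on $\Omega_0$, and summing recovers the claim. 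For (2)(b) I use Theorem A(2)(b) term by term in exactly the same way, obtaining weak convergence of $(dd^{c}u_{j,1})^{s}\wedge(dd^{c}u_{j,2})^{q-s}\wedge\beta^{n-m}\wedge U_j$ to the corresponding limit, and then sum.

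The only genuine subtlety — and the step I would be most careful about — is the bookkeeping that the ``well defined'' hypotheses transfer correctly: Definition 2 (and its $\delta$-version just above Proposition 5) require the inductive integrability conditions for \emph{every} prefix of the list, not merely for the full product, so I should note that applying Theorem A to the list consisting of $s$ copies of $u_1$ followed by $q-s$ copies of $u_2$ is legitimate because the well-definedness of $(dd^{c}u_1)^{s}\wedge(dd^{c}u_2)^{q-s}\wedge\beta^{n-m}$ already encodes the well-definedness of all its sub-products. A second minor point is that Theorem A is stated for a single monotone family, so to handle a mixed list I simply relabel: set $w_1=\dots=w_s=u_1$, $w_{s+1}=\dots=w_q=u_2$, with $w_l^{j}$ defined analogously, which is an admissible input to Theorem A. Everything else is routine linearity of the weak topology, so no further obstacle arises. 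This also shows the limit current $(dd^{c}u)^q\wedge\beta^{n-m}\wedge U$ does not depend on the chosen decomposition $u=u_1-u_2$, by the usual argument comparing two decompositions.
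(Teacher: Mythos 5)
Your proposal is correct and follows essentially the same route as the paper: expand $(dd^cu)^q\wedge\beta^{n-m}\wedge U_j$ (and the other products) binomially in $u_1,u_2$ and apply Theorem A, parts (1), (2)(a) and (2)(b), term by term to the mixed lists of copies of $u_1$ and $u_2$, then sum by linearity of weak convergence. Your extra remarks on the transfer of the well-definedness hypothesis and on independence of the decomposition are harmless refinements of the same argument.
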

\begin{proof}
(1) Since $U_j$ is smooth, then
$$(dd^cu)^q\wedge\beta^{n-m}\wedge U_j=\sum\limits_{s=0}^q(-1)^{q-s}\binom{q}{s}(dd^cu_1)^{s}\wedge (dd^{c}u_2)^{q-s}\wedge\beta^{n-m}\wedge U_j.$$
Thanks to Theorem A, the sequence $(dd^cu_1)^{s}\wedge (dd^{c}u_2)^{q-s}\wedge\beta^{n-m}\wedge U_j $ converges weakly to $(dd^cu_1)^{s}\wedge (dd^{c}u_2)^{q-s}\wedge\beta^{n-m}\wedge U$, then the sequence $(dd^cu)^q\wedge\beta^{n-m}\wedge U_j$ converges to a limit denoted by $ (dd^cu)^q\wedge\beta^{n-m}\wedge U$ such that
 $$(dd^cu)^q\wedge\beta^{n-m}\wedge U=\sum\limits_{s=0}^q(-1)^{q-s}\binom{q}{s}(dd^cu_1)^{s}\wedge (dd^{c}u_2)^{q-s}\wedge\beta^{n-m}\wedge U.$$
(2) (a) In view of $(1)$, we have
$$(dd^cu_j)^q\wedge\beta^{n-m}\wedge U= \sum\limits_{s=0}^q (-1)^{q-s}\binom{q}{s} (dd^{c}u_{j,1})^s\wedge (dd^cu_{j,2})^{q-s}\wedge\beta^{n-m}\wedge U.$$
Once again, Theorem A imply that $(dd^{c}u_{j,1})^s\wedge (dd^cu_{j,2})^{q-s}\wedge\beta^{n-m}\wedge U$ converges weakly to $(dd^{c}u_{1})^s\wedge (dd^cu_{2})^{q-s}\wedge\beta^{n-m}\wedge U$. It follows that
$(dd^cu_j)^q\wedge\beta^{n-m}\wedge U$ converges weakly to $$\sum\limits_{s=0}^q (-1)^{q-s} \binom{q}{s}(dd^{c}u_{1})^s\wedge (dd^cu_{2})^{q-s}\wedge\beta^{n-m}\wedge U=(dd^cu)^q\wedge\beta^{n-m}\wedge U.$$
(b) By statement (b) in Theorem A and in view of (1), it is not difficult to deduce that the sequence $(dd^cu_j)^q\wedge\beta^{n-m}\wedge U_j$ converges weakly to $(dd^cu)^q\wedge\beta^{n-m}\wedge U$. \end{proof}
\ As an immediate consequence of Proposition 4 and by the same lines of proof of Theorem B, it is not hard to deduce the following version for the class $\delta\cal{SH}_m(\Omega)$.
\begin{thm}
Assume that  $T\in \mathscr{M}_{p}^{m}(\Omega)$, $\Omega_0\Subset\Omega$ is strictly pseudoconvex and $u=u_1-u_2\in \delta\cal{SH}_m(\overline{\Omega}_0)$ such that ${\mathscr L}(u_i)\cap\Supp T\cap \partial\Omega_{0}=\emptyset$ for $i=1,2$ and  $(dd^{c}u)^q\wedge\beta^{n-m}$ is well defined. Let $(u_j=u_{j,1}-u_{j,2})_j\subset \delta\cal{SH}_m(\overline{\Omega}_0)$ where $u_{j,1}$, $u_{j,2}$ are decreasing sequence respectively to $u_1,u_2$. Then for all $ 1\leqslant q\leqslant m-p$, we have \begin{enumerate}
\item $(dd^cu_{j})^q\wedge\beta^{n-m}\wedge T_j\rightarrow (dd^cu)^q\wedge\beta^{n-m}\wedge T $ weakly on $\Omega_0$.
\item If $ q< m-p$, $u_j(dd^cu_j)^q\wedge \beta^{n-m}\wedge T_j \rightarrow u(dd^cu)^q\wedge \beta^{n-m}\wedge T$ weakly on $\Omega_0$.
\end{enumerate}
\end{thm}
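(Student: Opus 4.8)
The plan is to reduce everything to the $m$-subharmonic statements already established --- Theorem A, Theorem B and Proposition 4 --- by writing $u=u_1-u_2$, $u_j=u_{j,1}-u_{j,2}$ and using the binomial identity
$$(dd^cw)^q\wedge\beta^{n-m}=\sum_{s=0}^q(-1)^{q-s}\binom{q}{s}(dd^cw_1)^s\wedge(dd^cw_2)^{q-s}\wedge\beta^{n-m}$$
for $w=w_1-w_2\in\delta\mathcal{SH}_m$ with all occurring products well defined, so that every current below becomes a finite alternating sum of currents built from honest $m$-sh functions. One then repeats, almost verbatim, the two steps of the proof of Theorem B.

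For part $(1)$, I would start from Proposition 4$(2)(b)$, which gives $(dd^cu_j)^q\wedge\beta^{n-m}\wedge U_j\longrightarrow(dd^cu)^q\wedge\beta^{n-m}\wedge U$ weakly on $\Omega_0$. The current $(dd^cu_j)^q\wedge\beta^{n-m}$ is closed (an alternating sum of the closed currents $(dd^cu_{j,1})^s\wedge(dd^cu_{j,2})^{q-s}\wedge\beta^{n-m}$), so $dd^c$ may be passed through; and Lemma 2.7 of \cite{me-el} writes $dd^cU_j=\eta T_j+R_j$ on $\Omega_0$ (where $\eta\equiv1$) with $(R_j)_j$ smooth, of uniformly bounded coefficients, converging uniformly to a smooth $R$. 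Hence
$$(dd^cu_j)^q\wedge\beta^{n-m}\wedge T_j+(dd^cu_j)^q\wedge\beta^{n-m}\wedge R_j\longrightarrow dd^c\bigl((dd^cu)^q\wedge\beta^{n-m}\wedge U\bigr).$$
Since the well-definedness hypothesis yields $(dd^cu_j)^q\wedge\beta^{n-m}\to(dd^cu)^q\wedge\beta^{n-m}$ weakly with locally uniformly bounded masses, and $R_j-R\to0$ uniformly, the $R_j$-term converges to $(dd^cu)^q\wedge\beta^{n-m}\wedge R$; subtracting, and recalling that by definition $(dd^cu)^q\wedge\beta^{n-m}\wedge T=dd^c\bigl((dd^cu)^q\wedge\beta^{n-m}\wedge U\bigr)-(dd^cu)^q\wedge\beta^{n-m}\wedge R$, gives $(1)$. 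This is the exact mechanism of the proof of Theorem B$(1)$ and should present no difficulty.

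For part $(2)$ (with $q<m-p$) I would mimic the proof of Theorem B$(2)$. Fix a strictly psh defining function $\rho$ of $\Omega_0$, set $\Omega_\delta=\{\rho<-\delta\}$ with $\delta$ small enough that $\mathscr L(u_i)\cap\Supp T\subset\Omega_\delta$ for $i=1,2$, and apply the $\max$-truncation from the proof of Theorem A to replace $u_1,u_2$ (resp.\ $u_{j,1},u_{j,2}$) by $m$-sh functions $v_1,v_2$ (resp.\ $v_1^j,v_2^j$) that coincide with the originals on $\Omega_\delta$ and are locally bounded near $\partial\Omega_0$; put $v=v_1-v_2$, $v^j=v_1^j-v_2^j$. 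Since $v=u$, $v^j=u_j$ on $\Omega_\delta$ and $\Omega_\delta\uparrow\Omega_0$, it suffices to prove $v^j(dd^cv^j)^q\wedge\beta^{n-m}\wedge T_j\longrightarrow v(dd^cv)^q\wedge\beta^{n-m}\wedge T$ weakly on $\Omega_0$. Let $\Theta$ be a weak limit of the left-hand side; expanding by the binomial identity, each summand is a weighted product of $m$-sh functions of the type treated in Theorem B$(2)$ (all locally bounded near $\partial\Omega_0\cap\Supp T$), so regularizing the weight and using $(1)$ gives the one-sided bound $\Theta\leqslant v(dd^cv)^q\wedge\beta^{n-m}\wedge T$. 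Then $E:=v(dd^cv)^q\wedge\beta^{n-m}\wedge T-\Theta$ is a positive current of bidimension $(m-p-q,m-p-q)$ with compact support in $\Omega_0$, because $v,v^j$ are bounded near $\partial\Omega_0$ where $E$ coincides with the trivial limit $0$. Applying $dd^c$ and using $(1)$ with $q+1$ differentiations to evaluate $dd^c\Theta=dd^c\bigl(v(dd^cv)^q\wedge\beta^{n-m}\wedge T\bigr)$, one gets $dd^cE=0$; a positive closed current of compact support vanishes, so $E=0$. Restricting to $\Omega_\delta$ and letting $\delta\downarrow0$ concludes.

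The main obstacle, as in Theorem B, is the handling of the weight in part $(2)$: regularizing the weight produces only the inequality $\Theta\leqslant v(dd^cv)^q\wedge\beta^{n-m}\wedge T$, so the reverse must be squeezed out of the closedness and compact support of the error current $E$. The accompanying point to watch is that the weighted currents carry one extra $dd^c$, so the Hessian products of degree $q+1$ appearing in $dd^cE$ must be well defined; this is precisely where the $\max$-truncation (which makes the $v_i$ bounded away from a neighbourhood of $\mathscr L(u_i)\cap\Supp T$) and the well-definedness hypothesis enter --- and it becomes automatic whenever well-definedness is granted through one of the sufficient conditions of Example 1, since those give well-definedness of all products of degree $\le m-p+1$ at once.
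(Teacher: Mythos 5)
Your overall strategy --- expand $(dd^cu)^q$ by the binomial identity into mixed products of the honest $m$-sh components and feed everything into Proposition~4 and the machinery of Theorems A and B --- is exactly what the paper does (its ``proof'' is literally the sentence that the theorem is an immediate consequence of Proposition~4 and the same lines as Theorem~B). Your part $(1)$ is correct and is the mechanism of Theorem~B$(1)$: pass $dd^c$ through the closed current $(dd^cu_j)^q\wedge\beta^{n-m}$, use $dd^cU_j=\eta T_j+R_j$, and subtract the $R_j$-term using the uniform convergence $R_j\to R$ together with the weak convergence and locally bounded masses of $(dd^cu_j)^q\wedge\beta^{n-m}$ granted by the well-definedness hypothesis.

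Part $(2)$ as you wrote it, however, contains a genuine gap. The step ``regularizing the weight \dots gives the one-sided bound $\Theta\leqslant v(dd^cv)^q\wedge\beta^{n-m}\wedge T$'' does not survive the passage from $m$-sh data to $\delta\mathcal{SH}_m$ data. The monotonicity argument of Theorem~B$(2)$ rests on two facts: the unweighted current is \emph{positive} and the weight is a single decreasing sequence of negative functions. Here $v^j(dd^cv^j)^q\wedge\beta^{n-m}\wedge T_j$ is an \emph{alternating} sum $\sum_s(-1)^{q-s}\binom{q}{s}\bigl(v_1^j-v_2^j\bigr)(dd^cv_1^j)^s\wedge(dd^cv_2^j)^{q-s}\wedge\beta^{n-m}\wedge T_j$; one-sided bounds on the individual summands cannot be added across the signs $(-1)^{q-s}$, so you do not obtain $\Theta\leqslant v(dd^cv)^q\wedge\beta^{n-m}\wedge T$, hence you do not know that $E$ is positive, and the conclusion ``a positive closed current of compact support vanishes'' has nothing to bite on (a merely closed compactly supported current need not vanish). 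The repair is in fact simpler than the argument you propose: apply Theorem~B$(2)$ \emph{separately} to each summand $v_i^j(dd^cv_1^j)^s\wedge(dd^cv_2^j)^{q-s}\wedge\beta^{n-m}\wedge T_j$ (weight a single $m$-sh function, Hessian factors honest $m$-sh functions, condition $\mathscr C_T$ satisfied by hypothesis); each summand then converges to the corresponding term of $v(dd^cv)^q\wedge\beta^{n-m}\wedge T$, and $(2)$ follows by linearity, with no need for the positivity/compact-support device at all. Your closing caveat is the right one and applies to this repaired argument too: Theorem~B$(2)$ for each summand requires well-definedness of the degree-$(q+1)$ mixed products $dd^cv_i\wedge(dd^cv_1)^s\wedge(dd^cv_2)^{q-s}\wedge\beta^{n-m}$, which the stated hypothesis (well-definedness at degree $q$ only) does not literally provide; this is a defect of the theorem's hypotheses as printed rather than of your argument, and it disappears under either sufficient condition of Example~1.
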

\subsection{Convergence in capacity}
Until the work of Xing \cite{ref21}, convergence in capacity becomes an effective tools in studying convergence of complex and complex Hessian operators. It is well known that a decreasing sequence of locally bounded $m$-sh functions $(u_j)_j$, which converges to  $u$, it converges to $ u$ in $ cap_{m,T} $  on every $E\Subset \Omega$. This means that for every $\delta>0,$ $ \lim\limits_{j\rightarrow +\infty}cap_{m,T}\left(E\cap\{|u_{j}-u|\geqslant\delta\}\right)=0$. For every Borel subset $E\subset\Omega,$ let's define

$cap_{m-1,T}(E)=\sup\{\int_{E}(dd^{c}v)^{m-p-1}\wedge\beta^{n-m+1}\wedge T, v\in \cal{SH}_m(\Omega),\   -1\leqslant v\leqslant 0\}. $
\vskip0.1cm
\noindent Assuming that $\Omega$ is bounded, it is not hard to see that $cap_{m-1,T}$ is dominated from above by $cap_{m,T}$. Therefore, convergence in $cap_{m,T}$-capacity leads to convergence in $cap_{m-1,T}$-capacity. In this way, we get the following result which generalizes the one given by Xing \cite{ref21} for the particular cases $m=n$ and $T=1$.
\begin{thm}
Let $T\in {\mathscr C}_{p}^{m}(\Omega)$ and $(u_{j})_{j}$ be a sequence of locally uniformly bounded $m$-sh functions. Suppose that $u\in \cal{SH}_{m}(\Omega)\cap L^{\infty}(\Omega)$ and $u_{j}\rightarrow u$ in $cap_{m-1,T}$ then for all $q\leqslant m-p,$ $$(dd^{c}u_{j})^{q}\wedge\beta^{n-m}\wedge T\L (dd^{c}u)^{q}\wedge\beta^{n-m}\wedge T\ weakly \ on \ \Omega.$$\end{thm}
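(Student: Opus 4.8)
The plan is to induct on $q$ (the case $q=0$ being trivial) and, at each step, to reduce the statement to the weak convergence $u_jS_j\to uS$, where $S_j:=(dd^cu_j)^{q-1}\wedge\beta^{n-m}\wedge T$ and $S:=(dd^cu)^{q-1}\wedge\beta^{n-m}\wedge T$: since $S_j$ is closed and $m$-positive, $dd^c(u_jS_j)=(dd^cu_j)^q\wedge\beta^{n-m}\wedge T$, so the weak continuity of $dd^c$ then finishes the step. First I would localize. The assertion is local, $(u_j)_j$ is locally uniformly bounded, and the $(m-1,T)$-capacity is locally comparable for nested relatively compact domains; hence, after an affine normalization (which leaves convergence in $cap_{m-1,T}$ unaffected), I may assume $-1\leqslant u_j\leqslant 0$ and $-1\leqslant u\leqslant 0$ on the bounded set $\Omega$. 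By the Chern--Levine--Nirenberg inequality (inequality $1.2.8$ of \cite{ref13}) the masses $\int_K S_j\wedge\beta^{m-p-q+1}$ are then uniformly bounded on each $K\Subset\Omega$.

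The technical core is a comparison of intermediate capacities. For $p\leqslant k\leqslant m$ set $cap_{k,T}(E)=\sup\{\int_E(dd^cv)^{k-p}\wedge\beta^{n-k}\wedge T:\ v\in\mathcal{SH}_m(\Omega),\ -1\leqslant v\leqslant 0\}$, so that $cap_{m,T}$, $cap_{m-1,T}$ are the cases $k=m,m-1$ and $cap_{p,T}=\sigma_T$. Writing $\beta=R^2\,dd^c\rho_0$ on a ball $B(0,R)\supset\Omega$, with $\rho_0=(|z|^2-R^2)/R^2\in[-1,0[$ being $m$-sh, applying the multinomial expansion of $(dd^cw)^{k+1-p}$ to the admissible competitor $w:=\big((k-p)v+\rho_0\big)/(k-p+1)$, and retaining only the monomial in which $\rho_0$ occurs once, Lemma~1 and Definition~1 (which guarantee that all the currents and monomials involved are well defined and positive) yield $cap_{k,T}\leqslant C\,cap_{k+1,T}$; chaining these gives $cap_{k,T}\leqslant C\,cap_{m-1,T}$ for all $p\leqslant k\leqslant m-1$. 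Consequently, since each $u_j$ (and $u$) is itself an admissible competitor, for every Borel $E\subset\Omega$ and every $0\leqslant s\leqslant m-p-1$,
$$\int_E(dd^cu_j)^{s}\wedge\beta^{n-p-s}\wedge T\ \leqslant\ cap_{s+p,T}(E)\ \leqslant\ C\,cap_{m-1,T}(E)$$
and likewise for $u$; in particular the measures $(dd^cu_j)^{s}\wedge\beta^{n-p-s}\wedge T$ and $(dd^cu)^{s}\wedge\beta^{n-p-s}\wedge T$ are uniformly absolutely continuous with respect to $cap_{m-1,T}$.

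Now I would run the induction. Fix $q\geqslant 1$; by the inductive hypothesis $S_j\to S$ weakly (with $S_j=S=\beta^{n-m}\wedge T$ when $q=1$). Test against a smooth positive form $\psi$ of bidegree $(m-p-q+1,m-p-q+1)$ and split $u_jS_j-uS=(u_j-u)S_j+u(S_j-S)$. For the first summand, after bounding $|\psi|\leqslant c\,\beta^{m-p-q+1}$ on $\Supp\psi$ and splitting the integral along $\{|u_j-u|<\delta\}$ and its complement, the good set contributes $O(\delta)$ uniformly in $j$ (by the uniform mass bound), while the bad set contributes at most $2c\int_{\{|u_j-u|\geqslant\delta\}}(dd^cu_j)^{q-1}\wedge\beta^{n-p-q+1}\wedge T\leqslant 2cC\,cap_{m-1,T}\big(\Supp\psi\cap\{|u_j-u|\geqslant\delta\}\big)\to 0$; hence $(u_j-u)S_j\to 0$. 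For the second summand, a bounded $m$-sh function is quasicontinuous for $cap_{m,T}$ (by \cite{ref9}), hence for $cap_{m-1,T}$ since $cap_{m-1,T}\leqslant C\,cap_{m,T}$; given $\varepsilon>0$, pick an open $G$ with $cap_{m-1,T}(G)<\varepsilon$ and a continuous, compactly supported $\tilde u$ with $\|\tilde u\|_\infty\leqslant 1$ agreeing with $u$ off $G$. Then $\tilde u(S_j-S)\to 0$ by weak convergence, and $|\langle(u-\tilde u)(S_j-S),\psi\rangle|\leqslant 2c\big(\int_GS_j\wedge\beta^{m-p-q+1}+\int_GS\wedge\beta^{m-p-q+1}\big)\leqslant 4cC\,\varepsilon$ by the uniform absolute continuity above; letting $\varepsilon\to 0$ gives $u(S_j-S)\to 0$. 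Hence $u_jS_j\to uS$ weakly, and applying $dd^c$ closes the induction.

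The step I expect to be the main obstacle is exactly the capacity comparison $cap_{k,T}\leqslant C\,cap_{m-1,T}$ and the ensuing uniform absolute continuity, because the measures $(dd^cu_j)^{q-1}\wedge\beta^{n-p-q+1}\wedge T$ carry strictly fewer $dd^c$-factors than those built into $cap_{m-1,T}$; the substitution $\beta=R^2\,dd^c\rho_0$ together with the multinomial expansion and the $m$-positivity bookkeeping of Lemma~1 and Definition~1 is what makes this go through, and it is also where the boundedness of $\Omega$ is used. Everything else is the standard Xing-type splitting of \cite{ref21}, adapted as in \cite{ref9}.
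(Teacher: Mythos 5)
Your proof is correct and follows essentially the same route as the paper's: induction on $q$ reducing to $u_jS_j\to uS$, the split into $(u_j-u)S_j$ and $u(S_j-S)$ handled respectively by the convergence in $cap_{m-1,T}$ and by quasicontinuity, with the key point being the domination of the intermediate measures $(dd^cu_j)^{s}\wedge\beta^{n-p-s}\wedge T$ by $cap_{m-1,T}$. The paper obtains that domination in one step via the competitor $dd^c(u_j+|z|^2)$ raised to the power $m-p-1$, while you chain the comparisons $cap_{k,T}\leqslant C\,cap_{k+1,T}$ through $\beta=R^2\,dd^c\rho_0$; these are the same Chern--Levine--Nirenberg-type estimate.
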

\begin{proof}
We shall prove by induction that for each $k\leqslant m-p,$ $(dd^{c}u_{j})^{k}\wedge\beta^{n-m}\wedge T$ converges weakly to $(dd^{c}u)^{k}\wedge\beta^{n-m}\wedge T$. For $k=1,$  the convergence assumption implies that $u_{j}\rightarrow u$ in $L^{1}_{loc}(\sigma_{T})$. Indeed, for every $K\Subset \Omega, \delta>0$, since $\Omega$ is bounded there exists a constant $C>0$ not depending on $j$ such that $$\begin{array}{lcl}
\ds\int_{K}|u_{j}-u|\beta^{n-p}\wedge T&=&\ds\int_{K\cap\{|u_{j}-u|\geqslant \delta\}}|u_{j}-u|\beta^{m-p-1}\wedge\beta^{n-m+1}\wedge T+\ds\int_{K\cap\{|u_{j}-u|< \delta\}}|u_{j}-u|\beta^{n-p}\wedge T\\\\&\leqslant& C\|u_{j}-u \|_{L^{\infty}(K)}cap_{m-1,T}(K\cap\{|u_{j}-u|\geqslant \delta\})+\delta \sigma_{T}(K)
\end{array}$$ then we deduce the result by letting $j\l +\infty$ and by arbitrariness of $\delta$. Hence, it follows that $dd^{c}u_{j}\wedge\beta^{n-m}\wedge T$ converges weakly to $dd^{c}u\wedge\beta^{n-m}\wedge T$. Assume that Theorem 3 is true for all $k=q< m-p$ and we shall prove that $u_{j}(dd^{c}u_{j})^{q}\wedge \beta^{n-m}\wedge T$ converges weakly to $u(dd^{c}u)^{q}\wedge \beta^{n-m}\wedge T$, which implies the statement for $k=q+1$. Thanks to the quasi-continuity of $u$ with respect to $cap_{m,T}$ (see \cite{ref9}), for each $\varepsilon>0$, $u$ can be written as $u =\phi + \psi$
on $\Omega$, where  $ \phi$ is continuous,  $\psi = 0$ outside an open subset $G\subset \Omega$
 with  $cap_{m,T}(G)< \varepsilon$, and the supremum norm of $ \psi $ depends only on the function $u$. We have $$\begin{array}{lcl}
 u_{j}(dd^{c}u_{j})^{q}\wedge\beta^{n-m}\wedge T-u(dd^{c}u)^{q}\wedge\beta^{n-m}\wedge T&=&(u_{j}-u)(dd^{c}u_{j})^{q}\wedge\beta^{n-m}\wedge T\\&+& \psi\left[(dd^{c}u_{j})^{q}\wedge\beta^{n-m}\wedge T-(dd^{c}u)^{q}\wedge\beta^{n-m}\wedge T\right]\\&+& \phi\left[(dd^{c}u_{j})^{q}\wedge\beta^{n-m}\wedge T-(dd^{c}u)^{q}\wedge\beta^{n-m}\wedge T\right]\\&=&A_j+B_j+C_j.\end{array}$$
 The inductive assumption gives that $C_j$ converges to
$0$ in the sense of currents. On the other hand, it is not hard to see that $$(dd^{c}u_{j})^{q}\wedge\beta^{m-p-q}\wedge\beta^{n-m}\wedge T=(dd^{c}u_{j})^{q}\wedge\beta^{m-p-q-1}\wedge\beta^{n-m+1}\wedge T\leqslant \left(dd^{c}(u_{j}+|z|^2)\right)^{m-p-1}\wedge\beta^{n-m+1}\wedge T.$$ As the sequence $u_j+|z|^2$ is uniformly bounded on $\Omega$, it is clear that the last term is dominated by $cap_{m-1,T}$ multiplied by a constant not depending on $j$. Then, in view of the argument used in the case $k=1$, it is not difficult to deduce that $A_j$ also converges to $0$ in the sense of currents. Similarly, since $\psi=0$ outside $G$, for a test form $\varphi$, there exist two constants $C_1,C_2>0$ not depending on $j$ such that  $$ \begin{array}{lcl}\left|\left\langle B_j,\varphi\right\rangle\right|&\leqslant& C_1\|u\|_{L^{\infty}(G)}\left(\ds\int_{G} \left[(dd^{c}u_{j})^{q}\wedge\beta^{m-p-q} \wedge\beta^{n-m}\wedge T-(dd^{c}u)^{q}\wedge\beta^{m-p-q}\wedge\beta^{n-m}\wedge T\right]\right)\\&\leqslant&C_2\|u\|_{L^{\infty}(G)}cap_{m-1,T}(G)<\varepsilon C_2\|u\|_{L^{\infty}(G)}.
\end{array}$$
Consequently, $B_j$ converges to $0$ in the sense of the currents and therefore, we have obtained the weak convergence of $u_{j}(dd^{c}u_{j})^{q}\wedge\beta^{n-m}\wedge T$ to $u(dd^{c}u)^{q}\wedge\beta^{n-m}\wedge T  $ and this proof is complete.
\end{proof}
  Now, we investigate the convergence in the sense of $cap_{m-1,T}$ for increasing sequence of $m$-sh functions which are not  necessarily bounded.
 \begin{pro}
Assume that $ T\in{\mathscr C}_p^m(\Omega)$ and $u,u_{j}$ are negative $m$-sh functions such that $u_{j}\uparrow u\  in\  cap_{m,T}$-a.e and $ u_{1}\in L^1_{loc}(\sigma_T)$. Then $u_{j}$ converges to $u$ in $cap_{m-1,T}$ on every $E\Subset \Omega,$
\end{pro}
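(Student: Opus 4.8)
\medskip
\noindent\textbf{Proof proposal.} The plan is, for a fixed $\delta>0$ and $E\Subset E'\Subset\Omega$, to show $cap_{m-1,T}(E\cap\{u-u_j\geqslant\delta\})\to 0$. Since $(u_j)_j$ increases and $u_j\uparrow u$ $cap_{m,T}$-a.e.\ with $u$ $m$-sh, one may work off a $cap_{m,T}$-pluripolar set (hence, as $cap_{m-1,T}\leqslant cap_{m,T}$, also $cap_{m-1,T}$-negligible) on which $u_1\leqslant u_j\leqslant u\leqslant 0$. Put $u^N=\max(u,-N)$ and $u_j^N=\max(u_j,-N)$, bounded $m$-sh functions with $u_j^N\uparrow u^N$; off the exceptional set one has the elementary pointwise bound $0\leqslant u-u_j\leqslant (u^N-u_j^N)+(|u_1|-N)_+$, whence
$$cap_{m-1,T}\bigl(E\cap\{u-u_j\geqslant\delta\}\bigr)\leqslant cap_{m-1,T}\bigl(E\cap\{u^N-u_j^N\geqslant\tfrac{\delta}{2}\}\bigr)+cap_{m-1,T}\bigl(E\cap\{u_1\leqslant -N-\tfrac{\delta}{2}\}\bigr).$$

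The $j$-independent second term is where the hypothesis $u_1\in L^1_{loc}(\sigma_T)$ enters. For any $m$-sh $v$ with $-1\leqslant v\leqslant 0$, Chebyshev's inequality gives $\int_{E\cap\{u_1\leqslant -M\}}(dd^cv)^{m-p-1}\wedge\beta^{n-m+1}\wedge T\leqslant M^{-1}\int_E|u_1|(dd^cv)^{m-p-1}\wedge\beta^{n-m+1}\wedge T$, and the Chern--Levine--Nirenberg inequality (cf.\ inequality 1.2.8 in \cite{ref13} and the estimates in \cite{ref9}) bounds the latter by $C_{E,E'}\,\|u_1\|_{L^1(E',\sigma_T)}$ with $C_{E,E'}$ independent of $v$ (since $\|v\|_{L^\infty}\leqslant 1$). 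Taking the supremum over $v$ yields $cap_{m-1,T}(E\cap\{u_1\leqslant -M\})\leqslant C_{E,E'}M^{-1}\|u_1\|_{L^1(E',\sigma_T)}\to 0$ as $M\to\infty$.

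For the first term I would use the fact that a monotone increasing sequence of uniformly bounded $m$-sh functions converges in $cap_{m-1,T}$ on relatively compact sets --- proved exactly as the ``well known'' decreasing case recalled above: for fixed $N$ and $\varepsilon>0$, the $cap_{m,T}$-quasicontinuity of bounded $m$-sh functions (\cite{ref9}) yields an open set $\mathcal G$ with $cap_{m,T}(\mathcal G)<\varepsilon$, containing the $cap_{m,T}$-pluripolar set where $u_j^N\uparrow u^N$ fails pointwise, on whose complement all the $u^N,u_j^N$ are continuous; on the compact set $\overline E\setminus\mathcal G$, Dini's theorem upgrades $u_j^N\uparrow u^N$ to uniform convergence, so $\{u^N-u_j^N\geqslant\delta/2\}\cap\overline E\subset\mathcal G$ for $j$ large, hence $cap_{m-1,T}(E\cap\{u^N-u_j^N\geqslant\delta/2\})\leqslant cap_{m,T}(\mathcal G)<\varepsilon$. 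Thus this term tends to $0$ as $j\to\infty$ for each fixed $N$.

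Combining the two estimates gives $\limsup_{j\to\infty}cap_{m-1,T}(E\cap\{u-u_j\geqslant\delta\})\leqslant cap_{m-1,T}(E\cap\{u_1\leqslant -N-\delta/2\})$ for every $N$, and letting $N\to\infty$ concludes. The step requiring the most care is the uniform Chern--Levine--Nirenberg estimate of the second paragraph: one must verify that $\int_E|u_1|(dd^cv)^{m-p-1}\wedge\beta^{n-m+1}\wedge T$ is dominated by $\|u_1\|_{L^1(E',\sigma_T)}$ with a constant not depending on the competitor $v$, and it is precisely the extra power of $\beta$ built into $cap_{m-1,T}$ (as opposed to $cap_{m,T}$) that makes mere $\sigma_T$-integrability of $u_1$ enough here.
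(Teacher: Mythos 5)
Your overall architecture matches the paper's: you split according to how negative $u_1$ is, dispose of the deep sublevel set $\{u_1\leqslant -N-\delta/2\}$ uniformly in the competitor $v$ by Chebyshev plus the Chern--Levine--Nirenberg inequality (this is exactly the paper's estimate $I_1\leqslant C_1/s$, and your emphasis on the extra factor of $\beta$ in $cap_{m-1,T}$ is the right one), and reduce the rest to the bounded truncations $u_j^N\uparrow u^N$. The gap is in how you treat that remaining term. You ask for an open set ${\mathcal G}$ with $cap_{m,T}({\mathcal G})<\varepsilon$ which simultaneously (i) has all $u^N,u_j^N$ continuous on its complement and (ii) \emph{contains the $(m,T)$-pluripolar set $P$ on which the monotone convergence fails}. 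Point (i) is what the quasicontinuity theorem of \cite{ref9} provides; point (ii) is an outer-regularity assertion --- that a set of (inner) $cap_{m,T}$-capacity zero sits inside open sets of arbitrarily small $cap_{m,T}$-capacity --- which is not established in the paper or its references for a general $T\in{\mathscr C}_p^m(\Omega)$ (for $T=1$ it already requires the Josefson/Bedford--Taylor circle of ideas). Without (ii), Dini's theorem cannot be applied on the compact set $\overline E\setminus{\mathcal G}$: the pointwise convergence $u_j^N\uparrow u^N$ may fail on $P\cap(\overline E\setminus{\mathcal G})$, the limit $\sup_j u_j^N$ need not be upper semicontinuous there, and uniform convergence breaks down precisely at those points. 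Note also that $cap_{m,T}(P)=0$ does not force $P$ to be Lebesgue-null (take $T$ supported on a proper subvariety), so one cannot repair this by passing to upper semicontinuous regularizations.

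The paper avoids the issue entirely. Instead of Dini it applies Chebyshev's inequality in the form
$$I_2\leqslant\frac1\delta\int_\Omega(u^s-u_j^s)\,(dd^cv)^{m-p-1}\wedge\beta^{n-m+1}\wedge T$$
(the exceptional set is harmless here because it is not charged by any of the measures $(dd^cv)^{m-p-1}\wedge\beta^{n-m+1}\wedge T$), then integrates by parts as in Lemma 2 of \cite{ref9} to bound this, uniformly in $v$, by
$$\frac{C_2}{\delta}\int_\Omega|z|^{2}\left[dd^{c}u^{s}\wedge(dd^{c}u_{j}^{s})^{m-p-1}\wedge\beta^{n-m}\wedge T- (dd^{c}u_{j}^{s})^{m-p}\wedge \beta^{n-m}\wedge T\right],$$
which tends to $0$ by the weak convergence of Hessian measures along locally uniformly bounded sequences increasing $cap_{m,T}$-a.e.\ (Remark 2). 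If you wish to keep your decomposition, replace the quasicontinuity/Dini step by this Chebyshev and integration-by-parts argument; the rest of your proof then goes through.
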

\begin{rem}\
\begin{enumerate}
\item For the trivial current $T=1$ and by Theorem 3.9 in \cite{ref12} if $u,u_{j}\in {\mathscr E}^m(\Omega)$ such that $u_{j}\uparrow u$, then $(dd^cu_j)^m\wedge\beta^{n-m}$ converges weakly to $(dd^cu)^m\wedge\beta^{n-m}$ when $j\rightarrow +\infty$.
\item If $T\in{\mathscr C}_p^m(\Omega)$ such that $m\geqslant p+1,$ $u\in {\cal {SH}}_m(\Omega)\cap L^{\infty}_{loc}(\Omega)$ and $u_{j}\in {\cal {SH}}_m(\Omega)$ such that $(u_{j})_j$ is locally uniformly bounded and increases $cap_{m,T}$-a.e to $u$, then the sequence $(dd^c u_{j})^{m-p}\wedge\beta^{n-m}\wedge T$ converges
weakly to $ (dd^c u)^{m-p}\wedge\beta^{n-m}\wedge T $. In the border case $m=n$, we recover the well-known result of Bedford-Taylor.
\end{enumerate}\end{rem}
\begin{proof} Let $E\Subset \Omega$ and $v\in{\cal {SH}}_m(\Omega)$ such that $-1\leqslant v\leqslant0$. Choose $s\gg1,$ we have: $$  \begin{array}{lcl} \ds\int\limits_{E\cap\{u-u_{j}\geqslant\delta\}}(dd^{c}v)^{m-p-1}\wedge\beta^{n-m+1}\wedge T&=&\ds\int\limits_{E\cap\{u-u_{j}\geqslant\delta\}\cap\{u_{1}<-s\}}(dd^{c}v)^{m-p-1}\wedge\beta^{n-m+1}\wedge T\\&+&\ds\int\limits_{E\cap\{u-u_{j}\geqslant\delta\}\cap\{u_{1}\geqslant -s\}} (dd^{c}v)^{m-p-1}\wedge\beta^{n-m+1}\wedge T \\&=& I_{1}+I_{2}\end{array}$$
In view of the hypothesis on $u_1,$ the Chern-Levine-Nirenberg inequality \cite{ref13} implies that there exists a constant $C_1>0$ not depending in $v$ such that:
$$ I_1\leqslant \ds\int\limits_{E\cap\{u_{1}<-s\}}(dd^{c}v)^{m-p-1}\wedge\beta^{n-m+1}\wedge T \leqslant\frac{C_1}{s}.$$
Concerning $I_2$, setting $u^{s}=\max(u,-s)$ and $u_{j}^{s}=\max(u_{j},-s)$ and observe that for a fixed $s$, $u^{s}$ and $u^{s}_{j}$ are locally uniformly bounded. Hence, by using a standard modification on the functions $u^s,u_j^s$ (see \cite{ref2}), we can assume that for any $j,$ $ u_{j}^s=u^s$ near $\partial\Omega.$ Therefore, we have
 $$\begin{array}{lcl}  I_{2}&=&\ds\int\limits_{E\cap\{u^s-u_{j}^s\geqslant\delta\}\cap\{u_{1}\geqslant -s\}} (dd^{c}v)^{m-p-1}\wedge\beta^{n-m+1}\wedge T \\&\leqslant& \frac{1}{\delta}\ds\int\limits_{E\cap\{u_{1}\geqslant -s\}}(u^{s}-u_{j}^{s})(dd^{c}v)^{m-p-1}\wedge\beta^{n-m+1}\wedge T \\&\leqslant
&\frac{1}{\delta}\ds\int_{\Omega}(u^{s}-u_{j}^{s})(dd^{c}v)^{m-p-1}\wedge\beta^{n-m+1}\wedge T. \end{array}$$  By following the same line of the proof of Lemma 2 in \cite{ref9}, we obtain $$\begin{array}{lcl} I_{2}&\leqslant&\frac{1}{\delta} \ds\int_{\Omega}(u^{s}-u_{j}^{s})(dd^{c}v)^{m-p-1}\wedge\beta^{n-m+1}\wedge T\\&\leqslant&\frac{C_{2}}{\delta}\ds\int_{\Omega}(u^{s}-u_{j}^{s})(dd^{c}u_{j}^{s})^{m-p-1}\wedge\beta^{n-m+1}\wedge T \\&=&\frac{C_{2}}{\delta}\ds\int_{\Omega}|z|^{2} (dd^{c}u^{s}-dd^{c}u^{s}_{j})\wedge (dd^{c}u_{j}^{s})^{m-p-1}\wedge\beta^{n-m}\wedge T\\&=&\frac{C_{2}}{\delta}\ds\int_{\Omega}|z|^{2}[dd^{c}u^{s}\wedge(dd^{c}u_{j}^{s})^{m-p-1}\wedge\beta^{n-m}\wedge T- (dd^{c}u_{j}^{s})^{m-p}\wedge \beta^{n-m}\wedge T], \end{array}$$where $C_2>0$ not depending on $v$ and $j$. By passing on the supremum on $v$, we get $$cap_{m-1,T}(E\cap\{u-u_{j}\geqslant\delta\})\leqslant\frac{C_1}{s}+\frac{C_2}{\delta}\int_{\Omega}|z|^{2}[dd^{c}u^{s}\wedge(dd^{c}u_{j}^{s})^{m-p-1}\wedge\beta^{n-m}\wedge T- (dd^{c}u_{j}^{s})^{m-p}\wedge \beta^{n-m}\wedge T].$$ On the other hand by the second statement of Remark 2, we have
$$\lim\limits_{j\rightarrow +\infty} dd^{c}u^{s}\wedge(dd^{c}u_{j}^{s})^{m-p-1}\wedge\beta^{n-m}\wedge T =\lim\limits_{j\rightarrow+\infty}(dd^{c}u_{j}^{s})^{m-p}\wedge \beta^{n-m}\wedge T = (dd^{c}u^{s})^{m-p}\wedge\beta^{n-m}\wedge T. $$ The proof was completed by passing to the limit when $j\rightarrow +\infty$ and $s\rightarrow +\infty$ in this order. \end{proof}
 In Proposition 5, for $T=1$, if we replace the monotonicity hypothesis by some conditions on the estimates of the complex Hessian  operator, we get a slightly stronger convergence, that is a convergence in $cap_m$ on the class $\mathscr{F}^{m}(\Omega)$. By an adaptation of the proof of Theorem 5.1 in \cite{ref16} in the complex case $m=n$, we obtain the following generalization to the complex Hessian case:
\begin{pro}
Let $u,u_{j}\in{\mathscr F}^{m}(\Omega)$ such that $u_{j}\leqslant u$ for any $j\geqslant 1.$  Suppose that
\begin{enumerate}
\item $
\sup_{j}\int_{\Omega} (dd^{c}u_{j})^{m}\wedge\beta^{n-m}<\infty .$
\item $\Vert(dd^{c}u_{j})^{m}\wedge \beta^{n-m}-(dd^{c}u)^{m}\wedge\beta^{n-m}\Vert_{E}\rightarrow 0$ when $ j\rightarrow +\infty $ and for any $E\Subset\Omega.$
\end{enumerate} Then $u_{j}\rightarrow u$ in $m$-capacity on every $E\Subset \Omega$ when $j\rightarrow +\infty$.
\end{pro}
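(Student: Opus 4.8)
The plan is to adapt the strategy behind Proposition~5: estimate the $(m,1)$-capacity of the super-level sets $E\cap\{u-u_j\geqslant\delta\}$ and show it tends to $0$. First I would fix $E\Subset\Omega$ and a test function $v\in\mathcal{SH}_m(\Omega)$ with $-1\leqslant v\leqslant 0$, and write
$$\int_{E\cap\{u-u_j\geqslant\delta\}}(dd^cv)^m\wedge\beta^{n-m}\leqslant \frac{1}{\delta}\int_{\Omega}(u-u_j)(dd^cv)^m\wedge\beta^{n-m},$$
after the usual reduction (truncation by $\max(\cdot,-s)$, standard boundary modification as in \cite{ref2}) that makes the functions locally bounded and equal near $\partial\Omega$. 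Because $u,u_j\in\mathscr{F}^m(\Omega)$ one can in fact work globally on $\Omega$; here one must invoke the class $\mathscr{F}^m$ so that the integrals $\int_\Omega(dd^cu_j)^m\wedge\beta^{n-m}$ are finite and the comparison-principle type manipulations are legitimate.

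Next, following the line of Lemma~2 in \cite{ref9}, I would iterate integration by parts to replace $(dd^cv)^m$ by $(dd^cu_j)^m$ at the cost of a constant $C>0$ independent of $v$ and $j$, obtaining
$$\int_{E\cap\{u-u_j\geqslant\delta\}}(dd^cv)^m\wedge\beta^{n-m}\leqslant \frac{C}{\delta}\int_{\Omega}(u-u_j)(dd^cu_j)^m\wedge\beta^{n-m}\ \ \text{(truncated versions)}.$$
Then I would rewrite the right-hand side using $u-u_j\geqslant 0$ together with the telescoping identity expressing $(u-u_j)(dd^cu_j)^m\wedge\beta^{n-m}$ after one more integration by parts in terms of differences $(dd^cu)^k\wedge(dd^cu_j)^{m-k}\wedge\beta^{n-m}-(dd^cu_j)^m\wedge\beta^{n-m}$, so that hypothesis (1) controls the masses and hypothesis (2) forces the mixed Monge--Amp\`ere/Hessian masses on $E$ to converge to those of $(dd^cu)^m\wedge\beta^{n-m}$. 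Taking the supremum over $v$ yields a bound on $cap_m\bigl(E\cap\{u-u_j\geqslant\delta\}\bigr)$ by a term of the form $C_1/s + C_2\varepsilon_j(s)/\delta$ with $\varepsilon_j(s)\to 0$ as $j\to\infty$ for each fixed $s$; letting $j\to\infty$ and then $s\to\infty$ completes the argument, exactly as in the end of the proof of Proposition~5 and of Theorem~5.1 in \cite{ref16}.

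The main obstacle I anticipate is the passage from the mixed terms $(dd^cu)^k\wedge(dd^cu_j)^{m-k}\wedge\beta^{n-m}$ back to genuine Monge--Amp\`ere masses of $u_j$ and $u$: one needs a convergence of these mixed masses on $E\Subset\Omega$, which is not literally hypothesis (2) but should follow from (1), (2) and the monotonicity $u_j\leqslant u$ via repeated application of the Xing-type comparison inequality / the inequality~1.2.8 of \cite{ref13}, bounding each mixed mass between pure masses that converge. A secondary technical point is justifying the integration by parts on all of $\Omega$ for functions in $\mathscr{F}^m(\Omega)$ rather than on a relatively compact subset; this is handled by the truncation $u^s=\max(u,-s)$, for which one stays in $\mathscr{E}^m_0(\Omega)$, performs the computation there, and only afterwards lets $s\to\infty$ using (1) to control the tail $\{u_1<-s\}$. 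Everything else is a routine adaptation of the already-cited arguments.
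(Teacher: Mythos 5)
You are not being compared against a written argument here: for this proposition the paper offers no proof at all, it simply invokes ``an adaptation of the proof of Theorem 5.1 in \cite{ref16}'', whose engine is the Xing-type comparison principle in the Cegrell classes. Your instinct (Chebyshev bound on sublevel sets, truncation, Xing-type inequalities) points in the right direction, but your sketch has a genuine gap, and it is not the one you flag. The decisive difficulty is that hypothesis (2) controls $\Vert(dd^cu_j)^m\wedge\beta^{n-m}-(dd^cu)^m\wedge\beta^{n-m}\Vert_E$ only for $E\Subset\Omega$, whereas every integral your chain produces --- $\int_\Omega(u-u_j)(dd^cv)^m\wedge\beta^{n-m}$, then $\int_\Omega(u^s-u_j^s)(dd^cu_j^s)^m\wedge\beta^{n-m}$, then the telescoped mixed-mass differences --- is taken over all of $\Omega$. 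Near $\partial\Omega$ nothing is small: $u$ and $u_j$ do not agree there (so the ``constant independent of $v$ and $j$'' replacement borrowed from Lemma 2 of \cite{ref9} does not apply as stated, and the boundary modification of \cite{ref2} alters precisely the measures that hypothesis (2) speaks about), and since $u_j\leqslant u$ forces $\int_\Omega(dd^cu_j)^m\wedge\beta^{n-m}\geqslant\int_\Omega(dd^cu)^m\wedge\beta^{n-m}$, a fixed amount of excess mass of $\mu_j=(dd^cu_j)^m\wedge\beta^{n-m}$ may sit arbitrarily close to $\partial\Omega$; hypothesis (1) bounds it but does not make it $o(1)$. So the outer portion of your right-hand side is merely $O(1)$ and the argument does not close. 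The missing idea (which is what the ``adaptation'' of \cite{ref16} really consists of) is a localization of the competitors: fix a compact $K$ with $E$ contained in the interior of $K$, and replace an arbitrary $w\in{\cal {SH}}_m(\Omega)$, $-1\leqslant w\leqslant 0$, by $\widetilde w=\max(w,h^{\ast}_{m,K,\Omega})$; since $h^{\ast}_{m,K,\Omega}=-1$ on the interior of $K$ one has $(dd^c\widetilde w)^m\wedge\beta^{n-m}\geqslant(dd^cw)^m\wedge\beta^{n-m}$ there, so the capacity of $E\cap\{u-u_j\geqslant\delta\}$ is computed by such $\widetilde w$, while $-\widetilde w\leqslant -h^{\ast}_{m,K,\Omega}$ is uniformly small outside a slightly larger compact $K'$. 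Feeding these weights into the Xing-type comparison principle for ${\mathscr F}^m$ (the Hessian analogue of Theorem 3.1 of \cite{ref16}, cf. \cite{ref12}) on the set $\{u_j<u\}$ yields
$$\frac{\delta^m}{m!}\int_{E\cap\{u-u_j\geqslant\delta\}}(dd^cw)^m\wedge\beta^{n-m}\leqslant\Vert\mu_j-\mu\Vert_{K'}+\sup_{\Omega\smallsetminus K'}\bigl(-h^{\ast}_{m,K,\Omega}\bigr)\Bigl(\sup_j\mu_j(\Omega)+\mu(\Omega)\Bigr),$$
so that (2) kills the first term and (1) the second; no truncation, no universal constant, and none of the mixed-mass bookkeeping you were worried about is needed.

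Two secondary slips confirm that you are transplanting the monotone argument of Proposition 5 without the required changes. First, your tail control ``using (1) to control the tail $\{u_1<-s\}$'' is empty here: the sequence is not monotone, $u_1$ is not a minorant of the $u_j$, and $\{u_j<-s\}\not\subset\{u_1<-s\}$. The uniform-in-$j$ tail estimate must come instead from the ${\mathscr F}^m$ capacity bound $s^m\,cap_m(\{u_j<-s\})\leqslant\int_\Omega(dd^cu_j)^m\wedge\beta^{n-m}$ (Corollary 1 of the paper) together with hypothesis (1). Second, the step replacing $(dd^cv)^m$ by $(dd^cu_j)^m$ with a constant independent of $v$ and $j$ is exactly where the boundary-matching hypothesis of Lemma 2 in \cite{ref9} is used, so it cannot simply be quoted for functions of ${\mathscr F}^m(\Omega)$; working with the comparison principle and the $m$-th power of $(u-u_j)$ as above avoids this issue altogether.
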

\section{Weighted $(m,T)$-capacity and Sadullaev weighted $m$-capacity}
The notions of weighted capacity and weighted extremal function in the pluripotential theory, were studied by many authors (see for example \cite{ref7},\cite{ref8} and \cite{ref15}). In the complex Hessian context, we define the weighted $m$-extremal function as follows:
\begin{defn}
Let  $\Omega$ be an  $m$-hyperconvex domain, $ E\subset\Omega $ and  $ u\in{\cal {SH}}_m^-(\Omega) $. Setting
$$ h_{m,E,u}=\sup \lbrace v:v\ \in {\cal {SH}}_{m}^{-}(\Omega),\ v \leqslant u\ {\rm on}\ E\rbrace . $$
The function $ h_{m,E,u} $ is called \textit{$m$-extremal function with weight $u$} associated to $E$.
\end{defn}
 As usually, we denote by $ h^{\ast}_{m,E,u} $ the regularization upper semi-continuous of $ h_{m,E,u}.$ It is clear that  $ h_{m,E,u}\leqslant h_{m,E,u}^{\ast}\in{\cal {SH}}_{m}^{-}(\Omega) $ and $u\leqslant h_{m,E,u}^\ast\leqslant 0$ and $h_{m,E,u}=u$ on $E$. If $ m=n $, we recover the weighted extremal function investigated by \cite{ref8},\cite{ref15} and if  $1\leqslant m < n\ and\  u=-1 $ we obtain the $m$-extremal function developed in \cite{Bl},\cite{Den}, \cite{ref13} and \cite{ref17}. First of all we list the most important properties of the weighted $m$-extremal function.
 \begin{pro} The function $h_{m,E,u}^\ast$ shares the following properties:
\begin{enumerate}
\item $ h^{\ast}_{m,E,u}=h^{\ast}_{m,E\smallsetminus F,u} $ for all $m$-polar $ F\subset\Omega $.
\item If $u\in{\mathscr E}^m(\Omega)$ then $h^{\ast}_{m,E,u}\in \mathscr{E}^{m}(\Omega)$ and  $h^{\ast}_{m,E,u}= h_{m,E,u} $ a.e in $m$-capacity. Moreover, we have $\Supp[(dd^{c}h^{\ast}_{m,E,u})^{m}\wedge\beta^{n-m}]\subset\overline{E} .$
\item If $(u_{j})_j\subset{\cal {SH}}_m^-(\Omega) $ such that $ u_{j}\downarrow u$  then $ h_{m,E,u_{j}}^{\ast}\downarrow  h_{m,E,u}^{\ast}.$
\item If $u\in{\mathscr E}^m(\Omega)$ and $E\Subset\Omega$ then $h_{m,E,u}^\ast\in{\mathscr F}^m(\Omega)$.
\end{enumerate}
\end{pro}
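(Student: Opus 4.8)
The plan is to establish the four properties in an order that exploits the earlier structural results, especially Proposition 7 (properties of the Cegrell classes ${\mathscr E}^m_0,{\mathscr F}^m,{\mathscr E}^m$) and the definition of $h_{m,E,u}$ as an upper envelope. First I would record the elementary facts noted just before the statement: since $0$ is a competitor, $h_{m,E,u}\leqslant 0$; since $u$ is a competitor (it is negative $m$-sh and trivially $u\leqslant u$ on $E$), we get $u\leqslant h_{m,E,u}$ on $\Omega$ and $h_{m,E,u}=u$ on $E$; and by Proposition 1(5) the envelope of negative $m$-sh functions is $m$-sh after upper regularization, so $h^\ast_{m,E,u}\in{\cal SH}_m^-(\Omega)$. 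These will be used repeatedly.

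For \textbf{(1)}, the statement $h^\ast_{m,E,u}=h^\ast_{m,E\smallsetminus F,u}$ for $m$-polar $F$: the inclusion $\leqslant$ is clear since fewer constraints (on $E\smallsetminus F$) allow more competitors. For $\geqslant$, I would take any competitor $v$ for $h_{m,E\smallsetminus F,u}$ and argue that, since $F$ is $m$-polar, there is a negative $m$-sh function $w$ with $w=-\infty$ on $F$; then $v+\varepsilon w$ (or $\max(v,u)$ suitably modified) is a competitor for $h_{m,E,u}$, and letting $\varepsilon\to0$ and regularizing gives the reverse inequality. The key point is that $m$-polar sets are negligible for the upper-semicontinuous regularization, which is standard once one has a global $m$-sh function with $-\infty$-pole along $F$.

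For \textbf{(2)}, assuming $u\in{\mathscr E}^m(\Omega)$: to see $h^\ast_{m,E,u}\in{\mathscr E}^m(\Omega)$ I would use $u\leqslant h^\ast_{m,E,u}\leqslant 0$ together with Proposition 7(1) for the class ${\mathscr E}^m$, after first reducing to the case $E\Subset\Omega$ — this is really where (4) does the work, so I would prove (4) before finishing (2). The identity $h^\ast=h$ a.e.\ in $m$-capacity is the classical ``negligible sets are pluripolar'' statement (a theorem of Bedford--Taylor type in the $m$-setting, available in the references cited); I would invoke it. The support statement $\Supp[(dd^ch^\ast_{m,E,u})^m\wedge\beta^{n-m}]\subset\overline E$ follows from the standard envelope argument: on the open set $\Omega\smallsetminus\overline E$ the function $h^\ast_{m,E,u}$ is maximal (a local perturbation upward that stays $\leqslant 0$ remains a competitor), hence its Hessian measure vanishes there — this uses the characterization of maximality via the vanishing of $(dd^c\cdot)^m\wedge\beta^{n-m}$, which in turn relies on the continuity results of Section 3.

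For \textbf{(3)}, if $u_j\downarrow u$: monotonicity of the envelope in the weight gives $h^\ast_{m,E,u_j}$ decreasing, with limit $\geqslant h^\ast_{m,E,u}$; the reverse inequality is the delicate direction. I would set $\tilde h=\lim_j h^\ast_{m,E,u_j}$, note $\tilde h\in{\cal SH}_m^-(\Omega)$, and show $\tilde h\leqslant u$ on $E$ off an $m$-polar set — using that $h^\ast_{m,E,u_j}=h_{m,E,u_j}\leqslant u_j$ on $E$ away from an $m$-polar set and $u_j\downarrow u$ — then conclude $\tilde h\leqslant h^\ast_{m,E,u}$ by property (1) to absorb the exceptional polar set. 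Finally \textbf{(4)}: for $u\in{\mathscr E}^m(\Omega)$ and $E\Subset\Omega$, pick $\chi\in{\mathscr E}^m_0(\Omega)$ with $\chi\leqslant u$ on a neighborhood of $\overline E$ (possible by Proposition 7(2), realizing $u$ locally in ${\mathscr F}^m$, combined with $m$-hyperconvexity of $\Omega$); then $\chi$ is a competitor so $\chi\leqslant h^\ast_{m,E,u}\leqslant 0$, and since $\chi\in{\mathscr F}^m(\Omega)$ and ${\mathscr F}^m$ is stable under taking larger negative $m$-sh functions (Proposition 7(1)), we get $h^\ast_{m,E,u}\in{\mathscr F}^m(\Omega)$.

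I expect the main obstacle to be the support statement in (2) and the reverse inequality in (3): both require the quasicontinuity/negligible-set machinery ($h^\ast=h$ outside an $m$-polar set) and the maximality-versus-vanishing-Hessian equivalence, and one must be careful that the exceptional sets produced are genuinely $m$-polar (not merely of measure zero) so that property (1) can be applied to discard them. The routine monotonicity inclusions in (1) and (3), and the class-membership arguments in (2) and (4), are comparatively straightforward once Proposition 7 is invoked.
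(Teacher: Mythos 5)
Your treatment of (1) is essentially the paper's own proof: the paper likewise fixes a negative $m$-sh function $\varphi$ with $F\subset\{\varphi=-\infty\}$, observes that $v_j=v+\varphi/j$ are competitors for $h_{m,E,u}$ whenever $v$ is a competitor for $h_{m,E\smallsetminus F,u}$, and passes to $\bigl(\sup_j v_j\bigr)^\ast$, which coincides a.e.\ with $v$; your $v+\varepsilon w$ with $\varepsilon\to0$ is the same device. Where you genuinely diverge is in (2)--(4): the paper does not prove these at all but simply quotes Nguyen V.T.\ \cite{Van Nguyen}, whereas you sketch self-contained arguments (max-stability of the Cegrell classes for membership in ${\mathscr E}^m$ and ${\mathscr F}^m$, the negligible-sets theorem to get $h_{m,E,u}=h^\ast_{m,E,u}$ outside an $m$-polar set, a local lifting plus the maximality/vanishing-Hessian equivalence for the support statement, and, for (3), passing to the decreasing limit and using (1) to absorb the $m$-polar exceptional set where $h^\ast_{m,E,u_j}>u_j$ on $E$). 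These sketches are correct in outline and are essentially the arguments of the cited reference, so your route buys self-containedness at the cost of re-proving known results, while the paper's buys brevity. Two small repairs to your plan: in (4) the auxiliary competitor must be taken in ${\mathscr F}^m(\Omega)$ (using that $u\in{\mathscr E}^m(\Omega)$ agrees near $\overline E$ with some $v\in{\mathscr F}^m(\Omega)$), not in ${\mathscr E}^m_0(\Omega)$ as you first write, since a bounded function cannot lie below an unbounded weight on a neighborhood of $\overline E$ --- your subsequent wording ``$\chi\in{\mathscr F}^m(\Omega)$'' is the correct one; and in (2) no reduction to $E\Subset\Omega$ is needed, because $h^\ast_{m,E,u}=\max(h^\ast_{m,E,u},u)$ and the max-stability of ${\mathscr E}^m(\Omega)$ give the membership for arbitrary $E\subset\Omega$, so (2) does not in fact depend on (4).
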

\begin{proof} The last three statements are due to \cite{Van Nguyen}, so it suffices to prove the first one.  Since $E\smallsetminus F\subset E$, it is obvious that $ h^{\ast}_{m,E,u}\leqslant h^{\ast}_{m,E\smallsetminus F,u}$. For the opposite inequality, let $\varphi$ be a negative $m$-sh function on $\cb^n$ such that $F\subset\{\varphi=-\infty\}$ and let $v\in{\cal {SH}}_{m}^{-}(\Omega)$ such that $v\leqslant u$ on $E\smallsetminus F$. It is clear that the sequence of $m$-sh functions $v_j=v+\frac{\varphi}{j}$ is monotone increasing and satisfies $v_j\leqslant u$ on $E$ for every $j\geqslant 1$. It follows that $v_j\leqslant h_{m,E,u} \ \forall j\geqslant 1$. Hence, $\widetilde v=\left(\sup_j v_j\right)^\ast\leqslant h^\ast_{m,E,u}$. Since $\widetilde v$ is $m$-sh and coincides a.e with $v$ we get $v\leqslant h^\ast_{m,E,u}$ on $\Omega$ and therefore $h_{m,E\smallsetminus F,u}\leqslant h_{m,E,u}^\ast$ on $\Omega$. The desired inequality follows because $h_{m,E\smallsetminus F,u}=h_{m,E\smallsetminus F,u}^\ast$ a.e.
\end{proof}
It should be noted that Nguyen \cite{Van Nguyen} has introduced the weighted $m$-extremal function by setting: $u_E=\sup \lbrace v:v\ \in {\cal {SH}}_{m}^{-}(\Omega),\ v \leqslant u$ outside an $m$-polar subset of $ E\rbrace . $
Building on the first statement of Proposition 7, it is not difficult to prove that the two definitions are coincide, in particular we have $h_{m,E,u}^\ast=u_E$. In this section by relying on Proposition 7, we firstly associate to each current $T\in{\mathscr C}_p^m(\Omega)$ two classes of unbounded $m$-sh functions on which the complex hessian operator is well defined for the current $T=1$. In particular, for the complex setting $m=n$ we recover some interesting classes studied by Bedford \cite{ref3}, Cegrell-Kolodziej-Zeriahi \cite{ref8} and Benelkourchi \cite{ref4} in connection with the complex Monge-Amp\`ere operator. Next, we introduce the notion of $(m,T)$-capacity with weight $u\in{\cal {SH}}_m^-(\Omega)$, where $T\in{\mathscr C}_p^m(\Omega)$. Moreover, in the special case $T=1$ we investigate the relationship between the weighted outer capacity associated to the weighted $(m,1)$-capacity and the weighted $m$-extremal function. We generalize then some results obtained by \cite{ref8} and \cite{Van Nguyen}. Finally, we deal with the so-called Sadullaev weighted $m$-capacity and we present a comparison result with the weighted outer $m$-capacity.
\subsection{ The classes ${\mathscr B}_{T}^{m}(\Omega)$ and ${\mathscr P}^{m}_{T}(\Omega) $}
 Let $ \varphi:\rb\rightarrow\rb_{+} $ be a decreasing function such that
\begin{equation} \int_{1}^{\infty}\frac{\varphi(t)}{t}dt<+\infty
\end{equation}
and the function $t\mapsto -(-t\varphi(-t))^{\frac{1}{m}} $ is increasing and convex on $ ]-\infty,0[.$  We introduce  $ {\mathscr B}_{T}^{m}(\Omega) $ to be the class of negative $m$-sh functions $ \psi $ such that for every $ z_{0}\in\Omega,$ there exist an open neighborhood ${\mathscr O}$ of $z_{0},$ a function $v\in {\cal {SH}}_{m}^{-}({\mathscr O})\cap L^{1}_{loc}(\sigma_{T})$  and a function $\varphi $ satisfying $(4.1)$ such that $-(-v\varphi(-v))^{\frac{1}{m}}\leqslant \psi$  on ${\mathscr O}\cap\Supp T.$ For $T=1$ and $m=n,$ Cegrell-Kolodziej-Zeriahi \cite{ref8} proved that $ {\mathscr B}_1(\Omega)\subset{\mathscr E}(\Omega).$ In particular, the complex Monge-Amp\`ere is well defined on ${\mathscr B}_1(\Omega)$.
\begin{exe} Let $ T\in{\mathscr C}_p^m(\Omega)$ and $v\in {\cal {SH}}_{m}^{-}(\Omega)\cap L^{1}_{loc}(\sigma_{T}).$ Then we have $-(-v)^{\alpha}\in {\mathscr B}^{m}_{T}(\Omega),$ for  each $0<\alpha<\frac{1}{m}$. Indeed,
 we have $-(-v)^{\alpha}=-(-v\varphi(-v)) ^{\frac{1}{m}}$ for $\varphi(t)=t^{m\alpha-1}$. Moreover $\int^{\infty}_{1} \frac{t^{m\alpha-1}}{t}dt=\frac{1}{1-m\alpha}<\infty$ and $t\mapsto -(-t\varphi(-t))^{\frac{1}{m}}$ is increases and  convex on $ ]-\infty,0[. $ Then by the definition, it is clear that $-(-v)^{\alpha}\in{\mathscr B}^{m}_{T}(\Omega).$
\end{exe}
For every $1\leqslant m\leqslant n ,$ we define the second class ${\mathscr P}^m_T(\Omega)$ by:
$${\mathscr P}^{m}_{T}(\Omega)= \left\{ u\in {\cal {SH}}_{m}^{-}(\Omega);\   \int_{0}^{+\infty}s^{m-1}cap_{m,T}(\{u<-s\}\cap K)ds <\infty,\ \ \forall  K\cap\Supp T\Subset \Omega \right\}.$$
 By using some potential theoretic estimates relatively to $T$ inspired from \cite{ref13} for the trivial current $T=1$, we show that there is a link between the two above classes. More precisely, we prove the following result which improves the one obtained by \cite{ref4} for $m=n$ and $T=1$:
\begin{pro}
Let $\Omega \Subset \cb^{n}$, be an $m$-hyperconvex domain and $ T\in{\mathscr C}_p^m(\Omega)$ then we have ${\mathscr B}^{m}_{T}(\Omega)\subset {\mathscr P}^{m}_{T}(\Omega).$
 In particular, when $T=1,$ we have ${\mathscr P}^{m}_{1}(\Omega)\subset {\mathscr E}^{m}(\Omega)$ i.e, the complex Hessian operator is well defined on ${\mathscr P}^{m}_{1}(\Omega)$.
\end{pro}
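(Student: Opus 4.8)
The plan is to prove the two inclusions separately, treating $\mathscr{B}^m_T(\Omega)\subset\mathscr{P}^m_T(\Omega)$ as the main content and then deriving $\mathscr{P}^m_1(\Omega)\subset\mathscr{E}^m(\Omega)$ as a corollary by specializing to $T=1$. For the first inclusion, I would start from $\psi\in\mathscr{B}^m_T(\Omega)$, fix a compact $K$ with $K\cap\Supp T\Subset\Omega$, and cover $K\cap\Supp T$ by finitely many neighborhoods $\mathscr{O}$ on each of which there is $v\in\mathcal{SH}^-_m(\mathscr{O})\cap L^1_{loc}(\sigma_T)$ and a weight $\varphi$ with $-(-v\varphi(-v))^{1/m}\leqslant\psi$ on $\mathscr{O}\cap\Supp T$. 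Since $cap_{m,T}$ is subadditive (Proposition 3(4)) and the support condition lets us localize, it suffices to bound $\int_0^\infty s^{m-1}cap_{m,T}(\{\psi<-s\}\cap K')\,ds$ for $K'\Subset\mathscr{O}$, $K'\cap\Supp T\neq\emptyset$.

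The heart of the argument is a Chern–Levine–Nirenberg-type estimate relative to $T$ for the sublevel sets of $v$. The inequality $-(-v\varphi(-v))^{1/m}\leqslant\psi$ means $\{\psi<-s\}\cap\Supp T\subset\{-v\varphi(-v)>s^m\}\cap\Supp T$; since $t\mapsto t\varphi(t)$ is increasing (this follows from the convexity/monotonicity hypotheses on $-(-t\varphi(-t))^{1/m}$), the set $\{-v\varphi(-v)>s^m\}$ equals $\{-v>g(s)\}$ for an increasing $g$ with $g(s)\varphi(g(s))=s^m$, i.e. $\{v<-g(s)\}$. So I need to estimate $cap_{m,T}(\{v<-t\}\cap K')$ for large $t$. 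Following the potential-theoretic estimates of Lu \cite{ref13} adapted to the current $T$ (the Chern–Levine–Nirenberg inequality relative to $T$ and a standard comparison using $\max(v,-t)$ and an auxiliary exhaustion function), one gets a bound of the form $cap_{m,T}(\{v<-t\}\cap K')\leqslant C\|v\|_{L^1(\sigma_T,\,\mathscr{O}')}/t$ for a slightly larger $\mathscr{O}'$. The substitution $t=g(s)$ then turns the integral $\int_0^\infty s^{m-1}cap_{m,T}(\{\psi<-s\}\cap K')\,ds$ into something controlled by $\int_1^\infty s^{m-1}/g(s)\,ds$, and using $g(s)=s^m/\varphi(g(s))$ this becomes $\int \varphi(g(s))\,s^{-1}\,ds$, which after the change of variables $t=g(s)$ (so $s=(t\varphi(t))^{1/m}$) reduces precisely to a constant times $\int_1^\infty \varphi(t)/t\,dt<\infty$ by hypothesis $(4.1)$. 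Summing over the finite cover gives $\psi\in\mathscr{P}^m_T(\Omega)$.

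For the second inclusion, take $T=1$ and $u\in\mathscr{P}^m_1(\Omega)$; I want $u\in\mathscr{E}^m(\Omega)$. Fix $z_0\in\Omega$ and a small ball $\omega\Subset\Omega$; using the $m$-hyperconvexity one can build decreasing approximants $u_j\in\mathscr{E}^m_0(\Omega)$ with $u_j\downarrow u$ on $\omega$ (e.g. $u_j=\max(u,j\rho)$ for the exhaustion function $\rho$, suitably truncated). The point is then to show $\sup_j\int_\Omega(dd^cu_j)^m\wedge\beta^{n-m}<\infty$, and this is exactly where the $\mathscr{P}^m_1$ condition enters: by the standard integration-by-parts / layer-cake identity expressing $\int(dd^cu_j)^m\wedge\beta^{n-m}$ over a compact set in terms of $\int_0^\infty s^{m-1}cap_{m,1}(\{u_j<-s\}\cap K)\,ds$ (as in Lu \cite{ref13}), together with $\{u_j<-s\}\subset\{u<-s\}$, one obtains a uniform bound from $\int_0^\infty s^{m-1}cap_{m,1}(\{u<-s\}\cap K)\,ds<\infty$. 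This places $u$ locally in the class defining $\mathscr{E}^m(\Omega)$, hence $u\in\mathscr{E}^m(\Omega)$ and the Hessian operator is well defined on it.

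The main obstacle I expect is the Chern–Levine–Nirenberg estimate relative to $T$ for the sublevel sets of $v$ — i.e. getting the clean bound $cap_{m,T}(\{v<-t\}\cap K')\lesssim \|v\|_{L^1(\sigma_T)}/t$ with a constant uniform over the competitors in the definition of $cap_{m,T}$. This requires carefully handling the boundary behavior (via a modification replacing $v$ by $\max(v,-t)$ and matching it to the $m$-hyperconvex exhaustion near $\partial\mathscr{O}'$) and invoking the CLN inequality from \cite{ref13} in the $T$-weighted form; the rest is the bookkeeping of the change of variables $t=g(s)$ which, while slightly delicate because $g$ is only implicitly defined through $g\varphi(g)=s^m$, is ultimately routine once one notes $s\,ds = \frac{1}{m}d(t\varphi(t))^{\text{-type}}$ manipulations reduce it to $(4.1)$.
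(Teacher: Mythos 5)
Your treatment of the first inclusion ${\mathscr B}^m_T(\Omega)\subset{\mathscr P}^m_T(\Omega)$ is essentially the paper's own argument: the same sublevel-set inclusion $\{\psi<-s\}\cap\Supp T\subset\{-v\varphi(-v)>s^m\}\cap\Supp T$, the same CLN-type bound $cap_{m,T}(\{v<-t\}\cap K')\leqslant C/t$ imported from Lu's thesis, and the same change of variables reducing everything to condition $(4.1)$. The only difference is that you invert $t\mapsto t\varphi(t)$ implicitly, whereas the paper passes to the primitive $\psi$ of $\varphi$ (concave, so $\psi(t)\geqslant t\varphi(t)$) and inverts $\psi$, which makes the substitution $s^m=\psi(t)$, $ms^{m-1}ds=\varphi(t)dt$ exact instead of a Stieltjes-type estimate; this is cosmetic.

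The second inclusion, however, has a genuine gap. To get $u\in{\mathscr E}^m(\Omega)$ you must exhibit $u_j\in{\mathscr E}^m_0(\Omega)$, $u_j\downarrow u$ near a given point, with $\sup_j\int_\Omega(dd^cu_j)^m\wedge\beta^{n-m}<+\infty$, i.e.\ a bound on the \emph{total} Hessian mass over $\Omega$. But the hypothesis $u\in{\mathscr P}^m_1(\Omega)$ together with the layer-cake estimate $\int_K(dd^c\varphi)^m\wedge\beta^{n-m}\leqslant C\int_0^\infty s^{m-1}cap_m(K\cap\{\varphi\leqslant-s\})\,ds$ only controls the mass on compacts $K\Subset\Omega$, and nothing in your construction prevents the masses of the $u_j$ from escaping to $\partial\Omega$. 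Indeed, with $u_j=\max(u,j\rho)$ the collar where $u_j=j\rho$ has width of order $1/j$ and carries Hessian mass of order $j^{m-1}$, so the total masses blow up in general; they must, since a uniform bound would force $u\in{\mathscr F}^m(\Omega)$, which already fails for $u\equiv-1\in{\mathscr P}^m_1(\Omega)$. The paper closes exactly this gap by applying the compact-set estimate not to approximants of $u$ but to the weighted extremal functions $h^\ast_{m,{\mathscr B},u_j}$ attached to a ball ${\mathscr B}\Subset\Omega$ and a Cegrell--Lu approximating sequence $u_j\downarrow u$: by Proposition 7 the measure $(dd^ch^\ast_{m,{\mathscr B},u_j})^m\wedge\beta^{n-m}$ is supported in $\overline{\mathscr B}$, so its total mass equals its mass on the compact $\overline{\mathscr B}$, which the ${\mathscr P}^m_1$ condition does bound via $\{h^\ast_{m,{\mathscr B},u_j}\leqslant-s\}\subset\{u\leqslant-s\}$. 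This gives $h^\ast_{m,{\mathscr B},u}\in{\mathscr F}^m(\Omega)$ with $h^\ast_{m,{\mathscr B},u}=u$ on ${\mathscr B}$, whence $u\in{\mathscr E}^m(\Omega)$ by the ``locally in ${\mathscr F}^m$'' characterization. Without this device (or some substitute converting the local integral condition into a global mass bound) your step ``one obtains a uniform bound'' does not follow.
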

\begin{proof}
 Let $u\in{\mathscr B}^{m}_{T}(\Omega)$ and ${\mathscr O}\Subset\Omega$. Then there exist $v\in{\cal {SH}}_{m}^{-}({\mathscr O})\cap L_{loc}^{1}(\sigma_{T})$ and a decreasing function $\varphi:\rb\rightarrow\rb_{+}$ satisfynig $(4.1)$ and such that $ t\mapsto-(-t\varphi(-t))^{\frac{1}{m}}$ is monotone increasing and convex on $ ]-\infty,0[ $  and $ u\geqslant -(-v\varphi(-v))^{\frac{1}{m}} $ on ${\mathscr O}\cap\Supp T.$ We have:  $$ \{u<-s\}\cap{\mathscr O}\cap\Supp T\subset \{-(-v\varphi(-v))^{\frac{1}{m}}< -s\}\cap\Supp T=\{-v\varphi(-v)> s^{m}\}\cap\Supp T.$$
Choose a function $ \psi $ such that $\psi^{'}=\varphi$ and $\psi(0)=0$. Since $\psi$
is concave we see that $\psi(t)\geqslant t \varphi(t)$ for all $t>0.$
In view of the later inclusion, we have $$\begin{array}{lcl} \ds\int_{0}^{+\infty}s^{m-1}cap_{m,T}(\{u<-s\}\cap {\mathscr O})\  ds&\leqslant&\ds\int_{0}^{+\infty}s^{m-1}cap_{m,T}(\{-v\varphi(-
v)>s^{m}\})\  ds \\ &\leqslant&\ds\int_{0}^{+\infty}s^{m-1} cap_{m,T}(\{\psi(-v)>s^{m}\})\ ds \\&=&\ds\int_{0}^{+\infty}s^{m-1} cap_{m,T}(\{v< -\psi^{-1}(s^{m})\} )\ ds \\ &\leqslant& C_{1}+C_{2}\ds\int_{1}^{+\infty}s^{m-1} \frac{1}{\psi^{-1}(s^{m})}\ ds\\&=&C_{1}+C_{2}\ds\int_{1}^{+\infty}\frac{\varphi(t)}{t}\ dt<\infty.
\end{array} $$
The third inequality is deduced from the following argument: taking into account the fact that ${\mathscr O}\Subset\Omega$, and $v\in{\cal {SH}}_{m}^{-}({\mathscr O})\cap L_{loc}^{1}(\sigma_{T})$ and combining the proof of Proposition 1.3.4 with inequality 1.2.8 in the same thesis \cite{ref13}. Concerning the later equality we use a change of variables. This completes the statement $ u\in{\mathscr P}^{m}_{T}(\Omega).$ Now, assume that $T=1$ and let $ u\in {\mathscr P}_{1}^{m}(\Omega)$. Since $ u\in{\cal {SH}}_{m}^{-}(\Omega),$ there exists a decreasing sequence $(u_{j})_{j}\subset{\mathscr E}_{0}^{m}(\Omega){\cap \mathscr {C}(\overline{\Omega})}$ to $u$. Take ${\mathscr B}={\mathscr B}_{z_0}(r)\Subset \Omega$ and for every $j\geqslant 1$ we consider the weighted extremal function $h_{m,{\mathscr B},u_{j}}$ relatively to $u_j$.  In virtue of Proposition 7, the sequence $ (h^{\ast}_{m,{\mathscr B},u_{j}})_{j} $ is monotone decreasing to $h^{\ast}_{m,{\mathscr B},u}$. We begin by showing that  $\sup_{j}\int_{\Omega} (dd^{c}h^{\ast}_{m,{\mathscr B},u_{j}})^{m}\wedge\beta^{n-m}<\infty.$ This is an easy consequence of an estimates of the complex Hessian operator in terms the $m$-capacity. In other word, it suffices to prove the existence of a constant $ C$ such that for every function $ \varphi\in {\cal {SH}}_{m}^{-}(\Omega)\cap L^{\infty}(\Omega), $ we have $$ \int_{K} (dd^{c}\varphi)^{m}\wedge\beta^{n-m}\leqslant C \int_{0}^{+\infty} s^{m-1} cap_{m}(K\cap\{\varphi\leqslant-s\})ds,\quad \forall K\Subset\Omega .$$
For this aim, a straightforward computation yields  $$\begin{array}{lcl}\ds\int_{K} (dd^{c}\varphi)^{m}\wedge\beta^{n-m}&=&\sum\limits_{-\infty}^{+\infty}\ds\int_{K\cap \{2^{k-1}\leqslant-\varphi<2^{k}\}}(dd^{c}\varphi)^{m}\wedge\beta^{n-m}\\&\leqslant&\sum\limits_{-\infty}^{+\infty}2^{km}cap_{m}(K\cap\{2^{k-1}\leqslant-\varphi< 2^{k}\})\\&\leqslant&C\sum\limits_{-\infty}^{+\infty}\ds\int_{2^{k-2}}^{2^{k-1}}s^{m-1}cap_{m}(K\cap\{-\varphi\geqslant s\}) ds\\&\leqslant&C\ds\int_{0}^{\infty}s^{m-1}cap_{m}(K\cap\{-\varphi\geqslant s\})  ds
\\&=& C\ds\int_{0}^{\infty}s^{m-1}cap_{m}(K\cap\{\varphi\leqslant -s\}) ds. \end{array}$$
Now, Proposition 7 with the above estimates for each $h^{\ast}_{m,{\mathscr B},u_{j}}$, yield
$$\begin{array}{lcl} \ds\int_{\Omega}(dd^{c}h^{\ast}_{m,{\mathscr B},u_{j}})^{m}\wedge\beta^{n-m}=\ds\int_{\overline{{\mathscr B}}}(dd^{c}h^{\ast}_{m,{\mathscr B},u_{j}})^{m}\wedge\beta^{n-m}&\leqslant& C\ds\int_{0}^{\infty}s^{m-1}cap_{m}(\overline{{\mathscr B}}\cap\{h^{\ast}_{m,{\mathscr B},u}\leqslant -s\})\ ds\\&\leqslant& C\ds\int_{0}^{\infty}s^{m-1}cap_{m}(\overline{{\mathscr B}}\cap\{u\leqslant -s\})\ ds <\infty.\end{array} $$
The first inequality because $ h^{\ast}_{m,{\mathscr B},u}\leqslant h^{\ast}_{m,{\mathscr B},u_j}$ while the third one is an immediate consequence of the fact that $u\in {\mathscr P}_{1}^{m}(\Omega).$ This leads to the statement $h^{\ast}_{m,{\mathscr B},u}\in{\mathscr F}^{m}(\Omega)$. Thus the proof was completed by observing that $h^{\ast}_{m,{\mathscr B},u}=u$ on  ${\mathscr B}$ and by using Remark 1.7.6 in \cite{ref13}.   \end{proof}
\subsection{Weighted $(m,T)$-capacity}
Let $ T\in{\mathscr C}_p^m(\Omega)$, $K$ a compact subset of $\Omega $ and $ u\in {\cal {SH}}_m^-(\Omega)$. We define the $(m,T)$-capacity with weight $u$ of $K$ by $$ cap_{m,T,u}(K)=\sup\left\{\int_{K}(dd^{c}v)^{m-p} \wedge\beta^{n-m}\wedge T ;\ v \in {\cal {SH}}_{m}(\Omega) \cap L^{\infty}(\Omega),\ u\leqslant v \leqslant 0\right\},$$ and for every $E\subset\Omega$, $cap_{m,T,u}(E)=\sup\{cap_{m,T,u}(K), K\subset E\ \rm{compact }\}$. If $E$ is Borel then $$cap_{m,T,u}(E)=\sup\left\{\int_{E}(dd^{c}v)^{m-p} \wedge\beta^{n-m}\wedge T ;\ v \in {\cal {SH}}_{m}(\Omega) \cap L^{\infty}(\Omega),\ u\leqslant v \leqslant 0\right\}.$$
In order to understanding this notion of weighted $(m,T)$-capacity let us recall the following comments:
\begin{rem} \begin{enumerate}
\item Unlike the case $u\equiv -1$, the quantity $cap_{m,T,u}(K)$ may be infinite even when $K$ is compact. However, by the Chern-Levine-Nirenberg inequality it is obvious that $cap_{m,T,u}(E)$ is finite provided that $u$ is locally bounded everywhere and $E\Subset\Omega$.
\item When $u=-1$, we get the capacity $cap_{m,T}$ studied by \cite{ref9} and if in addition $T=1$, we obtain the $m$-capacity, $cap_{m}$ of Lu \cite{lu article}. However, according to the terminology of \cite{Van Nguyen}, the weighted $(m,1)$-capacity is called weighted $m$-capacity and denoted by $cap_{m,u}$. When $m=n$ and $u\in\mathscr E(\Omega)$, we recover the weighted capacity investigated by \cite{ref15} and \cite{ref8}.
\item A simple computation yields that the weighted $(m,T)$-capacity shares the same properties of $cap_{m,T}$ as in Proposition 3. Moreover, clearly $cap_{m,T,u}$ is monotone decreasing with respect to $u$. In particular, if $u\geqslant -M$ for $M>0$ then we see that $\int_{E}(dd^{c}u)^{m-p} \wedge\beta^{n-m}\wedge T\leqslant cap_{m,T,u}(E)\leqslant cap_{m,T,-M}(E)=M^{m-p}cap_{m,T}(E)$ for every Borel subset $E\subset\Omega$.
\item Assume that $0\in\Omega$ and let $L$ be a complex linear space of dimension $p$ in $\cb^n$. By a unitary change of coordinates, we assume that $L=\cb^p\times\{0\}$. Let ${\mathscr O}$ be an open subset of $\Omega$ and $u\in{\cal {SH}}_m^-(\Omega)$. Select a bounded $m$-sh function $v$ such that $u\leqslant v\leqslant 0$. Then
$$\ds\int_{{\mathscr O}}
[L]\wedge\beta^{n-m}\wedge(dd^c v)^{p+m-n}=\ds\int_{{\mathscr O}\cap L}(i^{\star}\beta)^{p-(m+p-n)}\wedge(dd^c (i^{\star}v))^{p+m-n}.$$
By considering the current of integration on $L$, $T=[L]$ and using the fact that $u_{|{L\cap\Omega}}=i^{\star}u$ is $(m+p-n)$-sh, see \cite{ref17}, we deduce that the $(m,[L])$-weighted capacity of ${\mathscr O}$ on $\Omega$ with weight $u$ is nothing but the $m$-weighted capacity of $L\cap {\mathscr O}$ on $L\cap\Omega$ with weight $i^{\star}u$.
\end{enumerate}
\end{rem}
We now extend to this weighted $(m,T)$-capacity a monotonicity result due respectively to \cite{ref15} for $m=n$ and $T=1$ and to \cite{Van Nguyen} for $T=1$ and $m<n$.
\begin{pro}
Assume that $u_{j} ,u\subset {\cal {SH}}_m^-(\Omega)$ such that $(u_{j})_j$ is monotone decreasing to $u$. Let $T\in{\mathscr C}_p^m(\Omega),$ such that $\ds\lim_{s\l+\infty}cap_{m,T}(\{u<-s\})=0$. Then, $cap_{m,T,u_{j}}(E)$ is monotone increasing to $cap_{m,T,u}(E),$ for any subset $E\subset\Omega$ such that $cap_{m,T,u}(E)<+\infty$.\end{pro}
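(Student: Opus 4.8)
The plan is to establish the two inequalities separately. The bound $cap_{m,T,u_{j}}(E)\leqslant cap_{m,T,u}(E)$, as well as the monotonicity in $j$, is immediate from the monotonicity of the weighted capacity with respect to its weight recorded in Remark 3: since $u_{j}\geqslant u_{j+1}\geqslant u$ on $\Omega$, every $v\in{\cal {SH}}_{m}(\Omega)\cap L^{\infty}(\Omega)$ with $u_{j}\leqslant v\leqslant 0$ is admissible for $cap_{m,T,u_{j+1}}$ and for $cap_{m,T,u}$ as well. Thus $(cap_{m,T,u_{j}}(E))_{j}$ is nondecreasing and bounded above by the finite number $cap_{m,T,u}(E)$, and the whole matter reduces to proving $\lim_{j}cap_{m,T,u_{j}}(E)\geqslant cap_{m,T,u}(E)$.

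For the reverse inequality I would first reduce to a compact set: by the definition of the weighted capacity on an arbitrary subset (or the Borel formula when $E$ is Borel) one has $cap_{m,T,u}(E)=\sup\{cap_{m,T,u}(K):\ K\subset E\ \text{compact}\}$, and since $cap_{m,T,u_{j}}(E)\geqslant cap_{m,T,u_{j}}(K)$ it is enough to prove $\lim_{j}cap_{m,T,u_{j}}(K)\geqslant cap_{m,T,u}(K)$ for every such $K$. Fix $\varepsilon>0$ and choose $v\in{\cal {SH}}_{m}(\Omega)\cap L^{\infty}(\Omega)$ with $u\leqslant v\leqslant 0$ and $\int_{K}(\dc v)^{m-p}\wedge\b^{n-m}\wedge T>cap_{m,T,u}(K)-\varepsilon$; this integral is $\leqslant cap_{m,T,u}(E)$, hence finite. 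The key preparatory move is to replace $v$ by a competitor that stays \emph{strictly} above $u$. Excluding the trivial case $u\equiv 0$ (in which $u_{j}\equiv 0$ and every capacity involved vanishes), $u$ is subharmonic, $\leqslant 0$ and $\not\equiv 0$ on the connected domain $\Omega$, so by the strong maximum principle $u<0$ everywhere on $\Omega$. For $0<\eta<1$ the function $(1-\eta)v$ is $m$-sh, still satisfies $u\leqslant(1-\eta)v\leqslant 0$, and if $(1-\eta)v(z)=u(z)$ at some point, then $u(z)$ is finite and $u(z)\leqslant v(z)\leqslant(1-\eta)v(z)=u(z)$ forces $v(z)=0$, hence $u(z)=0$, which is impossible; therefore $(1-\eta)v>u$ on all of $\Omega$. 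Since $\int_{K}(\dc((1-\eta)v))^{m-p}\wedge\b^{n-m}\wedge T=(1-\eta)^{m-p}\int_{K}(\dc v)^{m-p}\wedge\b^{n-m}\wedge T$, after shrinking $\eta$ I may and do assume from now on that $v>u$ on $\Omega$ while still $\int_{K}(\dc v)^{m-p}\wedge\b^{n-m}\wedge T>cap_{m,T,u}(K)-\varepsilon$.

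Next I would set $v_{j}=\max(v,u_{j})$. As $v$ is bounded and $u_{j}\leqslant 0$, each $v_{j}$ is $m$-sh with $u_{j}\leqslant v_{j}\leqslant 0$ and $\|v_{j}\|_{\infty}\leqslant\|v\|_{\infty}$, hence admissible for $cap_{m,T,u_{j}}(K)$; in particular $cap_{m,T,u_{j}}(K)\geqslant\int_{K}(\dc v_{j})^{m-p}\wedge\b^{n-m}\wedge T$, and moreover $v_{j}\downarrow\max(v,u)=v$. On the (plurifinely open) set $\{v>u_{j}\}$ one has $v_{j}=v$, so by the locality of the complex Hessian operator $\int_{K}(\dc v_{j})^{m-p}\wedge\b^{n-m}\wedge T\geqslant\int_{K\cap\{v>u_{j}\}}(\dc v)^{m-p}\wedge\b^{n-m}\wedge T$. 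Since $u_{j}\downarrow u$ and $v>u$ pointwise on $\Omega$, the sets $\{v>u_{j}\}$ increase to $\Omega$, and as the measure $(\dc v)^{m-p}\wedge\b^{n-m}\wedge T$ is finite on $K$ the monotone convergence theorem yields $\int_{K\cap\{v>u_{j}\}}(\dc v)^{m-p}\wedge\b^{n-m}\wedge T\uparrow\int_{K}(\dc v)^{m-p}\wedge\b^{n-m}\wedge T$. Consequently $\liminf_{j}cap_{m,T,u_{j}}(K)\geqslant\int_{K}(\dc v)^{m-p}\wedge\b^{n-m}\wedge T>cap_{m,T,u}(K)-\varepsilon$; letting $\varepsilon\to 0$ gives $\lim_{j}cap_{m,T,u_{j}}(K)=cap_{m,T,u}(K)$, and passing to the supremum over compacts $K\subset E$ completes the argument.

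The place where the real work is concentrated is the contact set $\{v=u\}$: if the competitor $v$ were only $\geqslant u$, the sets $\{v>u_{j}\}$ would exhaust merely $\Omega\smallsetminus\{v=u\}$ and the limiting mass would drop by $\int_{K\cap\{v=u\}}(\dc v)^{m-p}\wedge\b^{n-m}\wedge T$, a quantity which need not vanish; the dilation $v\mapsto(1-\eta)v$ together with the maximum principle is precisely what disposes of it, so this is the main obstacle. The standing hypothesis $\lim_{s\to+\infty}cap_{m,T}(\{u<-s\})=0$ serves to make the unbounded weight harmless: it guarantees that the region where $u$ is very negative is $cap_{m,T}$-negligible, so that the truncations $\max(u,-s)$ and $\max(u_{j},-s)$ approximate $u$ and $u_{j}$ in $cap_{m,T}$, which is what one needs if one prefers to run the argument first for the uniformly bounded weights $\max(u_{j},-s)\downarrow\max(u,-s)$ and then let $s\to+\infty$.
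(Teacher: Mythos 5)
Your reduction to compact sets, the monotonicity/upper bound, and the dilation trick $v\mapsto(1-\eta)v$ (to force strict separation from $u$ off the set where $u\equiv 0$) are all fine, and the overall strategy -- compete with $v_j=\max(v,u_j)$ and recover the mass of $(dd^cv)^{m-p}\wedge\beta^{n-m}\wedge T$ on the exhausting sets $\{v>u_j\}\uparrow\Omega$ -- is genuinely different from the paper's. But the step on which everything rests is exactly the one you assert without justification: the identity $\1_{\{v>u_j\}}\,(dd^c\max(v,u_j))^{m-p}\wedge\beta^{n-m}\wedge T=\1_{\{v>u_j\}}\,(dd^cv)^{m-p}\wedge\beta^{n-m}\wedge T$. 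The set $\{v>u_j\}$ is \emph{not} open (both functions are merely upper semicontinuous), so this is not locality of a differential operator on an open set; it is a plurifine-locality (Demailly/Bedford--Taylor type) statement for the mixed operator $(dd^c\cdot)^{m-p}\wedge\beta^{n-m}\wedge T$ against a general $m$-positive closed current $T$. No such result is stated in this paper or in the cited reference \cite{ref9}, and proving it would require essentially the same quasicontinuity-with-respect-to-$cap_{m,T}$ machinery that the paper's own proof deploys directly. As written, the crux of the argument is a missing lemma.

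A symptom of the same problem is that your proof never uses the standing hypothesis $\lim_{s\to+\infty}cap_{m,T}(\{u<-s\})=0$; if your locality identity held unconditionally you would have proved a strictly stronger statement, which should at least give pause. The paper uses that hypothesis essentially: it combines it with quasicontinuity of $u$ and of all the $u_j$ with respect to $cap_{m,T}$ to produce a single open set $\mathscr O$ with $cap_{m,T}(\mathscr O)<\varepsilon$ off which every $u_j$ and $u$ is continuous and locally bounded; Dini's theorem then gives $u_{j_0}<(1-\varepsilon)u\leqslant(1-\varepsilon)\varphi$ on $\overline{\Omega'}\smallsetminus\mathscr O$ for some $j_0$, the competitor $\max((1-\varepsilon)\varphi,u_{j_0})$ is used for $cap_{m,T,u_{j_0}}$, and the loss on $\mathscr O$ is controlled by $(\sup_{\mathscr O}|\varphi|)^{m-p}cap_{m,T}(\mathscr O)$. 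If you want to salvage your route, you must either prove the plurifine-locality statement for $(dd^c\cdot)^{m-p}\wedge\beta^{n-m}\wedge T$ (note that, as you observe, one may replace $u_j$ by $\max(u_j,-s)$ with $s>\|v\|_\infty$ without changing $\max(v,u_j)$ or $\{v>u_j\}$, so it suffices in the bounded case), or fall back on the quasicontinuity argument -- at which point the hypothesis on $cap_{m,T}(\{u<-s\})$ re-enters in the same way as in the paper.
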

\begin{proof} Take $E\subset\Omega$ such that $cap_{m,T,u}(E)<+\infty$ and let $K\subset E$ be a compact set. By monotonicity of $(u_{j})_{j}$ and in view of the definition of the weighted $(m,T)$-capacity, it is clear that $ cap_{m,T,u_{j}}(K)\leqslant cap_{m,T,u_{j+1}}(K)\leqslant cap_{m,T,u}(K)<+\infty,\ \forall \emph{ } j\geqslant 1 $. Assume that $\alpha$ is a limit of $ cap_{m,T,u_{j}}(K)$, then it suffices to prove that $ \alpha \geqslant cap_{m,T,u}(K) $. For this goal, let $ \varphi \in {\cal {SH}}_{m}(\Omega)\cap L^{\infty}(\Omega) $ such that $ u\leqslant \varphi \leqslant 0 $. Since $\{u_j<-s\}\subset\{u<-s\}$, for every $j$, by the hypothesis for $\varepsilon>0$, we can find an open set ${\mathscr O}_1$ such that $cap_{m,T}({\mathscr O}_1)<\varepsilon/3$ and the functions $u_j,u$ are locally bounded on $\Omega\smallsetminus{\mathscr O}_1$. On the other hand, thanks to the quasicontinuity Theorem with respect to $cap_{m,T}$ (see \cite{ref9}), there exists an open set $ {\mathscr O}_2\subset\Omega,$ such that $ cap_{m,T}({\mathscr O}_2)< \frac{\varepsilon}{3} $ and $ u_{|{\Omega\smallsetminus {\mathscr O}_2}} $ is continuous. By the same raison, $ \forall j\geqslant 1 , \exists$ ${\mathscr O}_2^{j}\subset\Omega $ such that $ cap_{m,T}({\mathscr O}_2^{j})< \frac{\varepsilon}{3.2^{j+1}} $ and $ {u_j}_{|{\Omega\smallsetminus{\mathscr O}_2^{j}}} $ is continuous. Setting ${\mathscr O}={\mathscr O}_1\cup{\mathscr O}_2\cup \cup_{j=1}^{\infty}{\mathscr O}_2^{j} $ then since $cap_{m,T}$ is sub-additive we have $cap_{m,T}({\mathscr O})<\varepsilon $, $ {u_j}_{|\Omega\smallsetminus {\mathscr O}} $ and $ u_{|\Omega\smallsetminus{\mathscr O}} $ are continuous. Let $K\subset \Omega^{'}\Subset \Omega $, by the Dini's Theorem  $ u_{j} $ converges uniformly to $u$ on $ \overline{\Omega^{'}}\smallsetminus{\mathscr O} .$ So, there exists $ j_{0}\in\nb^\star $, such that on $\overline{\Omega^{'}}\smallsetminus{\mathscr O}$, we have $ u_{j_{0}}<(1-\varepsilon)u\leqslant (1-\varepsilon)\varphi.$ Thus, we have
$$\begin{array}{lcl} \alpha\geqslant cap_{m,T,u_{j_{0}}}(K)&\geqslant& \ds\int_{K}(dd^{c}\max ((1-\varepsilon)\varphi,u_{j_{0}}))^{m-p}\wedge\beta^{n-m}\wedge T\\ &\geqslant& \ds\int_{K\smallsetminus{\mathscr O}}(dd^{c}\max ((1-\varepsilon)\varphi,u_{j_{0}}))^{m-p}\wedge\beta^{n-m}\wedge T\\ &=& (1-\varepsilon)^{m-p}\ds\int_{K\smallsetminus{\mathscr O}}(dd^{c}\varphi)^{m-p}\wedge\beta^{n-m}\wedge T \\ &=&(1-\varepsilon)^{m-p}\left[\ds\int_{K}(dd^{c}\varphi)^{m-p}\wedge\beta^{n-m}\wedge T-\int_{{\mathscr O}}(dd^{c}\varphi)^{m-p}\wedge\beta^{n-m}\wedge T\right]\\ &\geqslant& (1-\varepsilon)^{m-p} \left[\ds\int_{K}(dd^{c}\varphi)^{m-p}\wedge\beta^{n-m}\wedge T- (\ds\sup_{{\mathscr O}}|\varphi|)^{m-p} cap_{m,T}({\mathscr O})\right ] \\ &\geqslant& (1-\varepsilon)^{m-p} \left[\ds\int_{K}(dd^{c}\varphi)^{m-p}\wedge\beta^{n-m}\wedge T-\varepsilon (\ds\sup_{{\mathscr O}}|\varphi| )^{m-p}\right].\end{array}$$
When $ \varepsilon\rightarrow 0 $, we deduce the inequality $\alpha\geqslant cap_{m,T,u_{j_{0}}}(K)\geqslant \int_{K}(dd^{c}\varphi)^{m-p}\wedge\beta^{n-m}\wedge T $ and therefore by arbitrariness of $\varphi$ and $K$, we easily see that $ \alpha\geqslant cap_{m,T,u}(E) $.\end{proof}
For two $m$-positive currents $S$ and $R$, we say that $S\prec R$ if and only if $R-S$ is $m$-positive. Recall that for a subset $E\subset\Omega$, the Hausdorff $s$-content of $E$ is defined by
$$\widehat{\mathscr H}_s(E)={\rm inf}\ds\sum_j r_j^s ,$$
where the infimum is taken over all coverings of $E$ by balls with radius $r_j$. It is not hard to see that $\widehat{\mathscr H}_s\leqslant {\mathscr H}_s$, where ${\mathscr H}_s$ is the Hausdorff $s$-measure. According to \cite{ref20}, it was proved that the capacity $cap_{m,T}$  can be controlled locally by the Hausdorff $\frac{2np}{m}$-content provided that $T\in{\mathscr C}_p^m(\Omega)$ and $T\prec\beta^p$. Here, we present the following similar result for weighted $(m,T)$-capacity:
\begin{pro} Assume that $u\in{\cal {SH}}_m^-(\Omega)\cap{\mathscr F}^{m-p}(\Omega)$ and $T\in{\mathscr C}_p^m(\Omega)$ such that $T\prec\beta^p,$ $n\geqslant m\geqslant p+1$. Then, for any $E\Subset\Omega$ there exists $A=A(E,m,n,p)>0$ such that $cap_{m,T,u}(F)\leqslant A\left(\int_\Omega(dd^cu)^{m-p}\wedge\beta^{n-m+p}\right)\widehat{\mathscr H}_{\frac{2np}{m}}(F)$, for any $F\Subset E$. In particular, if $\widehat{\mathscr H}_{\frac{2np}{m}}(F)=0$, then $F$ is not charged by $cap_{m,T,u}$.
\end{pro}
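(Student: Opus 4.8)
The plan is to combine the known control of the unweighted capacity $cap_{m,T}$ by the Hausdorff content (the result from \cite{ref20} recalled just above the statement) with the comparison in item (3) of Remark 4, which relates $cap_{m,T,u}$ to $cap_{m,T}$ when $u$ is bounded below. The subtlety is that $u\in{\mathscr F}^{m-p}(\Omega)$ need not be bounded, so the first step is a truncation: for $M>0$ set $u_M=\max(u,-M)\in{\cal {SH}}_m^-(\Omega)\cap L^\infty(\Omega)$. By the monotonicity of $cap_{m,T,u}$ in the weight (Remark 4(3)) we have $cap_{m,T,u}\le cap_{m,T,u_M}$, and since $u_M\ge -M$, Remark 4(3) gives $cap_{m,T,u_M}(F)\le M^{m-p}cap_{m,T}(F)$. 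Hence for every $M>0$,
\[
cap_{m,T,u}(F)\le M^{m-p}\,cap_{m,T}(F).
\]
This is already not quite what we want, because letting $M\to 0$ is illegitimate (the truncation is of $u$, not a rescaling), so the $M$-dependence must be absorbed differently — namely by tracking it through the Hausdorff-content estimate.

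The second step is to re-examine the proof (from \cite{ref20}) that $cap_{m,T}(F)\le C\,\widehat{\mathscr H}_{2np/m}(F)$ for $F\Subset E$, and to make the constant explicit in terms of the total mass of $T$ and the competitor functions. The point is that the capacity of a small ball $\mathscr B(z_0,r)$ is estimated using a fixed test function (a rescaled version of $-|z-z_0|^{-2(n/m-1)}$ or a logarithmic-type barrier, cut off at scale comparable to $\mathrm{dist}(E,\partial\Omega)$), and the resulting bound is $cap_{m,T}(\mathscr B(z_0,r))\le C(E,m,n,p)\,\sigma_T(E')\,r^{2np/m}$ for a slightly larger $E'\Subset\Omega$. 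Summing over a near-optimal covering of $F$ by balls of radii $r_j$ gives $cap_{m,T}(F)\le C\,\sigma_T(E')\,\widehat{\mathscr H}_{2np/m}(F)$. Now replace $T$ by the operator $(dd^cu_M)^{m-p}\wedge\beta^{n-m}\wedge$\,(identity) in this argument — more precisely, apply the Hausdorff-content control not to $cap_{m,T}$ but directly to the functional $F\mapsto\int_F(dd^c v)^{m-p}\wedge\beta^{n-m}\wedge T$ for $u_M\le v\le 0$: bounding $(dd^cv)^{m-p}\wedge\beta^{n-m}\wedge T$ on a small ball by the Chern--Levine--Nirenberg inequality \cite{ref13} against the mass of $T\wedge\beta^{m-p}$ times $\|v\|^{m-p}_{L^\infty}\le M^{m-p}$, and then running the same covering argument. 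The hypothesis $T\prec\beta^p$ is what converts the trace $\sigma_T$ into a multiple of Lebesgue measure on $E'$, so that the local mass on $\mathscr B(z_0,r)$ is $O(r^{2p})$; combined with the CLN factor coming from $(dd^cv)^{m-p}$ this produces the exponent $2np/m$ exactly as in \cite{ref20}.

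The third step is to replace the crude bound $\|v\|^{m-p}_{L^\infty}\le M^{m-p}$, which still carries the bad $M$, by the sharp global bound $\int_\Omega (dd^cu)^{m-p}\wedge\beta^{n-m+p}$ appearing in the statement. Here one uses that $u\in{\mathscr F}^{m-p}(\Omega)$: by the defining property of ${\mathscr F}^{m-p}$ recalled in the Preliminaries (item (3) of the list after the definition of the Cegrell classes), the total mass $\int_\Omega(dd^cu)^{m-p}\wedge\beta^{n-m+p}$ is finite, and a comparison-principle/integration-by-parts argument (the generalized Xing-type comparison, or simply Lemma 2 applied on a large ball, together with $u_M\le v\le 0$ and $u_M\downarrow u$) yields
\[
\int_\Omega(dd^cu_M)^{m-p}\wedge\beta^{n-m+p}\le \int_\Omega(dd^cu)^{m-p}\wedge\beta^{n-m+p}.
\]
Thus in the covering estimate the ``$M^{m-p}\sigma_T(E')$'' is replaced by ``$\big(\int_\Omega(dd^cu)^{m-p}\wedge\beta^{n-m+p}\big)$'' (with a geometric constant absorbing the $\beta^p\succ T$ comparison and the cutoff scale), uniformly in $M$. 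Taking the supremum over $v$ and then over $M$, i.e. over the Borel definition of $cap_{m,T,u}(F)$ and over truncations, gives
\[
cap_{m,T,u}(F)\le A(E,m,n,p)\Big(\int_\Omega(dd^cu)^{m-p}\wedge\beta^{n-m+p}\Big)\,\widehat{\mathscr H}_{\frac{2np}{m}}(F),
\]
which is the assertion; the final sentence (sets of vanishing $\tfrac{2np}{m}$-content are $cap_{m,T,u}$-null) is immediate.

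The main obstacle is the uniformity in the truncation level $M$: a naive application of Remark 4(3) loses control as the truncation is refined, so the $M$-free bound must be extracted from inside the Hausdorff-content argument by using the CLN inequality at the level of the competitor $(dd^cv)^{m-p}\wedge\beta^{n-m}\wedge T$ rather than at the level of the capacity, and then replacing the $L^\infty$-norm of $v$ by the finite global ${\mathscr F}^{m-p}$-mass of $u$ via monotonicity of masses. The role of $T\prec\beta^p$ is purely to guarantee the correct scaling ($r^{2p}$ for the $T$-mass of a ball), without which the exponent $\tfrac{2np}{m}$ would not come out.
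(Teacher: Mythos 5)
Your overall shape (near-optimal covering of $F$ by small balls, a local estimate of the Hessian measure of each competitor $v$ on a ball, and a global mass bound coming from the Cegrell class to make the estimate uniform in $v$) matches the paper, and you correctly spot the two real ingredients: the domination $T\prec\beta^p$, which lets one replace $(dd^cv)^{m-p}\wedge\beta^{n-m}\wedge T$ by $(dd^cv)^{m-p}\wedge\beta^{n-m+p}$, and a Proposition 5.2-type comparison of total Hessian masses in ${\mathscr F}^{m-p}(\Omega)$. But there is a genuine gap at the heart of your argument: the local estimate on a ball. The paper does \emph{not} use the Chern--Levine--Nirenberg inequality there; it uses the monotonicity (from \cite{ref20}) of $r\mapsto r^{-\frac{2np}{m}}\int_{\mathscr B_r(a)}(dd^cv)^{m-p}\wedge\beta^{n-m+p}$, which gives
$$\int_{\mathscr B_{r_j}(a_j)}(dd^cv)^{m-p}\wedge\beta^{n-m+p}\leqslant \Bigl(\tfrac{r_j}{\delta}\Bigr)^{\frac{2np}{m}}\int_{\Omega}(dd^cv)^{m-p}\wedge\beta^{n-m+p},$$
with no $L^\infty$-norm of $v$ at all; the factor $\int_\Omega(dd^cv)^{m-p}\wedge\beta^{n-m+p}$ is then bounded by $\int_\Omega(dd^cu)^{m-p}\wedge\beta^{n-m+p}$ because $u\leqslant v=\max(u,v)\in{\mathscr F}^{m-p}(\Omega)$ and one applies the mass comparison to the \emph{competitor} $v$, not to truncations of $u$. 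Your CLN route inevitably produces $\|v\|_{L^\infty}^{m-p}\,\sigma_T(\mathscr B_{2r})$, i.e. exactly the truncation level $M^{m-p}$ you are trying to eliminate, and your third step only asserts that this factor "is replaced" by the global Hessian mass of $u$ uniformly in $M$ without supplying the mechanism; the displayed inequality $\int_\Omega(dd^cu_M)^{m-p}\wedge\beta^{n-m+p}\leqslant\int_\Omega(dd^cu)^{m-p}\wedge\beta^{n-m+p}$ concerns $u_M$, not the arbitrary competitor $v$, and does not bridge that gap. Without the Lelong-number-type monotonicity (or an equivalent scaling estimate for the $m$-Hessian measure of $v$ on small balls that is free of $\|v\|_{L^\infty}$), the exponent $\frac{2np}{m}$ and the uniformity over $v$ do not come out.

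Two secondary inaccuracies: the monotonicity in your first step is reversed --- since $cap_{m,T,u}$ is decreasing in the weight and $u\leqslant u_M$, one has $cap_{m,T,u_M}\leqslant cap_{m,T,u}$, not the inequality you wrote (so even that preliminary reduction is unavailable, though you abandon it anyway; in fact no truncation of $u$ is needed, since the competitors in the definition of $cap_{m,T,u}$ are already bounded); and $T\prec\beta^p$ gives $\sigma_T(\mathscr B_r)=O(r^{2n})$ (trace dominated by Lebesgue measure), not $O(r^{2p})$, which again shows that the exponent $\frac{2np}{m}$ cannot be produced by the trace of $T$ plus CLN in the way you describe.
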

\begin{proof} We use a method from \cite{ref20}. For any $\varepsilon>0$, take a cover of $F$ by balls $\mathscr B_{r_j}(a_j)$ such that $$\sum_j r_j^{\frac{2np}{m}}<\widehat{\mathscr H}_{\frac{2np}{m}}(F)+\varepsilon.$$
 Let $E_{r}=\{z\in\Omega:\ d(z,E)<r\}$ and choose $\delta>0$, so that $E_{2\delta}\Subset\Omega$. Observe that in the preceding cover of $F$, we select only balls $\mathscr B_{r_j}(a_j)$ such that $r_j<\delta$ because in that case we have $a_j\in E_\delta$, otherwise we do not consider such ball and the cover is still hold. Let $v$ be a bounded $m$-sh function with $u\leqslant v\leqslant 0$. Since $T\prec\beta^p,$ it is clear that $(dd^c v)^{m-p}\wedge\beta^{n-m}\wedge T\leqslant(dd^c v)^{m-p}\wedge\beta^{n-m+p}$ as positive measures. On the other hand, by \cite{ref20} the function $r\mapsto r^{-\frac{2np}{m}}\int_{\mathscr B_{r}(a)}(dd^c v)^{m-p}\wedge\beta^{n-m+p}$ is monotone increasing. Hence, for each ball $\mathscr B_{r_j}(a_j)$ such that $r_j<\delta$, one has
$$
\begin{array}{lcl} \ds\int_{F\cap \mathscr B_{r_j}(a_j)}(dd^c v)^{m-p}\wedge\beta^{n-m}\wedge T&\leqslant&\ds\int_{F\cap \mathscr B_{r_j}(a_j)}(dd^c v)^{m-p}\wedge\beta^{n-m+p}\\&\leqslant& \frac{r_j^{\frac{2np}{m}}}{\delta^{\frac{2np}{m}}}\ds\int_{E_{2\delta}}(dd^cv)^{m-p}\wedge\beta^{n-m+p}\\&\leqslant&\frac{r_j^{\frac{2np}{m}}}{\delta^{\frac{2np}{m}}}\ds\int_{\Omega}(dd^cv)^{m-p}\wedge\beta^{n-m+p}\\&\leqslant&\frac{r_j^{\frac{2np}{m}}}{\delta^{\frac{2np}{m}}}\ds\int_\Omega(dd^cu)^{m-p}\wedge\beta^{n-m+p}.\end{array}$$
The second inequality is clearly satisfied when $r_j\geqslant\delta$ while the later one is deduced from Proposition 5.2 in \cite{ref12} because $u,\max(u,v)=v\in{\mathscr F}^{m-p}(\Omega)$. This implies that for any compact set $K\subset F$, we have
 $$
 \begin{array}{lcl}
 \ds\int_K(dd^cv)^{m-p}\wedge\beta^{n-m}\wedge T&\leqslant& \ds\sum_j\int_{F\cap {\mathscr B}_{r_j}(a_j)}(dd^c v)^{m-p}\wedge\beta^{n-m}\wedge T\\&\leqslant&\frac{1}{\delta^{\frac{2np}{m}}}\left(\ds\int_\Omega(dd^cu)^{m-p}\wedge\beta^{n-m+p}\right)\left(\ds\sum_jr_j^{\frac{2np}{m}}\right)\\&\leqslant& \frac{1}{\delta^{\frac{2np}{m}}}\left(\ds\int_\Omega(dd^cu)^{m-p}\wedge\beta^{n-m+p}\right)\left(\widehat{\mathscr H}_{\frac{2np}{m}}(F)+\varepsilon\right).
  \end{array}
 $$
Taking firstly the supremum over all such functions $v$, and secondly over all compact sets of $F$, we get the desired inequality.\end{proof}
Next we introduce the weighted outer $m$-capacity for a weight $u\in{\mathscr E}^m(\Omega)$ by setting
$$cap_{m,u}^\ast(E)=\inf\left\{cap_{m,u}({\mathscr O}) :\  {\mathscr O}\supset E,\ {\mathscr O}\ {\rm is\ open} \right\}.$$
It is not difficult to see that the most elementary properties of the weighted $m$-capacity are still hold for the outer weighted $m$-capacity. Especially, when the weight is identically $-1$, the weighted $m$-capacity is crucial in the characterization of $m$-polar subsets. Recently, in \cite{Van Nguyen} the author has proved that there is a direct relation between the weighted  $m$-capacity $ cap_{m,u} $ and the weighted $m$-extremal function $ h_{m,E,u}^{\ast}$ for any compact $E\subset \Omega$. Continuing this way, we get our first main comparison in this section:
\vskip0.15cm
\noindent{\bf Theorem C.} {\it Let $\Omega$ be a bounded $m$-hyperconvex open subset of $\cb^n$. We have
\begin{enumerate}
\item If $u\in {\mathscr E}^{m}(\Omega) $ then $$cap_{m,u}^\ast\left({\ds\mathop  E^\circ}\right)\leqslant\int_{\Omega}(dd^{c}h_{m,E,u}^{\ast})^{m}\wedge\beta^{n-m}\leqslant cap_{m,u}^\ast(\overline E),\quad for\ every\ E\Subset\Omega.$$
\item If $u\in{\mathscr F}^{m}(\Omega)$ then $$\int_\Omega(dd^ch_{m,E,u}^\ast)^m\wedge\beta^{n-m}\leqslant cap_{m,u}^\ast(E)\leqslant\int_\Omega(dd^cu)^m\wedge\beta^{n-m},\quad for\ every\ E\subset\Omega.$$
In particular, we have $cap_{m,u}(\Omega)=\int_\Omega(dd^cu)^m\wedge\beta^{n-m}.$
\end{enumerate}}
\vskip0.15cm
 Notice that estimates (1) is due to \cite{ref8} for the complex setting $m=n$. On the other hand, from (2) we see that if $E\Subset\Omega$ then $cap_m^\ast(E)$ coincides with the total weighted $m$-capacity associated to $h^\ast_{m,E,\Omega}$: the standard $m$-extremal function associated to $E$. More precisely, we have $cap_{m,h^\ast_{m,E,\Omega}}(\Omega)=\int_\Omega(dd^ch^\ast_{m,E,\Omega})^m\wedge\beta^{n-m}=cap_m^\ast(E)$, where the last equality is due to Theorem 1.6.3 in \cite{ref13}. Moreover, assume that $u\in{\mathscr F}^{m}(\Omega)$ and $E\subset\Omega$ such that $cap_{m,u}^\ast(E)=0$, then it is an immediate consequence of the left hand inequality in (2) and Remark 5.3 in \cite{ref12} that $h_{m,E,u}^\ast\equiv 0$. Thanks to the right hand inequality of (1), the same conclusion remains true if we assume that $cap_{m,u}^{\ast}(\overline E)=0$ for a given $u\in {\mathscr E}^{m}(\Omega) $ and $E\Subset\Omega$.  Conversely, if $h_{m,E,u}^\ast\equiv 0$ then the left hand inequality of (1) implies $cap_{m,u}^\ast\left({\ds\mathop  E^\circ}\right)=0$ provided that $u\in {\mathscr E}^{m}(\Omega) $ and $E\Subset\Omega$.
Before presenting the proof of Theorem C, let us recall some further meaningful estimates and applications. We begin with the following estimate due to \cite{Van Nguyen} (see Theorem 3.1):
 \begin{cor} Let $u\in{\mathscr F}^{m}(\Omega)$. Then for any $s>0$, we have $$s^mcap_{m}(\{u<-s\})\leqslant cap_{m,u}^\ast(\{u<-s\})\leqslant  \int_\Omega (dd^cu)^m\wedge\beta^{n-m}.$$
  \end{cor}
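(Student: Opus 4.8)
The plan is to prove the two inequalities separately: the right-hand one will be an immediate consequence of Theorem C, while the left-hand one follows from an elementary comparison. Fix $s>0$ and set $U=\{u<-s\}$. Since $u$ is $m$-sh, it is upper semicontinuous, so $U$ is open; as the weighted $(m,1)$-capacity $cap_{m,u}$ enjoys the monotonicity listed in Proposition 3, one has $cap_{m,u}^\ast(U)=cap_{m,u}(U)$. For the upper bound $cap_{m,u}^\ast(U)\leqslant\int_\Omega(dd^cu)^m\wedge\beta^{n-m}$ I would simply apply the right-hand inequality of Theorem C(2) with $E=U$. Alternatively one can argue directly: for any bounded $v\in{\cal {SH}}_m(\Omega)$ with $u\leqslant v\leqslant 0$ one has $v=\max(u,v)\in{\mathscr F}^m(\Omega)$, whence $\int_U(dd^cv)^m\wedge\beta^{n-m}\leqslant\int_\Omega(dd^cv)^m\wedge\beta^{n-m}\leqslant\int_\Omega(dd^cu)^m\wedge\beta^{n-m}$ by the comparison of total masses in ${\mathscr F}^m(\Omega)$, and the supremum over $v$ yields the claim.

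For the lower bound, let $w\in{\cal {SH}}_m(\Omega)$ with $-1\leqslant w\leqslant 0$ be an arbitrary admissible function for $cap_m(U)$, and put $v=\max(sw,u)$. Since $s>0$, the function $sw$ is $m$-sh and $-s\leqslant sw\leqslant 0$, so $v\in{\cal {SH}}_m(\Omega)$ by Proposition 1(4); moreover $-s\leqslant sw\leqslant v\leqslant 0$, so $v$ is bounded, and $u\leqslant v\leqslant 0$, hence $v$ is an admissible competitor in the definition of $cap_{m,u}(U)$. On the open set $U$ we have $u<-s\leqslant sw$, so $v\equiv sw$ there, and by locality of the complex Hessian operator $(dd^cv)^m\wedge\beta^{n-m}=s^m(dd^cw)^m\wedge\beta^{n-m}$ on $U$. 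Therefore
$$s^m\int_U(dd^cw)^m\wedge\beta^{n-m}=\int_U(dd^cv)^m\wedge\beta^{n-m}\leqslant cap_{m,u}(U),$$
and taking the supremum over all such $w$ gives $s^m\,cap_m(U)\leqslant cap_{m,u}(U)=cap_{m,u}^\ast(U)$, which is the asserted left-hand inequality.

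The argument is essentially routine, and the few steps that deserve attention are: checking that the test function $v=\max(sw,u)$ is genuinely bounded (this is why one truncates $u$ against $sw$ rather than directly against the constant $-s$, and it rests on $v\geqslant sw\geqslant -s$); noting that $U$ is open so that $cap_{m,u}$ and $cap_{m,u}^\ast$ agree on it; and the fact that the Hessian measure of $v$ restricts, on the open set where $v=sw$, to $s^m(dd^cw)^m\wedge\beta^{n-m}$. I do not anticipate a genuine obstacle here; the only place where the hypothesis $u\in{\mathscr F}^m(\Omega)$ is really needed is the upper bound, through Theorem C (finiteness and comparison of the total Hessian mass).
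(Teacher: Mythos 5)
Your argument is correct, and while the right-hand bound coincides with the paper's (both are just the right-hand inequality of Theorem C(2), or equivalently the mass comparison $\int_\Omega(dd^cv)^m\wedge\beta^{n-m}\leqslant\int_\Omega(dd^cu)^m\wedge\beta^{n-m}$ for bounded competitors $v=\max(u,v)\in{\mathscr F}^m(\Omega)$), your proof of the left-hand bound takes a genuinely different route. The paper first reduces, by monotonicity of both capacities and Theorem 3.1 in \cite{lu article}, to $u\in{\mathscr E}_0^m(\Omega)\cap{\mathscr C}(\overline\Omega)$ (so that $\{u<-s\}\Subset\Omega$), and then works with extremal functions: any negative $m$-sh $v$ with $v\leqslant u$ on $\{u<-s\}$ satisfies $v\leqslant s\,h^\ast_{m,\{u<-s\},\Omega}$, hence $h^\ast_{m,\{u<-s\},u}\leqslant s\,h^\ast_{m,\{u<-s\},\Omega}$, and the inequality follows from the identities $cap_m(\{u<-s\})=\int_\Omega(dd^ch^\ast_{m,\{u<-s\},\Omega})^m\wedge\beta^{n-m}$ (Corollary 1.4.13 in \cite{ref13}), $cap_{m,u}^\ast(\{u<-s\})=\int_\Omega(dd^ch^\ast_{m,\{u<-s\},u})^m\wedge\beta^{n-m}$ (Theorem C and Remark 4), and the mass comparison of Proposition 5.2 in \cite{ref12}. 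You instead exhibit, for each competitor $w$ of $cap_m$, the bounded competitor $\max(sw,u)$ of $cap_{m,u}$, use that it equals $sw$ on the open set $\{u<-s\}$ together with locality of the Hessian measure and the Borel-set formula for the weighted capacity, and take suprema; this is more elementary, needs no extremal-function machinery (Proposition 11, Lu's identity) and no preliminary reduction ensuring relative compactness of the sublevel set, so it applies verbatim to any $u\in{\mathscr F}^m(\Omega)$. What the paper's route buys in exchange is the sharper intermediate relation $h^\ast_{m,\{u<-s\},u}\leqslant s\,h^\ast_{m,\{u<-s\},\Omega}$ and the identification of both capacities as total Hessian masses of extremal functions, in keeping with the theme of Theorem C.
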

  \begin{proof} By monotonicity of $m$-capacity and weighted $m$-capacity and thanks to Theorem 3.1 in \cite{lu article}, we may assume that $u\in{\mathscr E}_0^m(\Omega)\cap{\mathscr C}(\overline\Omega)$. Let $v$ be a negative $m$-sh function satisfying $v\leqslant u$ on $\{u<-s\}$, then $v/s\leqslant u/s<-1$ on $\{u<-s\}$ and consequently we have $v\leqslant sh_{m,\{u<-s\},\Omega}^\ast$. This implies that $h_{m,\{u<-s\},u}^\ast\leqslant sh_{m,\{u<-s\},\Omega}^\ast$. It follows from Corollary 1.4.13 in \cite{ref13}, Proposition 5.2 in \cite{ref12}  and Theorem C that
$$\begin{array}{lcl}s^mcap_{m}(\{u<-s\})&=&s^m\ds\int_\Omega(dd^ch_{m,\{u<-s\},\Omega}^\ast)^m\wedge\beta^{n-m}\\&\leqslant& \ds\int_\Omega(dd^ch_{m,\{u<-s\},u}^\ast)^m\wedge\beta^{n-m}\\&=&cap_{m,u}^\ast(\{u<-s\})\leqslant\ds\int_\Omega(dd^cu)^m\wedge\beta^{n-m}.\end{array}$$
\end{proof}
Assume that $u\in{\mathscr E}_0^m(\Omega)$ and $K$ is a compact set. Starting from the equality $cap_{m,u}(K)=\int_{\Omega}(dd^{c}h_{m,K,u}^{\ast})^{m}\wedge\beta^{n-m}$ (see \cite{Van Nguyen}), and by using Proposition 6 in \cite{ref19}, an adaptation of the proof of Proposition 3.1 in \cite{ref8} yields  that the weighted $m$-capacity $cap_{m,u}$ of sublevel set associated to a given $v\in{\cal F}^m(\Omega)$ decreases at least like $s^{-m}$i.e. there is a constant $A>0$ such that
$$cap_{m,u}\left(\{v<-s\}\right)\leqslant As^{-m},\qquad \forall s>0.$$
Denote by ${\mathscr F}^{m,a}(\Omega)$ the subclass of functions in ${\mathscr F}^{m}(\Omega)$ which have vanishing Hessian measures on $m$-polar sets of $\Omega$. The main consequence of Theorem C is the following characterizations of the Cegrell classes ${\mathscr E}^m(\Omega)$ and ${\mathscr F}^m(\Omega)$ by means of weighted $m$-capacity:
\begin{cor} Let $\Omega $ be a bounded $m$-hyperconvex open subset of $\cb^n,$ $u\in{\mathcal {SH}}_m^-(\Omega)$.\begin{enumerate}
\item Let $E$ be an open set of $\Omega$. If $h_{m,E,u}^\ast\in{\mathscr F}^{m}(\Omega)$ then $cap_{m,h_{m,E,u}^\ast}(E)<+\infty$. Conversely, if $cap_{m,u}(E)<+\infty$ then $h_{m,E,u}^\ast\in{\mathscr F}^{m}(\Omega)$. When $E=\Omega$, we have
$${\mathscr F}^{m}(\Omega)=\left\{u\in{\mathcal {SH}}_m^-(\Omega):\ cap_{m,u}(\Omega)<+\infty\right\}.$$ In particular, if $cap_{m,u}(\Omega)<+\infty$ and $\ds\lim_{s\l +\infty}cap_{m,u}(\{u< -s\})=0$ then $u\in{\mathscr F}^{m,a}(\Omega)$.
\item $u\in{\mathscr E}^m(\Omega)$ if and only if $cap_{m,u}(K)<+\infty$ for every compact set $K$ of $\Omega$.
\end{enumerate}
\end{cor}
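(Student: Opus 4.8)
The plan is to deduce both parts from Theorem C together with the standard stability features of the Cegrell classes, proving first the open--set form of (1) and then reading off (2) from it. The only external input I will use is the fact (already invoked in the proof of Proposition 8) that on an $m$-hyperconvex $\Omega$ every $u\in{\cal {SH}}_m^-(\Omega)$ is the decreasing limit of some $(\varphi_j)_j\subset{\mathscr E}_0^m(\Omega)\cap{\mathscr C}(\overline\Omega)$.

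For the first implication in (1), I would set $w:=h_{m,E,u}^\ast$ and apply the right--hand inequality of Theorem C(2) with weight $w$ and Borel set $E$: this gives $cap_{m,w}(E)\leqslant cap_{m,w}^\ast(E)\leqslant\int_\Omega(dd^cw)^m\wedge\beta^{n-m}$, finite because $w\in{\mathscr F}^m(\Omega)$. For the converse I would put $\psi_j:=h_{m,E,\varphi_j}^\ast$; since $\varphi_j$ is admissible in the supremum defining $h_{m,E,\varphi_j}$ one has $\psi_j\geqslant\varphi_j$, so $\psi_j=\max(\varphi_j,\psi_j)\in{\mathscr F}^m(\Omega)$ by $\max$-stability of ${\mathscr F}^m(\Omega)$ and $\varphi_j\in{\mathscr E}_0^m(\Omega)$, while $\psi_j\downarrow h_{m,E,u}^\ast$ by Proposition 7(3). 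The key point is then the left--hand inequality of Theorem C(2) with weight $\varphi_j\in{\mathscr F}^m(\Omega)$: it yields $\int_\Omega(dd^c\psi_j)^m\wedge\beta^{n-m}\leqslant cap_{m,\varphi_j}^\ast(E)=cap_{m,\varphi_j}(E)\leqslant cap_{m,u}(E)$, using that $E$ is open for the middle equality and that the weighted capacity decreases in its weight (with $\varphi_j\geqslant u$) for the last. Hence the total Hessian masses of the $\psi_j$ are bounded by $cap_{m,u}(E)$, and since ${\mathscr F}^m(\Omega)$ is closed under decreasing limits of uniformly bounded total mass \cite{ref13}, $h_{m,E,u}^\ast\in{\mathscr F}^m(\Omega)$. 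Taking $E=\Omega$ (where $h_{m,\Omega,u}^\ast=u$) gives ${\mathscr F}^m(\Omega)=\{u\in{\cal {SH}}_m^-(\Omega):cap_{m,u}(\Omega)<+\infty\}$, the reverse inclusion being the ``in particular'' clause of Theorem C(2).

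For the ${\mathscr F}^{m,a}$ clause I would argue, assuming moreover $\lim_{s\to+\infty}cap_{m,u}(\{u<-s\})=0$, that for any $m$-polar $P$ the measure $(dd^cu)^m\wedge\beta^{n-m}$ carries no mass on $P$: on the open set $\{u>-s\}$ it coincides with the Hessian measure of the bounded function $\max(u,-s)$, which does not charge $P$ by quasicontinuity, so $s\to+\infty$ disposes of $P\cap\{u>-\infty\}$; and combining $\max(u,-s)\downarrow u$, the weak convergence of the Hessian measures and the admissibility of $\max(u,-s)$ for $cap_{m,u}(\{u<-t\})$ gives $(dd^cu)^m\wedge\beta^{n-m}(\{u<-t\})\leqslant cap_{m,u}(\{u<-t\})$, whence $(dd^cu)^m\wedge\beta^{n-m}(\{u=-\infty\})=0$ as $t\to+\infty$. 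For (2), the implication $u\in{\mathscr E}^m(\Omega)\Rightarrow cap_{m,u}(K)<+\infty$ I would get by choosing $v\in{\mathscr F}^m(\Omega)$ with $u=v$ on a neighborhood $\omega$ of $K$ and $(v_j)_j\subset{\mathscr E}_0^m(\Omega)$, $v_j\downarrow v$: for every bounded $m$-sh $w$ with $u\leqslant w\leqslant0$ the function $\max(w,v_j)$ is bounded, lies between $v$ and $0$, hence is admissible for $cap_{m,v}$, and decreases on $\omega$ to $w$, so Proposition 2 and Theorem C(2) give $\int_U(dd^cw)^m\wedge\beta^{n-m}\leqslant\int_\Omega(dd^cv)^m\wedge\beta^{n-m}$ for $K\subset U\Subset\omega$ open, and a supremum over $w$ finishes it. For the reverse implication, given $cap_{m,u}(K)<+\infty$ for all compact $K$, I would fix $z_0$, take an open ball $B\ni z_0$ with $\overline B\Subset\Omega$, apply the open--set converse of (1) to get $h_{m,B,u}^\ast\in{\mathscr F}^m(\Omega)$, and note that a ${\mathscr E}_0^m(\Omega)$-approximation of $h_{m,B,u}^\ast$ with bounded masses decreases to $h_{m,B,u}^\ast=u$ on the open set $B$, which is exactly the local datum at $z_0$ in the definition of ${\mathscr E}^m(\Omega)$.

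The part I expect to be most delicate is the organisation of (1) for a general \emph{open} $E$ rather than $E\Subset\Omega$ (so Proposition 7(4) does not apply directly): verifying that $\psi_j\in{\mathscr F}^m(\Omega)$ and that the decreasing--limit stability of ${\mathscr F}^m(\Omega)$ is legitimately invoked, and, relatedly, the short measure--theoretic argument showing $(dd^cu)^m\wedge\beta^{n-m}$ gives no mass to $\{u=-\infty\}$ in the ${\mathscr F}^{m,a}$ clause; beyond that the proof is just monotonicity of the weighted capacity and the usual $\max$-approximation built on Theorem C.
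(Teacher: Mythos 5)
Your argument is correct, and its skeleton is the one the paper uses: approximate $u$ by $(\varphi_j)\subset{\mathscr E}_0^m(\Omega)\cap{\mathscr C}(\overline\Omega)$, note $h^\ast_{m,E,\varphi_j}\in{\mathscr E}_0^m(\Omega)$ by max-stability, use Proposition 7(3) for the decrease to $h^\ast_{m,E,u}$, bound the total Hessian masses via the left inequality of Theorem C(2), and for $E=\Omega$ use $h^\ast_{m,\Omega,u}=u$; the reverse implication of (2) is also handled exactly as in the paper. Where you genuinely diverge is in three places, all legitimately. First, you bound $cap_{m,\varphi_j}(E)\leqslant cap_{m,u}(E)$ by plain monotonicity of the weighted capacity in the weight, whereas the paper invokes Proposition 9 to get $\sup_j cap_{m,u_j}(E)=cap_{m,u}(E)$; only the inequality is needed, so your route is a small simplification. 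Second, for the ${\mathscr F}^{m,a}$ clause the paper goes through Corollary 1 (giving $s^m cap_m(\{u<-s\})\to 0$) and then cites Corollary 3.2 of \cite{Van Nguyen}, while you argue directly: locality of the Hessian measure ($\mathbf 1_{\{u>-s\}}(dd^cu)^m\wedge\beta^{n-m}=\mathbf 1_{\{u>-s\}}(dd^c\max(u,-s))^m\wedge\beta^{n-m}$), the fact that bounded $m$-sh functions do not charge $m$-polar sets, and the portmanteau inequality on the open set $\{u<-t\}$ together with admissibility of $\max(u,-s)$ to get $(dd^cu)^m\wedge\beta^{n-m}(\{u<-t\})\leqslant cap_{m,u}(\{u<-t\})$. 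This is self-contained modulo standard facts about the Hessian measure on ${\mathscr E}^m$ (locality and convergence for $\max(u,-s)\downarrow u$), which the paper's citation encapsulates; two small inaccuracies to fix are that $\{u>-s\}$ is not open (only $\{u<-s\}$ is, but the locality identity does not need openness) and that the non-charging of $m$-polar sets by bounded functions follows from the capacity estimate $\int_E(dd^cv)^m\wedge\beta^{n-m}\leqslant\|v\|^m_\infty cap_m(E)$ rather than from quasicontinuity. Third, for the forward direction of (2) you use the local ${\mathscr F}^m$-representative $v$ of $u\in{\mathscr E}^m$ and the competitors $\max(w,v_j)$ together with $cap_{m,v}(\Omega)=\int_\Omega(dd^cv)^m\wedge\beta^{n-m}$ from Theorem C(2), while the paper applies the left inequality of Theorem C(1) to an open ${\mathscr O}\Subset\Omega$ and Proposition 7(4); your version trades one appeal to Theorem C(1) for a hands-on max-approximation, and both are valid.
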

Note that the statement concerning ${\mathscr E}^m(\Omega)$ is due to \cite{Van Nguyen}. On the other hand, observe that if $u\in{\mathscr F}^{m}(\Omega)$ and $u$ is $m$-maximal on $\Omega$ then $cap_{m,u}(\Omega)=\int_\Omega(dd^cu)^m\wedge\beta^{n-m}=0$, therefore by Remark 5.3 in \cite{ref12}, we see that $u$ is identically zero. In particular we recover the well-known inclusion in the complex Hessian setting ${\mathscr F}^{m}(\Omega)\subset{\mathscr N}^{m}(\Omega)=\{u\in{\mathscr E}^{m}(\Omega):\ \widetilde u=0\}$, where $\widetilde u$ is the smallest $m$-maximal $m$-sh majorant of $u$ (see \cite{nguyen} for further properties concerning the class ${\mathscr N}^{m}(\Omega)$).
\begin{proof} (1) Assume that $h_{m,E,u}^\ast\in{\mathscr F}^{m}(\Omega)$ for a given open set $E$ of $\Omega$. It is clear from the right hand side inequality of (2) in Theorem C that $$cap_{m,h_{m,E,u}^\ast}(E)=cap_{m,h_{m,E,u}^\ast}^\ast(E)\leqslant\int_\Omega(dd^ch_{m,E,u}^\ast)^m\wedge\beta^{n-m}<+\infty.$$ Conversely, assume that $u\in{\mathcal {SH}}_m^-(\Omega)$ such that $cap_{m,u}(E)<+\infty$. Thanks to Theorem 3.1 in \cite{lu article}, there exists a sequence $(u_j)_j\in{\mathscr E}_0^m(\Omega)\cap{\mathscr C}(\overline\Omega)$ which decreases to $u$. By Proposition 7  we have ${\mathscr E}_0^m(\Omega)\ni h_{m,E,u_j}^\ast\downarrow h_{m,E,u}^\ast$ on $\Omega$. Then using the left-hand side inequality in (2) of Theorem C together with Proposition 9, we get $$\ds\sup_j\int_\Omega(dd^ch_{m,E,u_j}^\ast)^m\wedge\beta^{n-m}\leqslant \ds\sup_jcap_{m,u_j}^\ast(E)=\ds\sup_j cap_{m,u_j}(E)=cap_{m,u}(E)<+\infty.$$
This means in particular that $h_{m,E,u}^\ast\in{\mathscr F}^m(\Omega)$. If $E=\Omega$, we just remark that $h_{m,E,u}= u$ on $\Omega$, which implies that $h_{m,E,u}^\ast = u$ on $\Omega$ because $h_{m,E,u}^\ast=h_{m,E,u}$ a.e on $\Omega$. In particular, if $cap_{m,u}(\Omega)<+\infty$ and $\ds\lim_{s\l +\infty}cap_{m,u}(\{u< -s\})=0$ then we deduce from Corollary 1 that $\ds\lim_{s\l +\infty}s^mcap_{m}(\{u< -s\})=0$ and therefore by Corollary 3.2 in \cite{Van Nguyen} we have $u\in{\mathscr F}^{m,a}(\Omega)$.\\
(2) Assume that $u\in{\mathscr E}^m(\Omega)$ and take an open set ${\mathscr O}$ so that $\mathscr O\Subset\Omega$. From the left hand side inequality of (1) in Theorem C and the fact that $h_{m,\mathscr O,u}^\ast\in{\mathscr F}^{m}(\Omega)$, we have
$$cap_{m,u}(\mathscr O)\leqslant \int_{\Omega}(dd^{c}h_{m,\mathscr O,u}^{\ast})^{m}\wedge\beta^{n-m}<+\infty.$$
Conversely, let $u\in{\mathcal {SH}}_m^-(\Omega)$ such that $cap_{m,u}(K)<+\infty$ for all compact sets $K$ of $\Omega$. Consider an open set $E$ such that $E\Subset\Omega$. By assumption we have $cap_{m,u}(E)<+\infty$ then the first statement yields $h_{m,E,u}^\ast\in{\mathscr F}^m(\Omega)$. Since $h_{m,E,u}^\ast=u$ on $E$ it follows that $u\in{\mathscr E}^{m}(\Omega)$.
\end{proof}
From Corollary 2 if $u\in{\mathscr F}^{m}(\Omega)$ then its total weighted $m$-capacity relatively to $u$, is finite and coincides with the total Hessian measure of $u$. As an application, we consider maximality of measures and minimality of $m$-subharmonic functions by means of the terminology of weighted $m$-capacity. Following \cite{nguyen.v} a measure $\mu$ is said $m$-subharmonically greater than another measure $\nu$ (we write $\mu\succcurlyeq\nu$) if $\int_\Omega -\varphi d\mu\geqslant \int_\Omega -\varphi d\nu$, $\forall \varphi\in{\mathscr E}_0^m(\Omega)\cap{\mathscr C}(\overline\Omega)$. Also a finite measure $\mu$ is said to be maximal if for any measure $\nu$ satisfying $\nu(\Omega)=\mu(\Omega)$, the relation $\nu\succcurlyeq\mu$ implies that $\nu=\mu$. Based on Definition 10 in \cite{nguyen.v} and by Theorem C, a function $u\in{\mathscr F}^m(\Omega)$ is said to be minimal if for any function $v\in{\mathscr F}^m(\Omega)$ the conditions $cap_{m,u}(\Omega)=cap_{m,v}(\Omega)$ and $v\leqslant u$ imply $u=v$.
\begin{cor}\
\begin{enumerate}
\item Let $u\in{\mathscr F}^m(\Omega)$ be such that $(dd^cu)^m\wedge\beta^{n-m}$ is a maximal measure. Then $u$ is minimal.
\item Let $(u_j)_j$ be a sequence in ${\mathscr F}^m(\Omega)$ such that $u_j\downarrow u$ and $\sup_{j}cap_{m,u_j}(\Omega)<+\infty$. Then $u\in{\mathscr F}^m(\Omega)$ and $cap_{m,u_j}(\Omega)\uparrow cap_{m,u}(\Omega)$.
\end{enumerate}
\end{cor}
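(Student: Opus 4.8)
The two statements are essentially the weighted $m$-capacity reformulations of the maximality/convergence results already available for Hessian measures in the Cegrell class ${\mathscr F}^m(\Omega)$, so the strategy is to translate everything into the language of Theorem C and Corollary 2 and then invoke the known statements from \cite{nguyen.v} and \cite{ref12}. For part (1), by Corollary 2 we have the identity $cap_{m,w}(\Omega)=\int_\Omega(dd^cw)^m\wedge\beta^{n-m}$ for every $w\in{\mathscr F}^m(\Omega)$. Hence, if $v\in{\mathscr F}^m(\Omega)$ satisfies $cap_{m,u}(\Omega)=cap_{m,v}(\Omega)$ and $v\leqslant u$, this is exactly $\int_\Omega(dd^cv)^m\wedge\beta^{n-m}=\int_\Omega(dd^cu)^m\wedge\beta^{n-m}=:\mu(\Omega)$ together with $v\leqslant u$. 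The plan is then to check that $v\leqslant u$ forces $(dd^cv)^m\wedge\beta^{n-m}\succcurlyeq(dd^cu)^m\wedge\beta^{n-m}$ in the sense of \cite{nguyen.v}: indeed for $\varphi\in{\mathscr E}_0^m(\Omega)\cap{\mathscr C}(\overline\Omega)$ one writes $\int_\Omega(-\varphi)(dd^cv)^m\wedge\beta^{n-m}-\int_\Omega(-\varphi)(dd^cu)^m\wedge\beta^{n-m}$ as a sum of terms $\int_\Omega(-\varphi)\,dd^c(v-u)\wedge(dd^cv)^k\wedge(dd^cu)^{m-1-k}\wedge\beta^{n-m}$ and integrates by parts twice, moving $dd^c$ onto $-\varphi$; since $v-u\leqslant0$, $-\varphi\leqslant0$ is $m$-sh, and the remaining factors form an $m$-positive current, each term is $\geqslant0$. (One should first reduce to $u,v\in{\mathscr E}_0^m(\Omega)\cap{\mathscr C}(\overline\Omega)$ or use the approximation in Theorem 3.1 of \cite{lu article}, exactly as in the proof of Corollary 1, to justify the integration by parts; the boundary terms vanish because the approximants lie in ${\mathscr E}_0^m$.) With $(dd^cv)^m\wedge\beta^{n-m}\succcurlyeq(dd^cu)^m\wedge\beta^{n-m}$ and equal total masses, maximality of $(dd^cu)^m\wedge\beta^{n-m}$ gives $(dd^cv)^m\wedge\beta^{n-m}=(dd^cu)^m\wedge\beta^{n-m}$; then the comparison principle in ${\mathscr F}^m(\Omega)$ (equality of Hessian measures plus $v\leqslant u$, cf. the domination principle in \cite{ref12}) yields $u=v$, which is minimality.

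For part (2), the finiteness of $\sup_j cap_{m,u_j}(\Omega)$ is by Corollary 2 the same as $\sup_j\int_\Omega(dd^cu_j)^m\wedge\beta^{n-m}<+\infty$; since $u_j\downarrow u$ with $u_j\in{\mathscr F}^m(\Omega)$, the definition of ${\mathscr F}^m(\Omega)$ (each $u_j$ is itself a decreasing limit of functions in ${\mathscr E}_0^m(\Omega)$ with uniformly bounded Hessian mass, and a diagonal extraction) immediately gives $u\in{\mathscr F}^m(\Omega)$. It then remains to prove $cap_{m,u_j}(\Omega)\uparrow cap_{m,u}(\Omega)$, i.e. $\int_\Omega(dd^cu_j)^m\wedge\beta^{n-m}\uparrow\int_\Omega(dd^cu)^m\wedge\beta^{n-m}$. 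Monotonicity in $j$ (the integrals decrease since $u_j$ decreases, by the comparison of Hessian masses for decreasing sequences in ${\mathscr F}^m$, Proposition 5.2 in \cite{ref12}) gives a limit $\geqslant\int_\Omega(dd^cu)^m\wedge\beta^{n-m}$; the reverse inequality is the weak convergence $(dd^cu_j)^m\wedge\beta^{n-m}\rightharpoonup(dd^cu)^m\wedge\beta^{n-m}$ on the whole of $\Omega$ together with lower semicontinuity of mass, which is precisely the content of Theorem 3.9 in \cite{ref12} recalled in the first item of Remark 2. Actually since the masses are decreasing and the weak limit has mass $\int_\Omega(dd^cu)^m\wedge\beta^{n-m}$, no mass escapes and one gets equality in the limit; combining with monotonicity of $cap_{m,u_j}$ in $u_j$ (Remark 3, item (3)) gives the asserted increasing convergence.

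The main obstacle is the clean justification of the integration-by-parts/telescoping argument in part (1) establishing $(dd^cv)^m\wedge\beta^{n-m}\succcurlyeq(dd^cu)^m\wedge\beta^{n-m}$ from $v\leqslant u$ within ${\mathscr F}^m(\Omega)$: one must handle the potentially unbounded functions by first approximating from above by functions in ${\mathscr E}_0^m(\Omega)\cap{\mathscr C}(\overline\Omega)$, verify that the boundary terms in Stokes' formula vanish for these approximants, pass to the limit using Proposition 5 together with the convergence results of Section 3 (or directly Theorem 3.9 of \cite{ref12}), and only then conclude. Everything else — rewriting capacities as masses via Corollary 2, applying the definition of maximal measures and of the order $\succcurlyeq$ from \cite{nguyen.v}, and invoking the domination principle — is bookkeeping. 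For part (2) the only nontrivial input is the ``no mass loss'' for decreasing sequences in ${\mathscr F}^m(\Omega)$, which is available as Theorem 3.9 in \cite{ref12}.
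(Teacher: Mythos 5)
Your part (1) is essentially the paper's own argument: the paper passes from $cap_{m,u}(\Omega)=cap_{m,v}(\Omega)$ to equality of total Hessian masses via Theorem C, quotes Proposition 5.2 of \cite{ref12} both for the ordering $(dd^cv)^m\wedge\beta^{n-m}\succcurlyeq(dd^cu)^m\wedge\beta^{n-m}$ (from $v\leqslant u$) and for the final implication ``equal Hessian measures plus $v\leqslant u$ implies $u=v$''; your integration-by-parts sketch merely re-proves the cited ordering, and you correctly identified the equal-total-mass requirement in the definition of maximality.

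In part (2) there are two concrete gaps. First, the ``diagonal extraction'' is not an argument: a diagonal sequence taken from the approximating sequences of the individual $u_j$ need not be decreasing, so it does not exhibit $u$ as a decreasing limit of ${\mathscr E}_0^m(\Omega)$ functions with bounded masses. The paper's fix is the max trick: take $w_j\in{\mathscr E}_0^m(\Omega)\cap{\mathscr C}(\overline\Omega)$ with $w_j\downarrow u$ (Theorem 3.1 in \cite{lu article}) and set $v_j=\max(w_j,u_j)\in{\mathscr E}_0^m(\Omega)$; then $v_j\downarrow u$ and, since $v_j\geqslant u_j$ and the weighted capacity is decreasing in the weight, $\int_\Omega(dd^cv_j)^m\wedge\beta^{n-m}=cap_{m,v_j}(\Omega)\leqslant cap_{m,u_j}(\Omega)$ is uniformly bounded, whence $u\in{\mathscr F}^m(\Omega)$. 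Second, your monotonicity step is stated backwards: since $u_j$ decreases, $u\leqslant u_j$ and Proposition 5.2 of \cite{ref12} gives that the masses $\int_\Omega(dd^cu_j)^m\wedge\beta^{n-m}$ \emph{increase} and are each $\leqslant\int_\Omega(dd^cu)^m\wedge\beta^{n-m}$, so monotonicity yields the limit $\leqslant$, not $\geqslant$; it is the weak convergence together with lower semicontinuity of mass on the open set $\Omega$ that supplies the reverse inequality. Moreover Remark 2(1)/Theorem 3.9 of \cite{ref12} concerns \emph{increasing} sequences, so it is the wrong citation here; you need continuity of the Hessian operator along decreasing sequences in ${\mathscr E}^m(\Omega)$, or you can do what the paper does and simply apply Proposition 9 (its hypothesis $\lim_{s\to+\infty}cap_m(\{u<-s\})=0$ follows from Corollary 1 once $u\in{\mathscr F}^m(\Omega)$), with Theorem C converting mass convergence into capacity convergence. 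With these repairs your route for (2) works and differs from the paper only in proving the mass convergence directly instead of quoting Proposition 9.
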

Corollary 3 is due to \cite{nguyen.v} for the particular case when $cap_{m,u_j}(\Omega)=cap_{m,u_{j+1}}(\Omega)$. However, the requirement $cap_{m,u_j}(\Omega)$ is uniformly finite in (2) cannot be replaced by the weaker assumption that $cap_{m,u_j}(\Omega)$ if finite for each $j$. Indeed, take $u_j=h_{m,{\mathscr B}_0(1-1/j ),{\mathscr B}_0(1)}^\ast$. By Corollary 1 in \cite{ref20} combined with the comments given after Theorem C, we have
$$cap_{m,u_j}(\mathscr B_0(1))=cap_m\left(\mathscr B_0(1-1/j ),\mathscr B_0(1)\right)=\frac{2^n(n-m)^m}{n!m^m\left(\left(1-\frac{1}{j} \right)^{2(1-\frac{n}{m})}-1\right)^m}<+\infty,\quad\forall j.$$
It is clear that $u_j\downarrow -1\not\in{\mathscr F}^m(\mathscr B_0(1))$ (see Proposition 4.5 in \cite{nguyen}) and $cap_{m,u_j}(\Omega)\uparrow +\infty$.
 \begin{proof} (1) Let $v\in{\mathscr F}^m(\Omega)$ such that $cap_{m,u}(\Omega)=cap_{m,v}(\Omega)$ and $v\leqslant u$. By Proposition 5.2 in \cite{ref12}, we see that $(dd^cv)^m\wedge\beta^{n-m}\succcurlyeq(dd^cu)^m\wedge\beta^{n-m}$ then by the maximality hypothesis we have the equality  $(dd^cv)^m\wedge\beta^{n-m}=(dd^cu)^m\wedge\beta^{n-m}$. Applying again Proposition 5.2 in \cite{ref12} to deduce the desired statement.\\
(2) In spite of Theorem 3.1 in \cite{lu article} take a sequence $(w_j)_j\subset{\mathscr E}_0^m(\Omega)\cap{\mathscr C}(\overline\Omega)$ such that $w_j\downarrow u$. For any $j$, $v_j=\max(w_j,u_j)\in{\mathscr E}_0^m(\Omega)$ and clearly $v_j\downarrow u$. Since the weighted $m$-capacity is monotone decreasing with respect to the weight and by Theorem C, we find that
$$\ds\sup_j\int_\Omega (dd^cv_j)^m\wedge\beta^{n-m}=\ds\sup_jcap_{m,v_j}(\Omega)\leqslant \ds\sup_jcap_{m,u_j}(\Omega)<+\infty.$$It follows that $u\in{\mathscr F}^m(\Omega)$ and hence Theorem C implies that $cap_{m,u}(\Omega)<+\infty$. Thus, Proposition 9 completes the proof.  \end{proof}
Now, we are going to prove Theorem C. For this aim, we need the following result which is the corresponding one of Proposition 5.1 of Cegrell \cite {ref7} in the complex Hessian setting:
\begin{pro}
Let $u_p\in {\mathscr F}^{m}(\Omega)$ and $u^j_{p}\in{\mathscr E}_{0}^{m}(\Omega)$ such that $ u^{j}_{p}\downarrow u_{p},\  1\leqslant p\leqslant m $. For every  $h\in {\cal {SH}}_{m}^{-}(\Omega)$, we have
 $$ \lim\limits_{j\rightarrow +\infty} \int_{\Omega}\ds h dd^{c}u_{1}^{j}\wedge...\wedge dd^{c}u_{m}^{j}\wedge\beta^{n-m}=  \int_{\Omega}\ds h dd^{c}u_{1}\wedge...\wedge dd^{c}u_{m}\wedge\beta^{n-m}. $$
\end{pro}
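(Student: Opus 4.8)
The plan is to carry over to the complex Hessian setting Cegrell's proof of Proposition~5.1 in \cite{ref7}, using the convergence results for the Hessian operator on the Cegrell classes recalled in Section~2 and in \cite{ref12}, \cite{ref13}, and Theorem~1 above to handle the unbounded weight~$h$.

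First I would record two preliminary facts about $\mu_{j}:=dd^{c}u_{1}^{j}\wedge\cdots\wedge dd^{c}u_{m}^{j}\wedge\beta^{n-m}$ and $\mu:=dd^{c}u_{1}\wedge\cdots\wedge dd^{c}u_{m}\wedge\beta^{n-m}$. \emph{(i)} $\mu_{j}\to\mu$ weakly: this is the monotone convergence of the Hessian operator on ${\mathscr F}^{m}(\Omega)$ from \cite{ref12}, obtained by substituting the sequences $(u_{p}^{j})_{j}$ one slot at a time (each substitution reducing, after a truncation $\max(\cdot,-N)$ and property~(3) of the Cegrell classes which keeps the total masses bounded, to the standard bounded-coefficient case, and then letting $N\to+\infty$). \emph{(ii)} $\sup_{j}\mu_{j}(\Omega)<+\infty$, by the mixed Hessian inequality $\mu_{j}(\Omega)\leqslant\prod_{p=1}^{m}\big(\int_{\Omega}(dd^{c}u_{p}^{j})^{m}\wedge\beta^{n-m}\big)^{1/m}$, whose right-hand side is uniformly bounded because every $u_{p}\in{\mathscr F}^{m}(\Omega)$. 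In particular $\int_{\Omega}h\,d\mu\in[-\infty,0]$, so the statement is meaningful.

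Next I would dispose of the upper bound $\limsup_{j}\int_{\Omega}h\,d\mu_{j}\leqslant\int_{\Omega}h\,d\mu$, which comes essentially for free. Put $h^{(k)}:=\max(h,-k)$, so $h\leqslant h^{(k)}\leqslant 0$, $h^{(k)}\downarrow h$ and $-h^{(k)}$ is lower semicontinuous and nonnegative; testing the weakly convergent $\mu_{j}$ against $-h^{(k)}$ gives $\liminf_{j}\int_{\Omega}(-h^{(k)})\,d\mu_{j}\geqslant\int_{\Omega}(-h^{(k)})\,d\mu$, i.e. $\limsup_{j}\int_{\Omega}h^{(k)}\,d\mu_{j}\leqslant\int_{\Omega}h^{(k)}\,d\mu$; since $h\leqslant h^{(k)}$, we get $\limsup_{j}\int_{\Omega}h\,d\mu_{j}\leqslant\int_{\Omega}h^{(k)}\,d\mu$, and letting $k\to+\infty$ (monotone convergence, $\mu$ finite) proves the claim. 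This already settles the case $\int_{\Omega}h\,d\mu=-\infty$, so from now on we may assume $h\in L^{1}(\mu)$; then $\mu(\{h=-\infty\})=0$ and $\int_{\Omega}(h^{(k)}-h)\,d\mu\to 0$ as $k\to+\infty$. Writing $\int_{\Omega}h\,d\mu_{j}=\int_{\Omega}h^{(k)}\,d\mu_{j}-\int_{\Omega}(h^{(k)}-h)\,d\mu_{j}$, the remaining inequality $\liminf_{j}\int_{\Omega}h\,d\mu_{j}\geqslant\int_{\Omega}h\,d\mu$ follows by a routine $\varepsilon/3$ argument from two things: \emph{(a)} for each fixed $k$, $\int_{\Omega}h^{(k)}\,d\mu_{j}\to\int_{\Omega}h^{(k)}\,d\mu$ as $j\to+\infty$, and \emph{(b)} the \textbf{uniform integrability estimate} $\sup_{j}\int_{\Omega}(h^{(k)}-h)\,d\mu_{j}\to 0$ as $k\to+\infty$.

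I expect \emph{(b)} to be the main obstacle; it is also where the hypothesis $u_{p}\in{\mathscr F}^{m}(\Omega)$ (and not merely $u_{p}\in{\mathscr E}^{m}(\Omega)$) is used in an essential way. The naive bound $\mu_{j}(\{h<-k\})\leqslant(\max_{p}\|u_{p}^{j}\|_{\infty})^{m}\,cap_{m}(\{h<-k\})$ is useless since $\|u_{p}^{j}\|_{\infty}\to+\infty$, and the only uniform control available is on the total masses, by (ii). The remedy, following \cite{ref7} and in the spirit of the integration by parts of \cite{me-el}, is: since $u_{1}^{j}\in{\mathscr E}_{0}^{m}(\Omega)$ has vanishing boundary values and $R_{j}:=dd^{c}u_{2}^{j}\wedge\cdots\wedge dd^{c}u_{m}^{j}\wedge\beta^{n-m}$ is closed and $m$-positive, integration by parts yields
\[
\int_{\Omega}\big(h^{(k)}-h\big)\,dd^{c}u_{1}^{j}\wedge R_{j}=\int_{\Omega}(-u_{1}^{j})\,dd^{c}\big(h-h^{(k)}\big)\wedge R_{j}\leqslant\int_{\overline{\{h<-k\}}}(-u_{1}^{j})\,dd^{c}h\wedge R_{j},
\]
the last step because $dd^{c}h^{(k)}$ is $m$-positive and $h-h^{(k)}$ vanishes on $\{h\geqslant-k\}$, so $dd^{c}(h-h^{(k)})$ is carried by $\overline{\{h<-k\}}$. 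Iterating over the remaining slots, symmetrising, and invoking the Cegrell comparison inequalities on ${\mathscr F}^{m}(\Omega)$ (\cite{ref12}), the $m$-hyperconvexity of $\Omega$ and the uniform total mass bound (ii), one then shows that the right-hand side tends to $0$ uniformly in $j$ as $k\to+\infty$. For \emph{(a)}, one combines the quasicontinuity of the bounded $m$-sh function $h^{(k)}$ with respect to $cap_{m}$ (\cite{ref9}) with the same comparison inequalities and with Theorem~1, applied to the locally uniformly bounded truncations $\max(u_{p}^{j},-N)\downarrow\max(u_{p},-N)$, and finally lets $N\to+\infty$. Together these complete the proof.
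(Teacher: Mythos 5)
The paper offers no written proof of this proposition at all: it is introduced as ``the corresponding one of Proposition 5.1 of Cegrell \cite{ref7} in the complex Hessian setting'', so the intended argument is Cegrell's, which is a double-monotonicity argument rather than the weak-convergence/uniform-integrability scheme you set up. Your preliminary facts (i)--(ii) and the inequality $\limsup_j\int_\Omega h\,d\mu_j\leqslant\int_\Omega h\,d\mu$, obtained by testing the weakly convergent $\mu_j$ against the bounded, nonnegative, lower semicontinuous functions $-h^{(k)}$ and then letting $k\to+\infty$, are correct, and they do settle the case $\int_\Omega h\,d\mu=-\infty$.

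The problem is that the entire content of the proposition is then concentrated in your claims \emph{(a)} and \emph{(b)}, and neither is actually established. For \emph{(b)}: the measure $(-u_1^j)\,dd^ch\wedge R_j$ whose restriction to $\overline{\{h<-k\}}$ you must make small uniformly in $j$ has total mass $\int_\Omega(-u_1^j)\,dd^ch\wedge R_j=\int_\Omega(-h)\,d\mu_j$ (by the very integration by parts you invoke) --- that is, exactly the quantity whose behaviour in $j$ is in question; no independent uniform bound on it is produced, so the step is circular, and even granting such a bound nothing forces the restriction to $\overline{\{h<-k\}}$ to tend to $0$ uniformly in $j$, since these closed sets need not shrink to an $m$-polar set and the measures move with $j$. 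For \emph{(a)}: the quasicontinuity decomposition requires $\sup_j\mu_j(G)$ to be small when $cap_m(G)$ is small, and this is false for Hessian measures of potentials that are not uniformly bounded --- $\mu_j$ can concentrate its (uniformly bounded) mass near an $m$-polar set on which the bounded function $h^{(k)}$ is discontinuous; appealing to Theorem~1 for the truncations $\max(u_p^j,-N)$ only displaces the same interchange-of-limits difficulty to $N\to+\infty$. Cegrell's route avoids both issues: for $h\in{\mathscr E}_0^m(\Omega)$ one shows by integration by parts (no boundary terms in the Cegrell classes, see \cite{ref12}) that $j\mapsto\int_\Omega h\,d\mu_j$ is \emph{decreasing}, and identifies its limit as $\int_\Omega h\,d\mu$; for general $h\in{\cal {SH}}_m^-(\Omega)$ one then takes $h^s\in{\mathscr E}_0^m(\Omega)$ with $h^s\downarrow h$ (Theorem 3.1 in \cite{lu article}) and interchanges the two decreasing limits in $j$ and $s$, monotone convergence for the finite measures $\mu_j$ and $\mu$ finishing the argument. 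As written, your lower bound $\liminf_j\int_\Omega h\,d\mu_j\geqslant\int_\Omega h\,d\mu$ is not proved, and I would rebuild the second half of the argument along the monotonicity lines above.
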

\begin{proofof}{\it Theorem C.} (1) Assume firstly that $u\in {\mathscr E}^{m}_{0}(\Omega)$, then in view of properties of Cegrell classes, for any subset $E\subset\Omega$ since $h_{m,E,u}^{\ast}=\max(h_{m,E,u}^{\ast},u)$, we get $h_{m,E,u}^{\ast}\in{\mathscr E}^m_0(\Omega)$. Assume that $E$ is open such that $E\Subset\Omega$ and let $\{ E_{j}\}_{j \geqslant 1} $ be an exhaustion increasing sequence of compacts subsets of $E$. We have
$$cap_{m,u}(E)=\lim\limits_{j\rightarrow +\infty} cap_{m,u}(E_{j})=\lim\limits_{j\rightarrow +\infty}\int_{\Omega}(dd^{c}h_{m,E_{j},u}^{\ast})^{m}\wedge\beta^{n-m} =\int_\Omega (dd^{c}h^{\ast}_{m,E,u})^{m}\wedge\beta^{n-m} .$$ The second equality is due to \cite{Van Nguyen} because $E_{j}$ is compact while the later one is a consequence of Proposition 11 since $ { \mathscr E_{0}}^{m}(\Omega)\ni h^{\ast}_{m,E_j,u} \downarrow h_{m,E,u}^{\ast}\in {\mathscr F}^{m}(\Omega)$. Assume now that $u\in {\mathscr E}^{m}(\Omega)$, then by Theorem 3.1 in \cite{lu article}, there exists a monotone decreasing sequence $ (u_{j})_j\subset {\mathscr E_{0}}^{m}(\Omega)\cap {\mathscr C}(\overline{\Omega}) $ that decreases to $u$. Thus for each $u_j$, we have
$$cap_{m,u_j}(E)=\int_{\Omega}(dd^{c}h_{m,E,u_j}^{\ast})^{m}\wedge\beta^{n-m}.$$ Thanks to Proposition 7, we have $h_{m,E,u_j}^{\ast}\downarrow h_{m,E,u}^{\ast}\in{\mathscr F}^{m}(\Omega)$. Moreover, by \cite{Van Nguyen} $cap_{m,u}(E)$ is finite, then we apply together Proposition 11 and Proposition 9 when we let $j\l +\infty$ to get
$$cap_{m,u}\left({\ds\mathop  E^\circ}\right)= cap_{m,u}(E)=\int_{\Omega}(dd^{c}h_{m,E,u}^{\ast})^{m}\wedge\beta^{n-m}.$$
Also $cap_{m,u}(E)\leqslant cap_{m,u}(\overline E)$ by monotonicity of $cap_{m,u}$. In order to complete the estimates of (1) for an open subset $E$ of $ \Omega$ such that $E\Subset\Omega$, we just observe that if $K$ is compact then $cap_{m,u}(K)\leqslant cap_{m,u}^\ast(K)$. In fact take an open set ${\mathscr O}$ such that $K\subset{\mathscr O}\Subset\Omega$. So, since $h_{m,K,u}^\ast\geqslant h_{m,{\mathscr O},u}^\ast$ Proposition 5.2 in \cite{ref12} and the preceding argument imply that $$cap_{m,u}(K)=\int_\Omega(dd^ch_{m,K,u}^\ast)^m\wedge\beta^{n-m}\leqslant \int_\Omega(dd^ch_{m,{\mathscr O},u}^\ast)^m\wedge\beta^{n-m}=cap_{m,u}({\mathscr O}).$$
We complete this observation by taking the infimum over all open set ${\mathscr O}\supset K$. Let's now finish the proof and assume that $E$ is a subset satisfying $E\Subset \Omega$ and $u\in{\mathscr E}^{m}(\Omega)$. Observe that $h_{m,{\ds\mathop  E^\circ},u}^\ast\geqslant h_{m,E,u}^\ast\geqslant h_{m,\overline E,u}^\ast$ and by Proposition 7 all these functions are in ${\mathscr F}^{m}(\Omega)$. Once again Proposition 5.2 in \cite{ref12} gives
$$cap_{m,u}\left({\ds\mathop  E^\circ}\right)=\int_{\Omega}\ds (dd^{c}h^{\ast}_{m,{\ds\mathop  E^\circ},u})^{m}\wedge\beta^{n-m}\leqslant \int_{\Omega}\ds (dd^{c}h_{m,E,u}^{\ast})^{m}\wedge\beta^{n-m}\leqslant\ds\int_\Omega (dd^{c}h_{m,\overline E,u}^{\ast})^{m}=cap_{m,u}(\overline E).$$
(2) Let ${\mathscr O}$ be an open set containing $E$ and let $u\in{\mathscr F}^m(\Omega).$ Take a bounded $m$-sh function $v$ on $\Omega$ such that $u\leqslant v\leqslant 0$. Then $\max(u,v)=v\in{\mathscr F}^m(\Omega)$ and therefore by Proposition 5.2 in \cite{ref12}, we have
$$\int_{\mathscr O}(dd^cv)^m\wedge\beta^{n-m}\leqslant\int_\Omega(dd^cv)^m\wedge\beta^{n-m}\leqslant \int_\Omega(dd^cu)^m\wedge\beta^{n-m}.$$
By taking the spremum over $v$ satisfying $u\leqslant v\leqslant 0$, we deduce that
$$cap_{m,u}^\ast(E)\leqslant cap_{m,u}({\mathscr O})\leqslant \int_\Omega(dd^cu)^m\wedge\beta^{n-m}.$$
In order to prove the first inequality in (2) we will take a sequence $(u_j)_j\subset{\mathscr E}_0^m(\Omega)$ such that $u_j\downarrow u$. Observe that $ h_{m,{\mathscr O},u_j}^\ast\leqslant h_{m,E,u_j}^\ast$, for each $j$, then once again Proposition 5.2 in \cite{ref12} implies
$$\int_\Omega(dd^ch_{m,E,u_j}^\ast)^m\wedge\beta^{n-m}\leqslant \int_\Omega(dd^ch_{m,{\mathscr O},u_j}^\ast)^m\wedge\beta^{n-m}=cap_{m,u_j}({\mathscr O})\leqslant cap_{m,u}({\mathscr O}),$$
where the later inequality is obvious because $u\leqslant u_j$. Next, in spite of Proposition 7, we have ${\mathscr E}_0^m(\Omega)\ni h_{m,E,u_j}^\ast\downarrow h_{m,E,u}^\ast\in{\mathscr F}^m(\Omega)$. Thus, a direct application of Proposition 11 gives
$$\int_\Omega(dd^ch_{m,E,u}^\ast)^m\wedge\beta^{n-m}=\ds\lim_{j\l +\infty}\int_\Omega(dd^ch_{m,E,u_j}^\ast)^m\wedge\beta^{n-m}\leqslant cap_{m,u}({\mathscr O}).$$
The required inequality follows by taking the infimum over all open set ${\mathscr O}\supset E$.
\end{proofof}
\begin{rem} From the proof of Theorem C, we have $cap_{m,u}^\ast(E)=\int_{\Omega}(dd^{c}h_{m,E,u}^{\ast})^{m}\wedge\beta^{n-m}$ when $u\in{\mathscr E}^m(\Omega)$ and $E$ is open such that $E\Subset\Omega$. We mention here that such equality was proved by \cite{ref15} (see Theorem 4.1) when $m=n$ and $E\subset\Omega$ is Borel. Unfortunately, the arguments used in their proof are not exact. Indeed, they used Proposition  5.1 of \cite{ref7} without the key assumption that the sequence is decreases as well as the sequence $h_{G_j,u}^\ast$ (as constructed in the proof) is not necessarly increasing to $h_{E,u}^\ast$ on $\Omega$. In complex hessian setting a natural question arises: is the above equality true for  $u\in{\mathscr E}^m(\Omega)$ and $E\Subset\Omega$ similarly with the standard weight $u\equiv -1$?
\end{rem}
\subsection{Sadullaev weighted $m$-capacity} In \cite{ref17},  Sadullaev and Abdullaev introduced the Sadullaev $m$-capacity ${\mathscr P}_{m,\rho}$ in the complex Hessian  case and they compared it with the $m$-capacity $cap_m$. This is a generalization of the study of \cite{ref18} in the complex setting $m=n$. In this part, we introduce the weighted $m$-capacity of Sadullaev $ {\mathscr P}_{m,u,\rho} $ and we establish a relationship with the outer weighted $m$-capacity $cap_{m,u}^\ast,$ provided that the weight $u\in{\mathscr E}^{m}(\Omega)$. Firstly, we state.
\begin{defn}{[\textbf{strongly $m$-pseudoconvex domain}]} Let $ \Omega$ be an open subset of $\cb^{n}$ with ${\mathscr C}^{2}$ boundary. We say that  $\Omega$ is strongly $m$-pseudoconvex if there exists a defining function $\Omega$ (i.e, a $ {\mathscr C}^2$-function $\rho$ such that $\Omega=\{\rho<0\}, \ \rho=0$ and $d\rho\not=0$ on $\partial\Omega$) and there exists $C>0 $ such that at every point of $\Omega$, we have
$(dd^c\rho)^k\wedge\beta^{n-k}\geqslant C\beta^n,\  {\rm for} \ 1\leqslant k\leqslant m.$
\end{defn}
 Now we present the weighted $m$-capacity of Sadullaev ${\mathscr P}_{m,u,\rho}$ as
$${\mathscr P}_{m,u,\rho}(E)=\int_{\Omega}-h_{m,E,u}^{\ast} (dd^{c}\rho)^{m}\wedge\beta^{n-m},$$
for every subset $E\subset\Omega,$ where $\rho$ is the defining function of a strongly $m$-pseudoconvex  domain $\Omega$ and $u\in{\cal {SH}}_m^-(\Omega)$. It is clear that ${\mathscr P}_{m,u,\rho}(E)<+\infty$ if and only if $h_{m,E,u}^{\ast}\in L^1\left((dd^{c}\rho)^{m}\wedge\beta^{n-m}\right)$.  In the special case $u=-1$ we get ${\mathscr P}_{m,-1,\rho}(E)={\mathscr P}_{m,\rho}(E)=\int_{\Omega} -h^{\ast}_{m,E,\Omega}(dd^{c}\rho)^{m}\wedge \beta^{n-m}$. By an argument go back to Sadullaev \cite{ref18} and by investigating a potential theory related to the $m$-capacity, the authors proved in \cite{ref17} that there exist $c_1,c_2>0$ such that $c_1cap_{m}^\ast(E)\leqslant {\mathscr P}_{m,\rho}(E)\leqslant c_2(cap_{m}^\ast(E))^{\frac{1}{m}},$ for every $E\Subset\Omega$, where $cap_m^\ast$ is the standard outer $m$-capacity. Our second main comparison in this section states that:
\vskip0.15cm
\noindent{\bf Theorem D.} {\it Assume that $\Omega$ is a bounded strongly $m$-pseudoconvex domain of $\cb^{n}$ and $u\in{\mathscr E}^{m}(\Omega).$  Then, for every $E\Subset\Omega$  we have
\begin{enumerate}
\item If $\int_{\Omega}u(dd^{c}\rho)^{m}\wedge\beta^{n-m}>-\infty$ then $${\mathscr P}_{m,u,\rho}(E)\leqslant\ds\max_{\Omega}(-\rho)\left[\int_{\Omega}(dd^{c}\rho)^{m}\wedge\beta^{n-m}\right]^{\frac{m-1}{m}} \left(cap_{m,u}^\ast(E)\right)^{\frac{1}{m}}.$$
\item If $E\subset \{\rho<r\}$ for $r<0$ and $\gamma_r=\max\limits_{\{\rho<r\}}\left(-h_{m,E,u}^\ast\right)$ then $$cap_{m,u}^\ast\left({\ds\mathop  E^\circ}\right)\leqslant \frac{m!}{|r|^{m}}\gamma_r^{m-1}{\mathscr P}_{m,u,\rho}(E).$$
\end{enumerate}}
 Theorem D can be seen as a generalization of a result obtained by \cite{ref17} for a weight $u$ identically $-1$ but with the subset $E$ instead of its interior in (2) and with a stronger factor than the one given in \cite{ref17}. Concerning estimate (1), if we assume in addition that $u\leqslant -1$ on $E$, then we get $h_{m,E,\Omega}^\ast\geqslant h_{m,E,u}^\ast$, or equivalently saying that the standard Sadullaev $m$-weighted ${\mathscr P}_{m,\rho}(E)$ can be estimated from above by the weighted $m$-capacity $cap_{m,u}^\ast(E)$.
\begin{proof} (1) By hypothesis and the fact that $u\leqslant h_{m,E,u}^\ast$, we have ${\cal P}_{m,u,\rho}(E)<+\infty$.  Without loss of generality, we may assume that $-1\leqslant\rho\leqslant 0$, then $\rho\in{\mathscr F}^{m}(\Omega)$. Observe also that if ${\mathscr O}$ is an open set satisfying $E\subset{\mathscr O}\Subset\Omega$ then $ h_{m,{\mathscr O},u}^\ast\leqslant h_{m,E,u}^\ast$ and the two functions are in ${\mathcal F}^{m}(\Omega)$. So, an integration by parts on ${\mathscr F}^{m}(\Omega)$  combined with Remark 4 and Proposition  3.2 in \cite{ref12}, yield
$$\begin{array}{lcl}{\cal P}_{m,u,\rho}(E)&=&
\ds\int_{\Omega} -h_{m,E,u}^{\ast} (dd^{c}\rho)^{m}\wedge\beta^{n-m}\\&\leqslant&\ds\int_{\Omega} -h_{m,{\mathscr O},u}^{\ast} (dd^{c}\rho)^{m}\wedge\beta^{n-m}\\&=&\ds\int_{\Omega} -\rho  dd^{c}h_{m,{\mathscr O},u}^{\ast}\wedge (dd^{c}\rho)^{m-1}\wedge \beta^{n-m}\\&\leqslant&\left[\ds\int_{\Omega}-\rho(dd^{c}\rho)^{m}\wedge \beta^{n-m}\right]^{\frac{m-1}{m}}\left[\ds\int_{\Omega}-\rho(dd^{c}h_{m,{\mathscr O},u}^{\ast})^{m}\wedge \beta^{n-m}\right]^{\frac{1}{m}}\\&\leqslant&\ds\max_{\Omega}(-\rho)\left[\ds\int_{\Omega}(dd^{c}\rho)^{m}\wedge\beta^{n-m}\right]^{\frac{m-1}{m}}\left[\ds\int_{\Omega}  (dd^{c}h_{m,{\mathscr O},u}^{\ast})^{m}\wedge \beta^{n-m}\right]^{\frac{1}{m}}= M\left[cap_{m,u}({\mathscr O})\right]^{\frac{1}{m}}, \end{array}$$ where $M={\ds\max_{\Omega}}(-\rho) \left[\int_{\Omega}(dd^{c}\rho)^{m}\wedge\beta^{n-m}\right]^{\frac{m-1}{m}}.$ We complete the proof of (1) by taking the infimum over all
 open set ${\mathscr O}\supset E$. \\
(2) In order to prove the desired inequality, we claim firstly that
If $\sigma=\min\limits_{\Omega} \rho$ and $v\in{\mathscr F}^m(\Omega)$ then for $ \sigma< r < 0$ and $1\leqslant k\leqslant m$ we have
$$
\int_{\sigma}^{r} dt \int_{\{\rho\leqslant t\}} (dd^{c}\rho)^{m-k}\wedge (dd^{c}v)^{k}\wedge \beta^{n-m}\leqslant \max\limits_{\{\rho<r\}}(-v)\int_{\{\rho\leqslant r\}}(dd^{c}\rho)^{m-k+1}\wedge (dd^{c}v)^{k-1}\wedge \beta^{n-m}.
$$
Indeed we may assume that $\max\limits_{\{\rho<r\}}\left(-v\right)<+\infty$, otherwise there is nothing to prove. Suppose also that $v$ is smooth on $\Omega$ then thanks to Stokes and Fubini theorems, we  have
$$\begin{array}{lcl}
 \ds\int_{\sigma}^{r} dt \int_{\{\rho< t\}} (dd^{c}\rho)^{m-k}\wedge (dd^{c}v)^{k}\wedge \beta^{n-m}&=&\ds\int_{\{\rho< r\}} d\rho\wedge d^{c}v\wedge (dd^{c}\rho)^{m-k}\wedge (dd^{c}v)^{k-1}\wedge\beta^{n-m}\\&=&\ds\int_{\{\rho=r\}}v d^{c}\rho\wedge (dd^{c}\rho)^{m-k}\wedge (dd^{c}v)^{k-1}\wedge\beta^{n-m}\\&-&\ds\int_{\{\rho< r\}} v (dd^{c}\rho)^{m-k+1}\wedge (dd^{c}v)^{k-1}\wedge\beta^{n-m}\\&\leqslant&\max\limits_{\{\rho<r\}}\left(-v\right)\ds\int_{\{\rho\leqslant r\}}  (dd^{c}\rho)^{m-k+1}\wedge (dd^{c}v)^{k-1}\wedge\beta^{n-m}. \end{array}$$
The last inequality because $v$ is negative and for any $1\leqslant k\leqslant m$, $d^{c}\rho\wedge (dd^{c}\rho)^{m-k}\wedge (dd^{c}v)^{k-1}\wedge\beta^{n-m}$ define a positive measure on $\{\rho=r\}$ (we argue exactly as in \cite{ref19}). When $v\in{\mathscr F}^m(\Omega)$, we take a sequence $(v_j)_j$ of $m$-sh functions which is smooth and monotone decreasing to $v$ in an open neighbourhood of $\{\rho\leqslant r\}$. Then $$\int_{\sigma}^{r}dt\int_{\{\rho<t\}} (dd^{c}\rho)^{m-k}\wedge (dd^{c}v_j)^{k}\wedge\beta^{n-m}\leqslant \max\limits_{\{\rho<r\}}(-v_j)\int_{\{\rho\leqslant r \}} (dd^{c}\rho)^{m-k+1}\wedge (dd^{c}v_j)^{k-1}\wedge\beta^{n-m}.$$ The proof of the claim was completed by observing that $\max\limits_{\{\rho<r\}}(-v_j)\leqslant \max\limits_{\{\rho<r\}}(-v)$ and by using Lemma 1.2.1 in \cite{ref13} and monotone convergence theorem when we let $j\rightarrow +\infty$. We are going now to finish the proof of the inequality in (2). To this aim let $r<0$ so that $E\subset \{\rho< r\}.$ Hence applying $(m-1)$-times the preceding claim for $v\equiv h_{m,E,u}^{\ast}\in{\mathscr F}^m(\Omega)$ (see Proposition 7), we get
 $$\begin{array}{lcl}& &\ds\int_{\sigma}^{r} dt_{1}\int_{\sigma}^{t_{1}}dt_{2}\cdots\int_{\sigma}^{t_{m-1}} dt_{m}\int_{\{\rho\leqslant t_{m}\}}(dd^{c}h_{m,E,u}^{\ast})^{m}\wedge\beta^{n-m}\leqslant\\&\leqslant&\gamma_r^{m-1}\ds\int_{\sigma}^{r}dt_{1}\int_{\{\rho\leqslant t_{1}\}}dd^{c}h_{m,E,u}^{\ast}\wedge (dd^{c}\rho)^{m-1}\wedge \beta^{n-m}.\end{array}$$ It is obvious that the left hand side integral is bounded from below by  $$\frac{|r|^{m}}{m!}\int_{\{\rho\leqslant r\}} (dd^{c}h_{m,E,u}^{\ast})^{m}\wedge \beta^{n-m}. $$ We may suppose that ${\mathscr P}_{m,u,\rho}(E)<+\infty$ and $\gamma_r<+\infty$ otherwise there is nothing to prove in (2). So, concerning the right hand integral, as in the proof of the claim, Fubini theorem yields
 $$\begin{array}{lcl}\ds\int_{\sigma}^{r}dt_{1}\int_{\{\rho\leqslant t_{1}\}}dd^{c}h_{m,E,u}^{\ast}\wedge (dd^{c}\rho)^{m-1}\wedge \beta^{n-m}&\leqslant&{\mathscr P}_{m,u,\rho}(E)+\ds\int_{\{\rho=r\}}h_{m,E,u}^{\ast}d^c\rho\wedge (dd^{c}\rho)^{m-1}\wedge \beta^{n-m}\\&\leqslant&{\mathscr P}_{m,u,\rho}(E)\end{array}$$ because  $h_{m,E,u}^\ast\leqslant 0$ and $d^c\rho\wedge (dd^{c}\rho)^{m-1}\wedge \beta^{n-m}$ define a positive measure on $\{\rho=r\}$ (see \cite{ref19}). Finally, thanks to Proposition 7 the measure $(dd^{c}h_{m,E,u}^{\ast})^{m}\wedge\beta^{n-m}$ is supported by $\overline E\subset\{\rho\leqslant r\}$, then in view of Theorem C, we deduce that
$$ cap_{m,u}^{\ast}\left({\ds\mathop  E^\circ}\right)=cap_{m,u}\left({\ds\mathop  E^\circ}\right)\leqslant\int_{\{\rho\leqslant r\}}(dd^{c}h_{m,E,u}^{\ast})^{m}\wedge\beta^{n-m} \leqslant \gamma_r^{m-1}\frac{m!}{|r|^{m}} {\mathscr P}_{m,u,\rho}(E).$$\end{proof}
\section{$(m,T)$-Complex Hessian equation}
Assume that $\mu$ be a positive measure on a bounded open subset $\Omega$ of $\cb^n$ and let $ T\in{\mathscr C}_p^m(\Omega)$, $m\geqslant  p+1$.  Inspired by an argument due to Xing \cite{ref21}, we investigate the following complex Hessian equation associated to $T$: $ (dd^c v)^{m-p}\wedge\beta^{n-m}\wedge T =\mu$. It is worth pointing out that the case $T=1$ leads to the famous complex Hessian equation which was the subject of many papers the later fiveteen years, see for example \cite{ref4},\cite{Den},\cite{ref12},\cite{ref13}, etc. We start with the following Lemma, which is an easy adaptation of the proof of Corollary 7.3 in \cite{ref2} (see \cite{ref13}):
\begin{lem}
Let $(u_{j})_j\subset{\cal {SH}}_{m}(\Omega)$ be locally uniformly bounded from above, $u=\limsup u_{j}$ Then $u^\star\in{\cal {SH}}_{m}(\Omega)$ and $\{u<u^\star \}$ is $m$-polar.
\end{lem}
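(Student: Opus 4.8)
The plan is to adapt the classical Bedford--Taylor argument (Corollary~7.3 in \cite{ref2}) to the $m$-subharmonic setting. Since every $m$-sh function is upper semicontinuous, $u^\star$ is automatically u.s.c., so the real content is to show $u^\star\in{\cal {SH}}_m(\Omega)$ and that $\{u<u^\star\}$ is $m$-polar. First I would set, for $N\geqslant 1$, $v_N=\sup_{j\geqslant N}u_j$ and $w_N=v_N^\star$ its u.s.c. regularization. By Choquet's lemma one may replace $v_N$ by the supremum of a countable subfamily having the same regularization; that countable supremum is an increasing limit of functions $\max(u_{j_1},\dots,u_{j_K})\in{\cal {SH}}_m(\Omega)$ (Proposition~1(4)), so by standard potential theory (the analogue of Proposition~1(5) for monotone, locally bounded above families) we get $w_N\in{\cal {SH}}_m(\Omega)$. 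The sequence $(w_N)_N$ is non-increasing; put $w=\lim_N w_N$, which is $m$-sh as a decreasing limit of $m$-sh functions (the local uniform boundedness from above rules out the degenerate case $w\equiv-\infty$ on a component). Since $u=\inf_N v_N\leqslant v_N\leqslant w_N$ for all $N$, we have $u\leqslant w$, hence $u^\star\leqslant w^\star=w$ because $w$ is u.s.c.

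The crucial step is to prove that the set $\{u<w\}$ is $m$-polar, and this is where the main obstacle lies: it rests on the $m$-subharmonic counterpart of the Bedford--Taylor theorem that negligible sets are $m$-polar (the $m$-sh version of Theorem~7.1 in \cite{ref2}), which I would quote from the literature. Granting this, for each $N$ the set $E_N=\{v_N<v_N^\star\}=\{v_N<w_N\}$ is a negligible set --- it is exhibited by the family $\{u_j\}_{j\geqslant N}$ --- hence $m$-polar; and since a countable union of $m$-polar sets is $m$-polar (this follows from the subadditivity of $cap_{m,T}$ in Proposition~3(4), or directly from the description of $m$-polar sets as $\{-\infty\}$-sets of $m$-sh functions), the set $E=\bigcup_N E_N$ is $m$-polar. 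For $z\notin E$ one has $v_N(z)=w_N(z)$ for every $N$, so $u(z)=\lim_N v_N(z)=\lim_N w_N(z)=w(z)$; therefore $\{u<w\}\subset E$ is $m$-polar.

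It remains to upgrade $u^\star\leqslant w$ to the equality $u^\star=w$ on all of $\Omega$. Fixing $z_0\in\Omega$ and using that $u^\star$ is u.s.c., that $u^\star\geqslant u=w$ off the Lebesgue-null set $\{u<w\}$, and that removing a null set cannot decrease the $\limsup$ at $z_0$ of the subharmonic function $w$ (by the sub-mean-value inequality), I would write
$$u^\star(z_0)\ \geqslant\ \limsup_{z\to z_0,\ z\notin\{u<w\}}u^\star(z)\ \geqslant\ \limsup_{z\to z_0,\ z\notin\{u<w\}}w(z)\ =\ w(z_0).$$
Together with $u^\star\leqslant w$ this gives $u^\star=w\in{\cal {SH}}_m(\Omega)$, whence $\{u<u^\star\}=\{u<w\}$ is $m$-polar, completing the argument. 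Apart from this last equality and the routine facts about monotone limits and capacities, the only genuinely nontrivial ingredient is the ``negligible $\Rightarrow$ $m$-polar'' theorem.
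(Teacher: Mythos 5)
Your proof is correct and follows essentially the route the paper intends: the paper offers no argument of its own beyond noting that the lemma is ``an easy adaptation of the proof of Corollary 7.3 in \cite{ref2} (see \cite{ref13})'', and that Bedford--Taylor scheme --- regularized suprema $v_N^\star$ via Choquet, the negligible-sets-are-$m$-polar theorem, countable unions of $m$-polar sets, and the sub-mean-value argument identifying $u^\star$ with $\lim_N v_N^\star$ --- is exactly what you carry out. (Only your parenthetical that local uniform boundedness from above rules out $w\equiv-\infty$ is inaccurate, e.g.\ $u_j\equiv -j$, but that degenerate case is implicitly excluded by the statement itself, as in Bedford--Taylor.)
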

Denotes by $\|\mu\|_E$ the mass on $E$ of the total variation of a signed measure $\mu$. We also  get the following Xing type inequality which is true in the complex Hessian context.
\begin{lem}
Let $ \Omega$ be a bounded open subset of $ \cb^{n} $, $ T\in{\mathscr C}_p^m(\Omega)$ and $ u,v\in {\cal {SH}}_{m}(\Omega)\cap L^{\infty}_{loc}(\Omega) $ such that $ \lim\limits_{\xi\rightarrow\partial\Omega\cap\Supp T}\sup|u(\xi)-v(\xi)|=0. $ Then, $ \forall\delta>0 $ and $ 0<k<1,$ we have
$$ cap_{m,T}\left(|u-v|\geqslant\delta\right)\leqslant {(m-p)!\over
(1-k)^{m-p}\delta^{m-p}} \| (dd^c u)^{m-p}\wedge\beta^{n-m}\wedge T - (dd^c
v)^{m-p}\wedge\beta^{n-m}\wedge T\|_{\{|u-v|>k\delta \}}.$$
In particular, if
$(dd^c u)^{m-p}\wedge\beta^{n-m}\wedge T=(dd^c v)^{m-p}\wedge\beta^{n-m}\wedge T$ then $u=v\ cap_{m,T}$-a.e.
\end{lem}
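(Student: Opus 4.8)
The plan is to transpose Xing's argument from \cite{ref21} to the weighted setting. For $f\in{\cal {SH}}_m(\Omega)\cap L^\infty_{loc}(\Omega)$ write $\mu_f:=(dd^cf)^{m-p}\wedge\beta^{n-m}\wedge T$. By Proposition 3 (1), $cap_{m,T}(\{|u-v|\geqslant\delta\})=\sup\{\int_{\{|u-v|\geqslant\delta\}}\mu_w:\ w\in{\cal {SH}}_m(\Omega),\ -1\leqslant w\leqslant0\}$, so it is enough to bound $\int_{\{|u-v|\geqslant\delta\}}\mu_w$ for a fixed such $w$. Since $\{|u-v|\geqslant\delta\}$ is the disjoint union of $\{v-u\geqslant\delta\}$ and $\{u-v\geqslant\delta\}$ — over which the total variation is additive — and $u,v$ play symmetric roles, I would reduce to the one-sided estimate
$$\bigl((1-k)\delta\bigr)^{m-p}\int_{\{v-u\geqslant\delta\}}\mu_w\ \leqslant\ \|\mu_u-\mu_v\|_{\{v-u>k\delta\}},$$
whose constant $1$ is even sharper than the $(m-p)!/((1-k)\delta)^{m-p}$ asked for (the factorial is comfortable slack).

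The device is an auxiliary function of Xing type. Fix $\delta'\in(k\delta,\delta)$, put $k':=k\delta/\delta'\in(k,1)$, and set $\psi:=\max\bigl(u,\ v+(1-k')\delta'\,w-k'\delta'\bigr)\in{\cal {SH}}_m(\Omega)\cap L^\infty_{loc}(\Omega)$. Using $w\leqslant0$ and $\lim_{\xi\to\partial\Omega\cap\Supp T}\sup|u-v|=0$ one checks that the second argument stays $<u$ in a neighbourhood of $\partial\Omega\cap\Supp T$, hence $\psi=u$ there; also $\{v-u\geqslant\delta\}\subseteq\{v-u>\delta'\}\subseteq\{\psi>u\}$, while $\{\psi>u\}=\{v-u>k'\delta'-(1-k')\delta'w\}\subseteq\{v-u>k'\delta'\}=\{v-u>k\delta\}$. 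On the open set $\{\psi>u\}$ one has $\psi=v+(1-k')\delta'w-k'\delta'$ locally, so $dd^c\psi=dd^cv+(1-k')\delta'\,dd^cw$; expanding $\mu_\psi$ by the Newton binomial formula and keeping only the first and last terms (every term is a nonnegative measure, by the $m$-positivity of the Hessian operator relative to $T$ recalled before Proposition 2) yields $\mu_\psi\geqslant\mu_v+\bigl((1-k')\delta'\bigr)^{m-p}\mu_w$ on $\{\psi>u\}$. Since moreover $u\leqslant\psi$ with $u=\psi$ near $\partial\Omega\cap\Supp T$, the comparison principle relative to $T$ — which I would derive from Lemma 2 by the standard manipulations (the ``generalized Xing-type comparison principle'' of the introduction) — gives $\int_{\{\psi>u\}}\mu_\psi\leqslant\int_{\{\psi>u\}}\mu_u$. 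Combining these,
$$\bigl((1-k')\delta'\bigr)^{m-p}\!\int_{\{v-u\geqslant\delta\}}\!\!\mu_w\ \leqslant\ \bigl((1-k')\delta'\bigr)^{m-p}\!\int_{\{\psi>u\}}\!\!\mu_w\ \leqslant\ \int_{\{\psi>u\}}(\mu_u-\mu_v)\ \leqslant\ \|\mu_u-\mu_v\|_{\{\psi>u\}}\ \leqslant\ \|\mu_u-\mu_v\|_{\{v-u>k\delta\}},$$
and letting $\delta'\uparrow\delta$ (so $k'\downarrow k$ and $(1-k')\delta'\uparrow(1-k)\delta$) gives the one-sided estimate. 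Adding it to its mirror image for $\{u-v\geqslant\delta\}$ (the two threshold sets being disjoint, so the total variations add) and taking the supremum over $w$ yields the capacity inequality.

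For the last assertion: if $(dd^cu)^{m-p}\wedge\beta^{n-m}\wedge T=(dd^cv)^{m-p}\wedge\beta^{n-m}\wedge T$ then the right-hand side vanishes, so $cap_{m,T}(\{|u-v|\geqslant1/j\})=0$ for every $j\geqslant1$; since $\{u\neq v\}=\bigcup_j\{|u-v|\geqslant1/j\}$, countable subadditivity (Proposition 3 (4)) gives $cap_{m,T}(\{u\neq v\})=0$, i.e. $u=v$ $cap_{m,T}$-a.e.

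The step I expect to be the crux is the comparison principle relative to $T$: the hypothesis only pins $u=\psi$ near $\partial\Omega\cap\Supp T$, not near all of $\partial\Omega$, and $u,v$ are merely locally bounded, so the integration by parts over $\{\psi>u\}$ must be carried out with care — regularising $\psi$, using that every current in sight carries the factor $T$ (so any boundary contribution is concentrated on $\partial\Omega\cap\Supp T$, where $\psi=u$, which also makes all the integrals above finite), and then passing to the limit, exactly in the spirit of the proof of Lemma 2. Granting that, the remaining ingredients — the $\max$-construction, the binomial expansion, and discarding the mixed terms — are routine bookkeeping.
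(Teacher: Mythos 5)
Your route is not the paper's. The paper fixes $w$ with values in $[0,1]$, observes the elementary inclusion $\{|u-v|\geqslant\delta\}\subset\{|u-v\pm k\delta|\geqslant(1-k)\delta\}$, and then applies, on each of the two sets $\{u+k\delta<v\}$ and $\{v+k\delta<u\}$, a ready-made Xing-type inequality relative to $T$ (Lemma 3 of \cite{ref9}), which directly bounds $\int(v-u-k\delta)^{m-p}(dd^cw)^{m-p}\wedge\beta^{n-m}\wedge T$ by $(m-p)!$ times the difference of the two Hessian measures weighted by $(1-w)$; the total-variation bound then finishes the proof in a few lines. You instead redo a Xing-style argument from scratch: the max construction $\psi=\max(u,v+(1-k')\delta'w-k'\delta')$, the set inclusions, and the binomial expansion are all correct (and your constant is indeed sharper than the stated one, and the splitting into the two disjoint one-sided sets plus additivity of the total variation is fine).

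The gap is exactly at the step you flag as the crux, and your proposed fix does not work as described. The inequality $\int_{\{\psi>u\}}\mu_\psi\leqslant\int_{\{\psi>u\}}\mu_u$ is a set-restricted comparison principle relative to $T$; it cannot be obtained from Lemma 2 ``by standard manipulations'', because Lemma 2 is a weighted integration-by-parts inequality over all of $\Omega$ that swaps one factor $dd^cu_0$ for $dd^cu_1$ --- it says nothing about restricting the measures to $\{\psi>u\}$. The usual proof of the restricted comparison principle needs two further nontrivial ingredients in the $(m,T)$-setting: (i) the locality of the Hessian operator through a maximum on the (non-open) set $\{\psi>u\}$, i.e. $\1_{\{h>u\}}(dd^c\max(u,h))^{m-p}\wedge\beta^{n-m}\wedge T=\1_{\{h>u\}}(dd^ch)^{m-p}\wedge\beta^{n-m}\wedge T$, which rests on quasicontinuity with respect to $cap_{m,T}$ and the convergence theorems of \cite{ref9}, and (ii) equality of total masses when $u=\psi$ only near $\partial\Omega\cap\Supp T$ (this is where the factor $T$ localizes the boundary terms). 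Both are available in \cite{ref9} --- indeed they are the substance of the very Lemma 3 the paper invokes --- so your argument can be completed by citing that machinery rather than Lemma 2; as written, however, the decisive inequality is asserted, not proved. The final assertion ($u=v$ $cap_{m,T}$-a.e.\ via countable subadditivity) is correct and matches the paper.
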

\begin{rem}\ \begin{enumerate}
\item When $T=1$, we recover Corollary 1.3.15 of Lu \cite{ref13} as well as the estimate obtained by Xing \cite{ref21} when $T=1$ and $m=n$.
\item Suppose that $0\in\Omega$ and $L$ is a complex subspace of codimension $p$ in $\cb^n$. Consider the current of integration $T=[L]$ and assume that $ \lim\limits_{\xi\rightarrow\partial\Omega\cap L}\sup|u(\xi)-v(\xi)|=0$ and $(dd^c (i^{\star} u))^{m-p}\wedge (i^{\star}\beta)^{n-m}=(dd^c (i^{\star}v))^{m-p}\wedge (i^{\star}\beta)^{n-m}$, where $i ; L\cap\Omega\hookrightarrow\Omega$ is the injection map. In view of Remark 4  in \cite{ref9} and as a consequence of Lemma 4, we get $u_{| \Omega\cap L}=v_{| \Omega\cap L}$. We recover then Corollary 1.3.15 in \cite{ref13} for the particular case $L=\cb^n$.
\item Assume that $ u_j,u\in {\cal {SH}}_{m}(\Omega)\cap L^{\infty}_{loc}(\Omega) $ such that $ \lim\limits_{\xi\rightarrow\partial\Omega\cap\Supp T}\sup|u_j(\xi)-u(\xi)|=0 $ uniformly in $j$ and
$\| (dd^c u_j)^{m-p}\wedge\beta^{n-m}\wedge T - (dd^c
u)^{m-p}\wedge\beta^{n-m}\wedge T\|_E\l 0$ for all $E\Subset\Omega$. Then, it is clear from Lemma 4 that $u_j$ converges to $u$ in capacity $cap_{m,T}$ on $\Omega$.
\end{enumerate}
\end{rem}
\begin{proof}
Let $w\in{\cal {SH}}_m(\Omega,[0,1]),\delta >0$ and $k\in{}]0,1[$. Thanks to Lemma 3  in \cite{ref9} and the fact that $\{|u-v|\geqslant \delta \}\subset \{|u-v\pm\delta k|\geqslant (1-k)\delta \}$, one get:
$$\begin{array}{lcl}& &\ds\int_{\{|u-v|\geqslant \delta \}} (dd^c w)^{m-p}\wedge\beta^{n-m}\wedge T\leqslant\\ &\leqslant&\ds{1\over (1-k)^{m-p}
\delta^{m-p}}\ds\int_{\{u+\delta\leqslant v\}}(v-u-k\delta )^{m-p}(dd^c w)^{m-p}\wedge\beta^{n-m}\wedge T \\ &+&\ds{1\over
(1-k)^{m-p}\delta^{m-p}}\ds\int_{\{v+\delta\leqslant u\}}(u-v-k\delta
)^{m-p}(dd^c w)^{m-p}\wedge\beta^{n-m}\wedge T\hfill\cr &\leqslant&\ds{1\over
(1-k)^{m-p}\delta^{m-p}}\ds\int_{\{u+k\delta <v\}}(v-u-k\delta
)^{m-p}(dd^c w)^{m-p}\wedge\beta^{n-m}\wedge T \\&+&\ds{1\over
(1-k)^{m-p}\delta^{m-p}}\ds\int_{\{v+k\delta <u\}}(u-v-k\delta
)^{m-p}(dd^c w)^{m-p}\wedge\beta^{n-m}\wedge T\\&\leqslant&\ds{(m-p)!\over
(1-k)^{m-p}\delta^{m-p}}\ds\int_{\{|u-v|>k\delta \}}
(1-w)\left(\chi_{\{u+k\delta <v\}}-\chi_ {\{v+k\delta <u\}}\right)(dd^c u)^{m-p}\wedge\beta^{n-m}\wedge T\\&-&\ds{(m-p)!\over
(1-k)^{m-p}\delta^{m-p}}\ds\int_{\{|u-v|>k\delta \}}
(1-w)\left(\chi_{\{u+k\delta <v\}}-\chi_ {\{v+k\delta <u\}}\right)(dd^c v)^{m-p}\wedge\beta^{n-m}\wedge T\\&\leqslant&\ds{(m-p)!\over (1-k)^{m-p}\delta^{m-p}}
\|(dd^c u)^{m-p}\wedge\beta^{n-m}\wedge T-(dd^c v)^{m-p}\wedge\beta^{n-m}\wedge T\|_{\{|u-v|>k\delta \}}.\end{array}
$$
The proof was completed by arbitrariness of $w$.
\end{proof}
As we mention at the beginning of this section, we deal with the complex Hessian equation associated to a given $m$-positive closed current $T$. Since $T$  may have too strong singularities, we direct ourselves to the subclass ${\mathscr A}_m^p(\Omega)$ of currents $R\in{\mathscr C}_p^m(\Omega)$ such that every $R$-negligible subset (i.e, vanishing with respect to the trace measure $\sigma_R$) is Lebesgue negligible. Note that ${\mathscr A}_m^p(\Omega)$ contains at least currents like $(dd^cv)^p$, where $v$ is a ${\mathcal C}^2$-strictly $m$-sh functions i.e. $v$ is of classe ${\mathcal C}^2$ and for every $\varphi\in{\mathscr D}(\Omega)$ there is $\varepsilon>0$ such that $v+\varepsilon\varphi$ is $m$-sh. Now, we are ready to state our main subsolution Theorem in this section.
\vskip0.15cm
\noindent{\bf Theorem E.} {\it Let $\Omega$ be an open bounded subset of $\mathbb{C}^n$, $T\in {\mathscr A}_p^m(\Omega)$, $m\geqslant p+1$ and $\mu$ is a positive measure on $\Omega$ satisfying:
\begin{enumerate}
\item There exists $v\in{\cal {SH}}_{m}(\Omega)\cap L^{\infty }(\Omega)$ such that $ (dd^c v)^{m-p}\wedge\beta^{n-m}\wedge T \geqslant \mu.$
\item There exists a sequence of measures $\mu_{j}=(dd^c u_{j})^{m-p}\wedge\beta^{n-m}\wedge T$
such that $\|\mu_{j}-\mu \|_{\Omega}\l 0$  where $u_{j}\in
{\cal {SH}}_{m}(\Omega)\cap {\mathscr C}(\overline {\Omega})\  and \ u_{j}=u_{1}$ on $\partial
\Omega\cap\Supp T$ for every $ $j. Assume moreover that for any $j$, $cap_{m,T}(v_{j}<v_{j}^{\ast})=0,$ where $v_{j}=\sup\{u_{k}, k\geqslant j\}$.
\end{enumerate}
Then there exists $u\in
{\cal {SH}}_{m}(\Omega)\cap L^{\infty }(\Omega)$ such that $\mu=  (dd^c u)^{m-p}\wedge\beta^{n-m}\wedge T.$}
\vskip0.1cm
Theorem E generalizes a result due to Xing \cite{ref21} when $T=1$ and  $m=n$.  According to \cite{subsolution}, when $T=\beta^{p}$ and $\Omega $ is a smoothly strongly pseudoconvex domain, the second hypothesis of Theorem E is superfluous. In fact the author has proved that the complex Hessian equation $ (dd^c v)^{m-p}\wedge\beta^{n-m+p}=\mu$  admits a solution $u\in \cal {SH}_{m-p}(\Omega )\cap L^{\infty}(\Omega)$ provided that it has a subsolution. In our situation, $\Omega$ is only bounded, then together with the second hypothesis, we obtain a more precise solution $u\in \cal {SH}_{m}(\Omega)\cap L^{\infty}(\Omega).$
\begin{proof} The proof follows the same reasoning as in \cite{ref21}. Let $A>0$ such that $\forall z\in \overline {\Omega},$ $A\geqslant |z|$ and select $c>0$ so that $c\geq |v(z)|+|u_{1}(w)|+1$ $\forall z\in \Supp T$ and $\forall w\in \partial\Omega\cap\Supp T$. By applying Lemma 3 in \cite{ref9} for  $r=1, w_{1}=...=w_{m-p}=\frac{|z|^{2}} {A^{2}}$ and in view of hypothesis (1), we have:
$$
\begin{array}{lcl}\ds\int_{\{u_{j}<v-c\}}\left(1-{|z|^{2}\over A^{2}}\right)
 (dd^c u_{j})^{m-p}\wedge\beta^{n-m}\wedge T
&\geqslant&\ds\int_{\{u_{j}<v-c\}}\left(1-{|z|^{2}\over A^{2}}\right) (dd^c v)^{m-p}\wedge\beta^{n-m}\wedge T
\\&+&\ds{1\over (m-p)!A^{2(m-p)}}\ds\int_{\{u_{j}<v-c\}}(v-c-u_{j})^{m-p} \beta^{n-p}\wedge T
\\&\geqslant&\ds\int_{\{u_{j}<v-c\}}\left(1-{|z|^{2}\over A^{2}}\right)d\mu
\\&+&\ds{1\over (m-p)!A^{2(m-p)}}\ds\int_{\{u_{j}<v-c\}}(v-c-u_{j})^{m-p} \beta^{n-p}\wedge T
\end{array}
$$ Moreover, since $\|\mu_{j}-\mu \|_{\Omega}\l 0$ then by Fatou Lemma, we get:
$$
\begin{array}{lcl}0&\geqslant& \ds\frac{1}{(m-p)!A^{2(m-p)}}\ds{\ds\mathop{{\underline{\rm lim}}}_{j\l+\infty}}
\ds\int_{\{u_{j}<v-c\}}(v-c-u_{j})^{m-p} \beta^{n-p}\wedge T\\&\geqslant&\ds\frac{1}{(m-p)!A^{2(m-p)}}\ds\int_{\Omega}\ds{\ds\mathop{{\underline{\rm lim}}}_{j\l+\infty}}
\left(\chi_{\{u_{j}<v-c\}}(v-c-u_{j})^{m-p}\right)\beta^{n-p}\wedge T \\&\geqslant&\ds\frac{1}{(m-p)!A^{2(m-p)}}\ds\int_{\Omega}\chi_{\left\{\ds{\ds\mathop{{\overline{\rm lim}}}_{j\l+\infty}}
u_{j}<v-c\right\}}\left(\ds{\ds\mathop{{\underline{\rm lim}}}_{j\l+\infty}}|v-c-u_{j}|\right)^{m-p}
 \beta^{n-p}\wedge T\\&\geqslant&\ds\frac{1}{(m-p)!A^{2(m-p)}}\ds\int_{\Omega}\chi_{\left\{\ds{\ds\mathop{{\overline{\rm lim}}}_{j\l+\infty}}
u_{j}<v-c\right\}}\left(v-c-\ds{\ds\mathop{{\overline{\rm lim}}}_{j\l+\infty}}u_{j} \right)^{m-p}
 \beta^{n-p}\wedge T.
\end{array}$$
It follows that $\ds\mathop{{\overline{\rm lim}}}_{j\l+\infty}u_{j}> v-c$ for $\sigma_T$-a.e and therefore $\ds{\ds\mathop{{\overline{\rm lim}}}_{j\l+\infty}}u_{j}
\not \equiv -\infty $. Setting $A=\cup_j (v_{j}<v_{j}^{\ast}).$ In spite of Lemma 3, there exists $g\in {\cal {SH}}_{m}(\Omega)$ such that:
$$
v_{j}=v_{j}^{\ast}\downarrow
\ds{\ds\mathop{{\overline{\rm lim}}}_{j\l+\infty}}u_{j}=g\ \ \  {\rm sur}\ \Omega\smallsetminus A.
$$ Moreover, since $A$ is $(m,T)$-pluripolar and $T\in {\mathscr A}_m^p$ then $g\geqslant v-c$ a.e. Hence, $g$ is locally bounded on  $\Omega.$ Now, we are going to prove that $u_{j}$ converges to $g$ in $ cap_{m,T}$ on every $E\Subset\Omega.$ For this aim, let $E\Subset \Omega, $ then for all $ \delta>0,$ we have:
\begin{eqnarray}\label{e1}
&&cap_{m,T}\left(E\cap \{|g-u_{j}|\geq\delta\}\right)\leqslant cap_{m,T}\left(E\cap \{|g-v_{j}|\geqslant {\delta\over 2}
\right) +cap_{m,T}\left(\{|v_{j}-u_{j}|\geq
{\delta\over 2}\}\right).
\end{eqnarray}
Thanks to the quasicontinuity of  $g$ and the Dini's Theorem, it is not hard to see that $v_{j}\downarrow g$ uniformly on $\Omega\smallsetminus {\mathscr O}$ where ${\mathscr O}$ is an open subset of $\Omega $ with arbitrarily small $(m,T)$-capacity. Hence, the first term in the right tends to $0$ when $j\l +\infty.$ For the second one, we show firstly that:
\begin{eqnarray}\label{e2}
B=\left\{|v_{j}-u_{j}|\geqslant \delta/ 2\right\}\subset
\ds\cup_{l=0}^{+\infty }\left\{|u_{j+l+1}-u_{l+j}|\geqslant \delta/{2^{l+j+2}}\right\}.
\end{eqnarray}
Let $z_0\in B,$ $l_0$ such that $|u_{j+l_{0}+1}(z_{0})-u_{j}(z_0)|\geqslant
\delta/4$ and assume that:
$$
z_0\not\in\ds\cup_{l=0}^{l_{0}-1}\left\{|u_{j+l+1}-u_{l+j}|\geq \delta/{2^
{l+j+2}}\right\}.
$$
So,$$
\begin{array}{lcl}|u_{j+l_{0}+1}(z_{0})-u_{l_{0}+j}(z_{0})|
&\geqslant&|u_{j+l_{0}+1}(z_{0})-u_{j}(z_{0})|
-\ds\sum_{l=0}^{l_{0}-1}|u_{j+l+1}(z_{0})-u_{l+j}(z_{0})|\\&\geqslant&\delta/4-\ds\sum_{l=0}^{l_{0}-1}\delta/{2^{l+j+2}}\geqslant
\delta/{ 2^{l_0+j+2}}.\end{array}
$$
It follows that   $z_0 \in \left\{|u_{l_0+j+1}-u_{l_0+j}|\geq \delta/{2^{l_{0}+j+2}}\right\},$ which proves $(\ref{e2})$. Moreover, by passing to an extracted subsequence, which denoted also by $\mu_{j}$, we may assume that for any $j$, we have $\ (m-p)!\|\mu_{j}-\mu\|_{\Omega}\leqslant  2^{-(m-p+1)j}.$ Therefore by applying Lemma 4, we get:
$$
\begin{array}{lcl}cap_{m,T}\left\{|u_{j+1}-u_{j}|>\delta\right\}\leqslant \ds{(m-p)!\over \delta^{m-p}}
\|\mu_{j+1}-\mu_{j}\|_{\Omega}
&\leqslant& \ds{(m-p)!\over \delta^{m-p}} (\|\mu_{j+1}-\mu \|_{\Omega}+\|\mu-
\mu_{j}\|_{\Omega})\\
&\leqslant& \ds{2\over \delta^{m-p}2^{(m-p+1)j}}.\end{array}$$
Consequently,
$$
\begin{array}{lcl}cap_{m,T}\left\{|{\rm sup}\{u_{j},u_{j+1},...\}-u_{j}|\geq \delta/ 2\right\}
&\leqslant&\ds\sum_{l=0}^{+\infty}cap_{m,T}\left\{|u_{l+j+1}-u_{l+j}|\geqslant\delta/{2^{l+j+2}}\right\}\\&\leqslant&\ds\sum_{l=0}^{+\infty}{ 2\ 2^{(m-p)(l+j+2)}\over\delta^{m-p}2^{(m-p+1)(l+j)}}
={4^{m-p+1} \over\delta^{m-p} 2^j}.\end{array}
$$ We see then that the second term in the right  side of $(\ref{e1})$ tends to $0$ when $j\l +\infty.$ Finally, by using Theorem 3 in \cite{ref9}, we see that $(dd^{c}u_{j})^{m-p}\wedge \beta^{n-m}\wedge T$ converges weakly to the measure $(dd^{c}g)^{m-p}\wedge \beta^{n-m}\wedge T$ which completes the proof.
\end{proof}
\section*{Acknowledgments}
 The first named author wishes to thank Professor Ahmed Zeriahi for the hospitality and helpful discussions during her visit to "Institut de math\'ematiques de Toulouse". Also the authors would like to thank the referee for his/her remarks and suggestions which helped to improve the presentation of the paper.

 \section*{Data availability statement}
 Data sharing not applicable to this article as no datasets were generated or analysed during
the current study.

  \end{document}